\documentclass[11pt]{amsart}

\usepackage{amsmath}
\usepackage{amsfonts}
\usepackage{epsf,amssymb,times,overpic,amscd}
\usepackage[usenames]{color}
\usepackage[all]{xy}

\usepackage{graphicx}
\usepackage{epsfig}
\usepackage{epstopdf}

\newtheorem{thm}{Theorem}[section]
\newtheorem{prop}[thm]{Proposition}
\newtheorem{lemma}[thm]{Lemma}
\newtheorem{cor}[thm]{Corollary}

\theoremstyle{definition}
\newtheorem{defn}[thm]{Definition}

\theoremstyle{remark}

\newtheorem{rmk}[thm]{Remark}

\newtheorem{prob}[thm]{Problem}

\newcommand{\Z}{\mathbb{Z}}

\newcommand{\K}{\mathbf{k}}
\newcommand{\pa}{\partial}
\newcommand{\Hom}{\operatorname{Hom}}

\newcommand{\End}{\operatorname{End}}

\newcommand{\Ind}{\operatorname{Ind}}

\newcommand{\Ext}{\operatorname{Ext}}

\newcommand{\n}{\noindent}
\newcommand{\ot}{\otimes}
\newcommand{\od}{\odot}

\newcommand{\ra}{\rightarrow}

\newcommand{\mf}{\mathbf}

\newcommand{\cal}{\mathcal}

\newcommand{\wt}{\widetilde}
\newcommand{\ov}{\overline}

%new command (Yin):

\newcommand{\mb}{\mathbf{1}}
\newcommand{\nh}{\widetilde{NH}{}}
\newcommand{\nhmn}{\widetilde{NH}{}_n^m}

\newcommand{\vzn}{v_n^0}
\newcommand{\nhzn}{\widetilde{NH}{}_n^0}
\newcommand{\la}{\Lambda}
\newcommand{\pak}{\partial_{(k)}}
\newcommand{\cf}{\mathcal{F}}
\newcommand{\cfk}{\mathcal{F}^{(k)}}
\newcommand{\wtfk}{\widetilde{\mathcal{F}}^{(k)}}
\newcommand{\cg}{\mathcal{G}}
\newcommand{\cgf}{\mathcal{G} \otimes \mathcal{F}}
\newcommand{\cfg}{\mathcal{F} \otimes \mathcal{G}}
\newcommand{\wtv}{\widetilde{v}}
\newcommand{\wtu}{\widetilde{u}}
\newcommand{\wte}{\widetilde{e}}

\oddsidemargin0.1in
\evensidemargin0.1in
\textwidth6.0in
\topmargin0.5in
\textheight7.5in

\begin{document}
%%%%%%%%%%%%%%%%%%%%%%%%%%%%%%%%%%%%%%%%%%%%%%%%%%%%%%%%%
\title{Towards functor exponentiation}

\author{Mikhail Khovanov and Yin Tian}
\address{Department of Mathematics, Columbia University, New York, NY 10027}
\email{khovanov@math.columbia.edu}
\address{Yau Mathematical Sciences Center, Tsinghua University, Beijing 100084, China}
\email{ytian@math.tsinghua.edu.cn}

\date{December 6, 2017}
\subjclass[2010]{18D10, 17B37, 16E20.}

\keywords{}

\begin{abstract}
We consider a possible framework to categorify the exponential
map exp(-f) given the categorification of a generator f of $\frak{sl}_2$ by Lauda. In this setup
the Taylor expansions of exp(-f) and exp(f) turn into
complexes built out of categorified divided powers of f.
Hom spaces between tensor powers of categorified f are given
by diagrammatics combining nilHecke algebra relations with
those for a additional "short strand" generator. The proposed
framework is only an approximation to
categorification of exponentiation, because the functors categorifying exp(f) and exp(-f) are not invertible.
\end{abstract}

\maketitle

%%%%%%%%%%%%%%%%%%%%%%%%%%%%%%%%%%%%%%%%%%%%%%%%%%%%%%%%%
\section{Introduction}

The exponential function is fundamental in mathematics.
In Lie theory, the exponential map connects a Lie algebra and its Lie group.
Idempotented version of quantized universal enveloping algebras of simple Lie algebras have been categorified \cite{KL1,KL2,R}.
This paper can be viewed as a small step towards lifting the exponential map to the categorical level.

We focus on the case of $\frak{sl}_2$.
%and restrict to a single nilpotent element.
Consider the expansion
$$\exp(-f)=\sum\limits_{k\ge0}(-1)^k\frac{f^k}{k!},$$
in a completion of the universal enveloping algebra of $\frak{sl}_2$, where $f \in \frak{sl}_2$ is a Chevalley generator of the lower-triangular matrix.
%$\left( \begin{array}{cc} 0 & 0 \\ 1 & 0 \end{array} \right) $.
Categorification of the divided power $f^{(k)}=\frac{f^k}{k!}$
and of its quantized version $\frac{f^k}{[k]!}$ naturally appears in the categorified quantum $\frak{sl}_2$ \cite{La}.
The generator $f$ is lifted to a bimodule $\cal{F}'$ over a direct sum of the nilHecke algebras $\bigoplus\limits_{n\ge0} NH_n$.
The tensor powers $\cal{F}'^k=\cal{F}'^{\otimes k}$ of the bimodule admit a direct sum decomposition $\cal{F}'^{k} \cong \bigoplus\limits_{k!} \cal{F}'^{(k)}$.
It is natural to expect lifting $\exp(-f)$ to a cochain complex whose degree $k$ component is $\cal{F}'^{(k)}$ for $k \ge 0$.
A nontrivial differential is needed to link adjacent components.

The diagrammatic approach is widely used in categorification and plays a significant role in the present paper as well.
We provide a modification $\wt{NH}$ of $\bigoplus\limits_{n\ge0} NH_n$ by adding an extra
generator to the nilHecke algebras
The new generator is described by a short strand which links $NH_n$ and $NH_{n+1}$ together.
The induction $\wt{NH}$-bimodule still exists, denoted $\cal{F}$.
Short strand induces a $\wt{NH}$-bimodule homomorphism $\wt{NH} \ra \cal{F}$.
This morphism and its suitable generalizations $\cal{F}^{k} \ra \cal{F}^{k+1}$ define a differential on $ \bigoplus\limits_{k\ge0}\cal{F}^{(k)}[-k] $.
The resulting complex descends to an alternating sum $\sum\limits_{k\ge0}(-1)^k[\cal{F}^{(k)}]$ in the Grothendieck ring of the derived category of $\wt{NH}$-bimodules.

Due to the existence of the short strand, the extension group $\Ext^1(\cal{F}, \wt{NH})$ of bimodules is nontrivial.
We lift the expansion $\exp(f)=\sum\limits_{k\ge0}f^{(k)}$ to a complex $\left( \bigoplus\limits_{k\ge0}\cal{F}^{(k)}, d \right)$, where the differential $d$ consists of certain elements of $\Ext^1$-groups.
The absence of the alternating sign in the expansion of $\exp(f)$ comes from the fact that it cancels against the sign
coming from the use of $\Ext^1$-groups.

Unfortunately, the two resulting complexes, lifting
$\exp(-f)$ and $\exp(f)$, respectively, are not invertible, as explained  in Section \ref{Sec dis}. A more
elaborate or just a different construction is needed to
more adequately categorify exponentiation.

%\begin{prob} Find a framework for categorification of the exponential map, where an exact endofunctor $\cal{F}$ in a triangulated category $\cal{C}$ lifts to two mutually-inverse exact endofunctors $\exp(\cal{F})$ and $\exp(-\cal{F})$ on some triangulated category $\cal{C}^e$. The endofunctors should descend to maps $\exp([\cal{F}])$ and $\exp(-[\cal{F}])$ on the Grothendieck ring of $\cal{C}^e$, where $[\cal{F}]$ is the map induced by $\cal{F}$ on the Grothendieck group $K_0(\cal{C})$. Grothendieck groups of $\cal{C}$ and $\cal{C}^e$ should be suitably related. \end{prob}

\begin{prob} Find a framework for categorification of the exponential map, where an object $\cal{F}$ in a
monoidal triangulated category $\cal{C}$ lifts to
two invertible objects $\exp(\cal{F})$ and $\exp(-\cal{F})$ in some monoidal triangulated category $\cal{C}^e$.
The objects should descend to $\exp([\cal{F}])$ and $\exp(-[\cal{F}])$ in the Grothendieck ring of $\cal{C}^e$, where
$[\cal{F}]$ is the class of $\cal{F}$ in the Grothendieck ring of $\cal{C}$.
The Grothendieck rings of $\cal{C}$ and $\cal{C}^e$ should be suitably related.
\end{prob}

The recently developed theory of stability in representations of
the symmetric group (see \cite{CEF}, for instance) can be related to a diagrammatical category
similar to the one for $\wt{NH}$. The nilHecke algebra $NH_n$
should be replaced by the group algebra $\mathbb{C}[S_n]$ of the symmetric
group. Adding a short strand with suitable sliding and commutativity
relations will enlarge the direct sum $\oplus_{n\ge 0}\mathbb{C}[S_n]$ to
a non-unital idempotented algebra $\wt{S}$. Alternatively,
this idempotented algebra can be viewed as describing a monoidal
category with a single generating object. One of the first key results
in the stable representation theory of $S_n$ can be restated as
the theorem that the category of finitely-generated right $\wt{S}$
modules is Noetherian.

The first author used endofunctors in the category of finite-dimensional $\oplus_{n\ge 0}\mathbb{C}[S_n]$ modules to categorify the Heisenberg algebra \cite{Kh}.
The second author studied the category of finite dimensional left $\wt{S}$ modules to build a categorical boson-fermion correspondence \cite{T}.
Short strands were also used in diagrammatic categorifications of the polynomial ring in \cite{KS}, and the ring of integers localized at two in \cite{KT}.

One possible application of categorified exponentiation would be the categorification of integral forms of Lie groups and the exponential map between a Lie algebra and its Lie group.
It might also be useful for categorification of Vassiliev invariants, where parameter
$h$ appears as the logarithm of $q$. After categorification $q$ becomes the grading shift, and some sophisticated structure refining the shift functor would
be needed to define its logarithm.

\vspace{0.1in}

\n{\bf Acknowledgments.} M.K. was partially supported by the NSF grants DMS-1406065, DMS-1664255 while working on the paper. Y.T. was partially supported by the NSFC 11601256.

\section{The algebra}

\subsection{The definition of $\nh$}
\begin{defn} \label{def nh}
Define an algebra $\nh$ by generators $1_n$ for $n \ge 0$, $\pa_{i,n}$ for $1 \le i \le n-1$, $x_{i,n}$ for $1 \le i \le n$, and $v_{i,n}$ for $1 \le i \le n$, subject to the relations consisting of three groups:

\n(1) Idempotent relations:
\begin{align*}
1_n 1_m=\delta_{n,m}1_n, &\qquad 1_n \pa_{i,n} = \pa_{i,n} 1_n = \pa_{i,n}, \\
1_n x_{i,n} = x_{i,n} 1_n = x_{i,n}, &\qquad 1_{n-1} v_{i,n} = v_{i,n} 1_n = v_{i,n}.
\end{align*}

\n(2) NilHecke relations:
\begin{align*}
x_{i,n} x_{j,n} = x_{j,n} x_{i,n}, & \\
x_{i,n} \pa_{j,n} = \pa_{j,n} x_{i,n} ~~~\mbox{if}~|i-j|>1, &\qquad  \pa_{i,n} \pa_{j,n} = \pa_{j,n} \pa_{i,n} ~\mbox{if}~~~|i-j|>1, \\
\pa_{i,n} \pa_{i,n} =0, &\qquad \pa_{i,n}\pa_{i+1,n}\pa_{i,n}=\pa_{i+1,n}\pa_{i,n}\pa_{i+1,n}\\
x_{i,n}\pa_{i,n}-\pa_{i,n}x_{i+1,n}=1_n, &\qquad \pa_{i,n}x_{i,n}-x_{i+1,n}\pa_{i,n}=1_n.
\end{align*}

\n(3) Short strands relations:
\begin{align*}
v_{i,n} x_{j,n} = x_{j,n-1} v_{i,n} ~~~\mbox{if}~i > j, &\qquad  v_{i,n} x_{j,n} = x_{j-1,n-1} v_{i,n} ~~~\mbox{if}~i <j, \\
v_{i,n} \pa_{j,n} = \pa_{j,n-1} v_{i,n}, ~~~\mbox{if}~i > j+1, &\qquad  v_{i,n} \pa_{j,n} = \pa_{j-1,n-1} v_{i,n} ~~~\mbox{if}~i <j,  \\
v_{i,n} v_{j,n+1}=v_{j,n} v_{i+1,n+1}~~~\mbox{if}~i \ge j. & \\
v_{i,n} \pa_{i,n} = v_{i+1,n} \pa_{i,n}. & \qquad \mbox{(Exchange relation)}&
\end{align*}
\end{defn}

\begin{figure}[h]
\begin{overpic}
[scale=0.4]{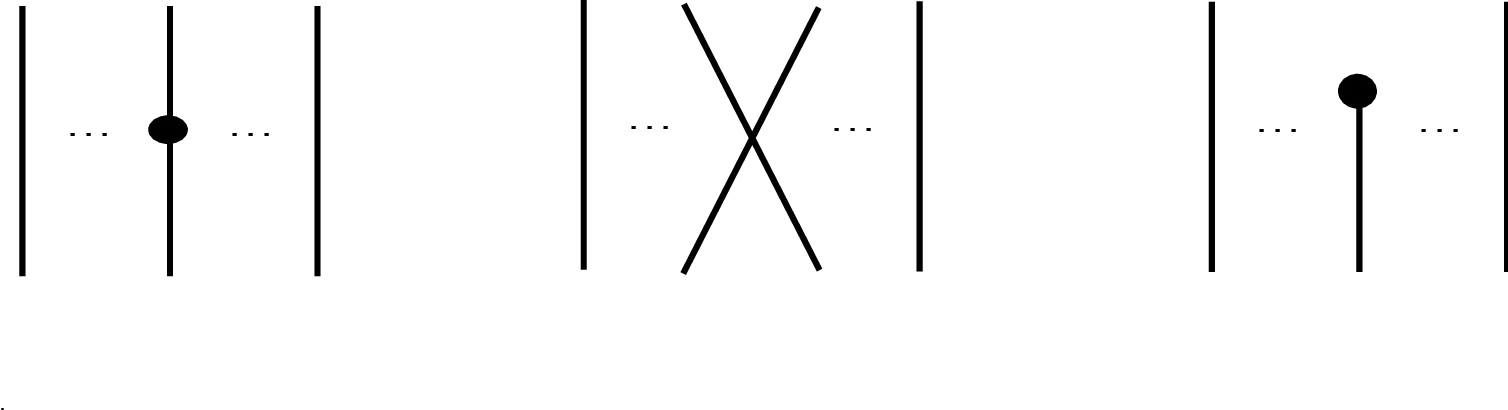}
\put(11,5){$i$}
\put(10,0){$x_{i,n}$}
\put(45,5){$i$}
\put(51,5){$i+1$}
\put(48,0){$\pa_{i,n}$}
\put(89,5){$i$}
\put(88,0){$v_{i,n}$}
\end{overpic}
\caption{Generators of $\nh$.}
\label{fig alg1}
\end{figure}

The algebra $\nh$ can be described diagrammatically.
The idempotent $1_n$ is denoted by $n$ vertical strands.
In particular, $1_0$ is denoted by the empty diagram.
The generator $x_{i,n}$ is denoted by $n$ vertical strands with a dot on the $i$-th strand, and $\pa_{i,n}$ is denoted by $n$ strands with a $(i,i+1)$ crossing.
The new generator $v_{i,n}$ is denoted by a diagram with $n-1$ vertical strands and one short strand in the $i$-th position.
The short strand has no endpoint at the top and one endpoint at the bottom, see figure \ref{fig alg1}.

The product $ab$ of two diagrams $a$ and $b$ is a vertical concatenation of $a$ and $b$, where $a$ is at the top, $b$ is at the bottom.
The product is zero unless the numbers of their endpoints match.

The relations of the second group are the defining relations of the nilHecke algebras.
The relations of the third group are about short strands.
The first three lines are isotopy relations of disjoint diagrams.
The last line says that the short strand is exchangeable between $i$-th and $(i+1)$-th positions when composing with the crossing.
We call it the {\em exchange relation}.
In addition to the isotopy relations of disjoint diagrams, other local relations are drawn in figure \ref{fig alg2}.

Let $a \od b \in \nh$ denote a horizontal concatenation of $a$ and $b$, where $a$ is on the left, $b$ is on the right.
The element $a \od b$ does not depend on the heights of $a$ and $b$ by the isotopy relations of disjoint diagrams.
\begin{figure}[h]
\begin{overpic}
[scale=0.3]{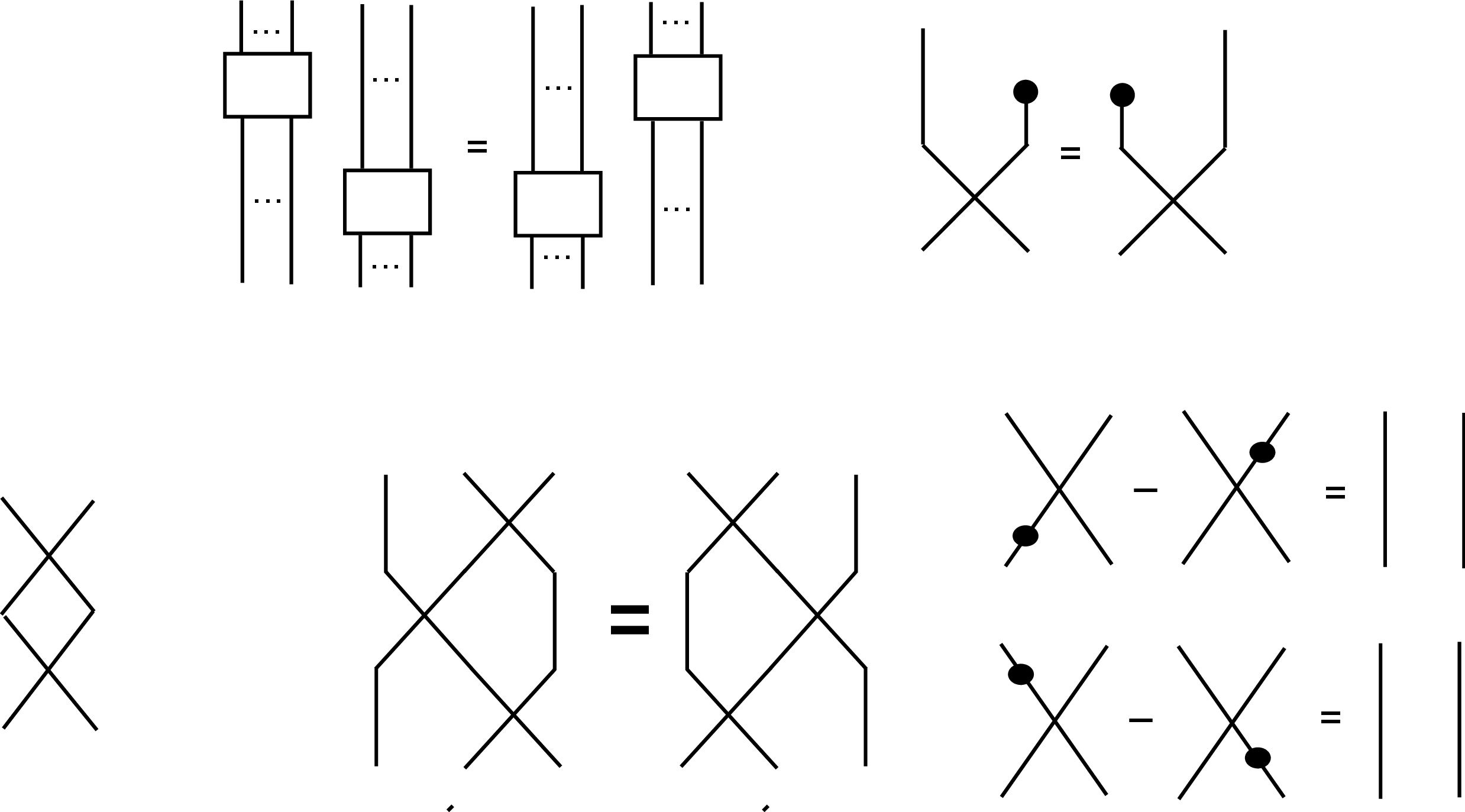}
\put(30,30){$a \od b$}
\put(18,49){$a$}
\put(45,49){$b$}
\put(37,41){$a$}
\put(26,41){$b$}
\put(9,13){$=0$}
\end{overpic}
\caption{Local relations of $\nh$.}
\label{fig alg2}
\end{figure}

The algebra $\nh$ is idempotented, i.e. has a complete system of mutually orthogonal idempotents $\{1_n\}_{n\ge 0},$ so that
$$ \nh \ = \ \bigoplus\limits_{m,n\ge 0} 1_m \nh 1_n.$$
Let $\nhmn$ denote its component $1_m \nh 1_n$.
It is spanned by diagrams with $m$ endpoints at the top and $n$ endpoints at the bottom.
The new generator $v_{i,n} \in \nh^{n-1}_{n}$.
Clearly, $\nhmn=0$ if $n<m$.
Let $NH_n$ denote the nilHecke algebra with the generators $x_i$ and $\pa_i$.
There is a surjective homomorphism $\psi_n: NH_n \ra \nh_{n}^{n}$ defined by $\psi_n(1)=1_n, \psi_n(x_i)=x_{i,n}, \psi_n(\pa_{i})=\pa_{i,n}$.
We will prove that $\psi_n$ is an isomorphism in Section \ref{Sec basis}.

A combination of the exchange relation, the isotopy relation and the nilHecke relations implies the following relations, see figure \ref{fig alg3}.
\begin{figure}[h]
\begin{overpic}
[scale=0.3]{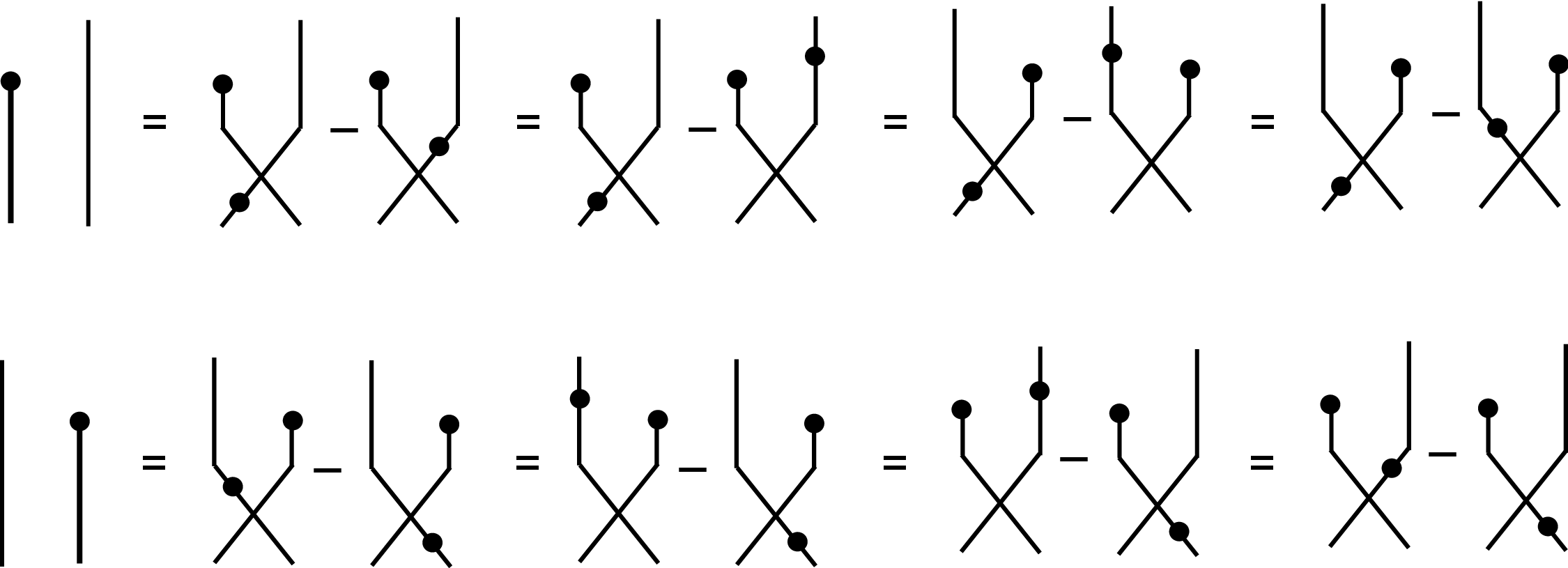}
\end{overpic}
\caption{Two induced relations}
\label{fig alg3}
\end{figure}

Motivated by these relations, we introduce a new diagram as a circle crossing, see figure \ref{fig alg4}.
It defines an element
\begin{gather} \label{def s}
s_{1,2}:=\pa_{1,2} x_{1,2} - x_{1,2} \pa_{1,2} = x_{2,2} \pa_{1,2} - \pa_{1,2} x_{2,2}  \in NH_2^2.
\end{gather}
The relations in figure \ref{fig alg3} can be rewritten in terms of the new diagram $s_{1,2}$.
The short strand can slide through the circle crossing.
We call this relation the {\em slide relation}.
As a corollary, the horizontal concatenation of two short strands is fixed by the circle crossing.
We call it the {\em relation (F)}.
\begin{figure}[h]
\begin{overpic}
[scale=0.3]{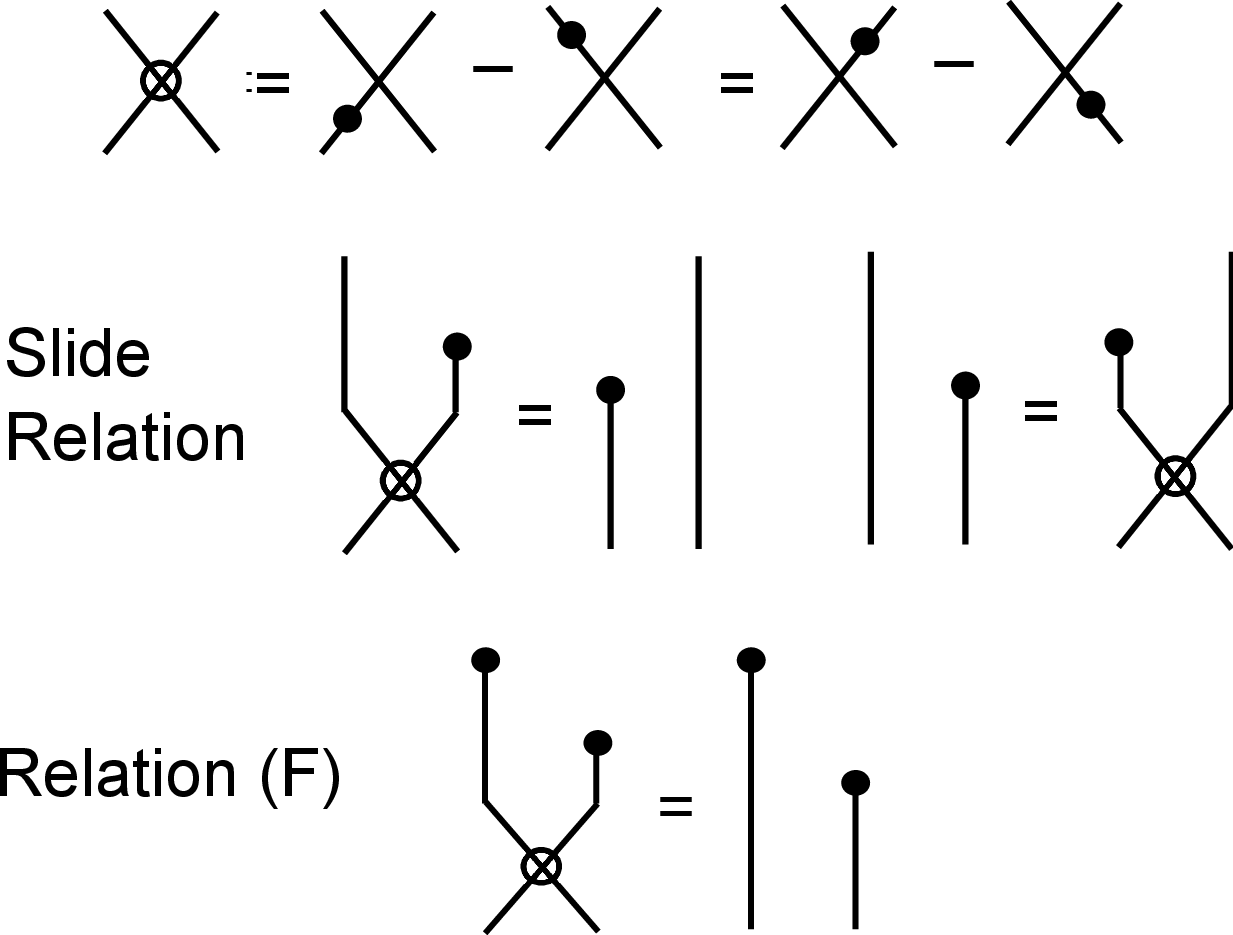}
\end{overpic}
\caption{A new diagram $s_{1,2}$, the slide relation, and the relation (F).}
\label{fig alg4}
\end{figure}

We discuss the interaction of the new diagram $s_{1,2}$ with the generators of $\nh$ in the following.
Firstly, we check that $s_{1,2}s_{1,2}=1_2$, and
\begin{gather} \label{eq s cr}
s_{1,2} \pa_{1,2} = \pa_{1,2}, \qquad  \pa_{1,2} s_{1,2} = - \pa_{1,2}.
\end{gather}
using diagrams, see figure \ref{fig alg5}.
\begin{figure}[h]
\begin{overpic}
[scale=0.3]{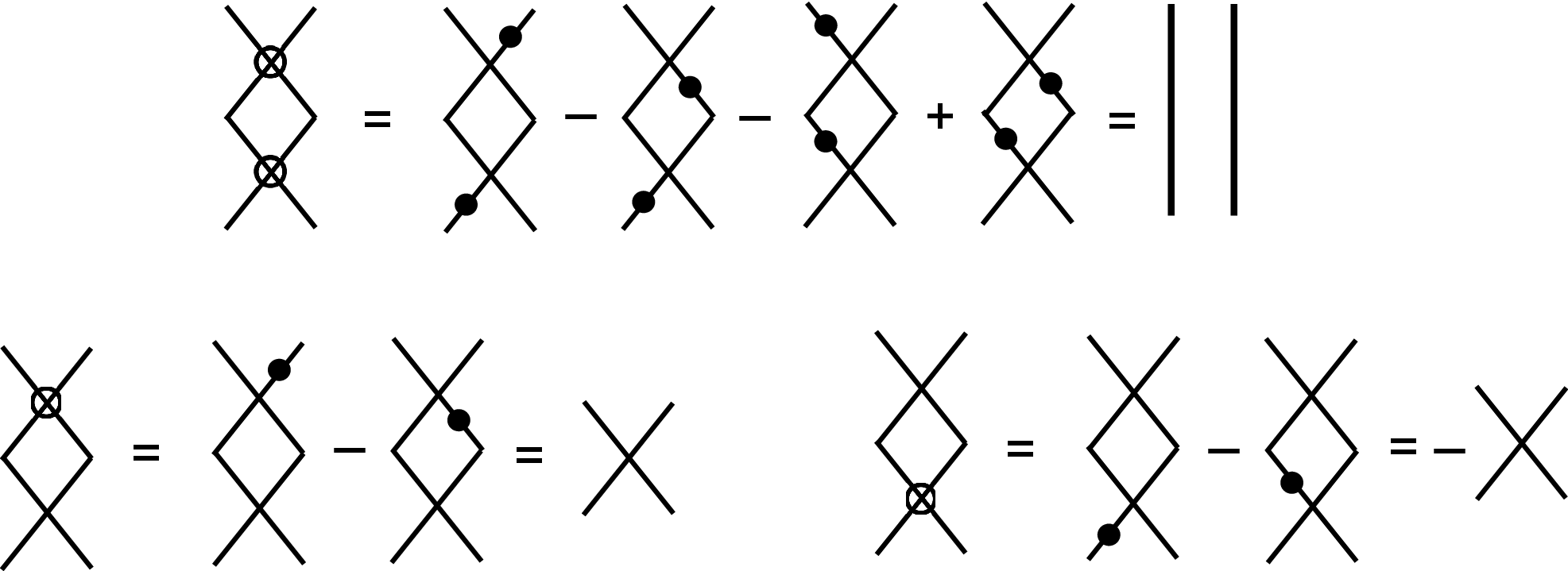}
\end{overpic}
\caption{Local relations about the circle crossing.}
\label{fig alg5}
\end{figure}
Secondly, we use the nilHecke relations to deduce more relations, see figure \ref{fig alg6}.
For $1\le i \le n-1$, define
$$s_{i,n}=1_{i-1} \od s_{1,2} \od 1_{n-i-1} \in NH_n^n$$
by adding $i-1$ and $n-i-1$ vertical strands to the left and right of $s_{1,2}$, respectively.
\begin{figure}[h]
\begin{overpic}
[scale=0.3]{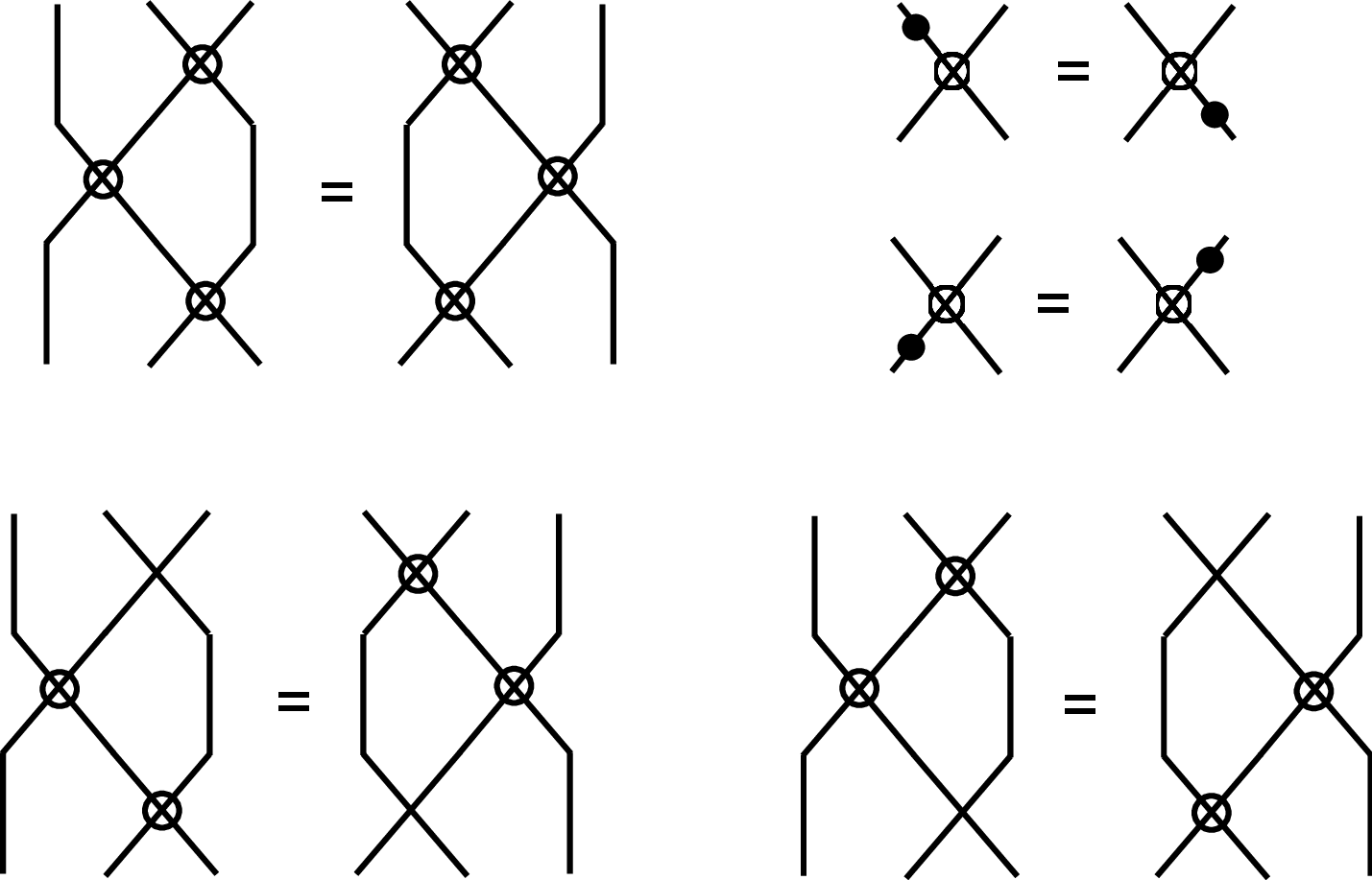}
\end{overpic}
\caption{More local relations for the circle crossings.}
\label{fig alg6}
\end{figure}

The diagrams used in defining $s_{1,2}$ do not have the short strand.
The same diagram defines an element $s_1:=\pa_{1} x_{1} - x_{1} \pa_{1} \in NH_2$ such that $\psi_2(s_1)=s_{1,2}$.
Similarly, define $s_i:=\pa_{i} x_{i} - x_{i} \pa_{i} \in NH_n$ such that $\psi_n(s_i)=s_{i,n}$.

\begin{lemma} \label{lem s}
(1) For $1\le i \le n-1$, the generators $s_{i,n}$ satisfy the defining relations of the symmetric group $S_n$.

\n(2) A dot can slide through the circle crossing, and a crossing and two circle crossings satisfy the braid relation, see figure \ref{fig alg6}.
\end{lemma}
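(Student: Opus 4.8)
First I would observe that $s_{i,n}=\psi_n(s_i)$ with $s_i=\partial_{i}x_{i}-x_{i}\partial_{i}\in NH_n$, so no short strands actually enter Lemma \ref{lem s}: since $\psi_n$ is an algebra homomorphism it suffices to prove every asserted identity inside the nilHecke algebra $NH_n$ and then apply $\psi_n$. Among the symmetric group relations of part (1), distant commutativity $s_is_j=s_js_i$ for $|i-j|>1$ is immediate because $s_i$ and $s_j$ are words in generators acting on disjoint pairs of strands; the quadratic relation $s_i^2=1_n$ reduces, after adjoining $i-1$ strands on the left and $n-i-1$ on the right, to $s_{1,2}s_{1,2}=1_2$, which is already recorded (figure \ref{fig alg5}); so the remaining content of (1) is the braid relation $s_is_{i+1}s_i=s_{i+1}s_is_{i+1}$, an identity living in $NH_3$. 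Likewise in (2) the dot-slides reduce to $x_1 s_{1,2}=s_{1,2}x_2$ and $x_2 s_{1,2}=s_{1,2}x_1$ in $NH_2$ (with $x_js_i=s_ix_j$ for $j\notin\{i,i+1\}$ trivial), and the stated crossing/two-circle-crossing identity is again an identity in $NH_3$.

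My plan is then to dispatch all of these through the standard faithful action of $NH_n$ on the polynomial ring $R_n=\K[x_1,\dots,x_n]$, where $x_i$ acts by multiplication and $\partial_i$ by the divided difference $\partial_i(f)=(f-\sigma_i f)/(x_i-x_{i+1})$, $\sigma_i$ the transposition of $x_i$ and $x_{i+1}$. A two-line computation shows that $s_i=\partial_ix_i-x_i\partial_i$ acts on $R_n$ exactly as $\sigma_i$: indeed $(\partial_i x_i - x_i\partial_i)(f)=(x_if-x_{i+1}\sigma_if)/(x_i-x_{i+1})-(x_if-x_i\sigma_if)/(x_i-x_{i+1})=\sigma_i f$. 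By faithfulness, every relation satisfied by the $\sigma_i$ in $S_n$ holds verbatim among the $s_i$ in $NH_n$ — in particular the braid relation — proving (1). For (2) the operator identity $\sigma_i\circ(\text{mult.\ by }x_j)\circ\sigma_i=\text{mult.\ by }\sigma_i(x_j)$ together with $s_i^2=1_n$ forces the dot-slides $x_is_i=s_ix_{i+1}$ and $x_{i+1}s_i=s_ix_i$; and conjugating a divided-difference operator $\partial_j$ by a permutation $w$ produces the divided difference attached to $w\sigma_jw^{-1}$, so the mixed braid identity of figure \ref{fig alg6} follows once one records which permutation the relevant product of circle crossings represents (using $\partial_i s_i=-\partial_i$, $s_i\partial_i=\partial_i$ from \eqref{eq s cr} for the degenerate cases). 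Faithfulness then transports each operator identity back to $NH_n$.

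Alternatively, a reader who prefers to stay inside the diagrammatic calculus — the route indicated by figures \ref{fig alg5}–\ref{fig alg6} — can replace each appeal to the polynomial representation by a bounded computation from the defining relations. For instance $s_i^2=1_n$ comes from expanding $(\partial_ix_i-x_i\partial_i)^2$, killing the $\partial_i^2$ term, rewriting $\partial_ix_i\partial_ix_i$, $x_i\partial_ix_i\partial_i$ and $\partial_ix_i^2\partial_i$ by repeated use of $\partial_ix_i=1_n+x_{i+1}\partial_i$, and collapsing via $\partial_ix_i-x_{i+1}\partial_i=1_n$; the dot-slide $x_is_i=s_ix_{i+1}$ and the two braid identities are entirely analogous finite manipulations, carried out in $NH_2$ and $NH_3$ respectively.

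The only real difficulty here is organizational rather than mathematical: expanded directly, the threefold products $s_is_{i+1}s_i$ and the mixed crossing/circle-crossing words produce a fair number of monomials in the $x$'s and $\partial$'s, so one genuinely wants either the one-line polynomial-representation argument or a disciplined diagrammatic bookkeeping as in figure \ref{fig alg6}. No essential ingredient beyond the nilHecke relations and \eqref{def s} is needed.
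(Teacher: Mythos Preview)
Your approach is essentially identical to the paper's: reduce via $\psi_n$ to $NH_n$, compute that $s_i$ acts on $\K[x_1,\dots,x_n]$ as the transposition $\sigma_i$, and conclude all the claimed relations (both the symmetric-group relations and the dot-slide and mixed braid relations) from faithfulness of the polynomial representation. The paper's proof is terser and does not spell out part (2) separately, but the argument is the same.
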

\begin{proof}
Under the map $\psi_n: NH_n \ra \nh_n^n$, it is enough to check the corresponding relations in $NH_n$.
Consider the action of $NH_n$ on the ring $\K[x_1, \dots, x_n]$ of polynomials, where $\pa_{i}$ acts as the divided difference operator, and $x_{i}$ acts as the multiplication by $x_i$.
For $f \in \K[x_1, \dots, x_n]$,
\begin{align*}
s_{i}(f) & = \pa_{i} x_{i}(f) - x_{i} \pa_{i} (f) = \pa_i (x_i f) - x_i \pa_i(f) \\
& = \frac{x_i f - x_{i+1} \wt{s}_i(f)}{x_i - x_{i+1}} - x_i \cdot \frac{f -  \wt{s}_i(f)}{x_i - x_{i+1}} = \wt{s}_i(f),
\end{align*}
where, $\wt{s}_i(f)(\dots, x_i, x_{i+1}, \dots)=f(\dots, x_{i+1}, x_{i}, \dots)$.
In other words, the operator induced by $s_{i}$ is the same as the operator $\wt{s}_i$ on $\K[x_1, \dots, x_n]$.
The relations hold in $NH_n$ since the action is faithful.
\end{proof}

The slide relation in figure \ref{fig alg4} says that the short strand can slide through the circle crossing.
Thus, we can reduce the generators of $\nh$ in Definition \ref{def nh} to $1_n, \pa_{i,n}, x_{i,n}$, and $v_{n,n}$.
Here, $v_{n,n}$ is the diagram with the short strand in the rightmost position.

\vspace{.1cm}
\n{\bf Notation: }Let $v_n=v_{n,n} \in \nh^{n-1}_{n}$ for simplicity.

\begin{defn} \label{def nh'}
Define a $\K$-algebra $\nh'$ by generators $1_n$ for $n \ge 0$, $\pa_{i,n}$ for $1 \le i \le n-1$, $x_{i,n}$ for $1 \le i \le n$, and $v_{n}$ for $n \ge 0$, subject to the relations consisting of three groups:

\n(1) Idempotent relations: the same as in Definition \ref{def nh} (1), except that $v_{i,n}$ is replaced by $v_n$.

\n(2) NilHecke relations: the same as in Definition \ref{def nh} (2).

\n(3) Short strands relations:
\begin{align*}
v_{n} x_{j,n} = x_{j,n-1} v_{n}, & \qquad \mbox{if}~j<n, \\
v_{n} \pa_{j,n} = \pa_{j,n-1} v_{n}, & \qquad\mbox{if}~j<n-1,   \\
v_{n-1} v_{n}=v_{n-1} v_{n} s_{n-1,n}, & \qquad \mbox{(Relation (F))}
\end{align*}
where $s_{n-1,n}=\pa_{n-1,n} x_{n-1,n} - x_{n-1,n} \pa_{n-1,n} = x_{n,n} \pa_{n-1,n} - \pa_{n-1,n} x_{n,n}$.
\end{defn}

\begin{lemma} \label{lem nh nh'}
The algebras $\nh$ and $\nh'$ are naturally isomorphic.
\end{lemma}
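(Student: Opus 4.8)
The plan is to exhibit mutually inverse algebra homomorphisms $\phi\colon\nh'\ra\nh$ and $\theta\colon\nh\ra\nh'$. I would let $\phi$ send $1_n\mapsto 1_n$, $x_{i,n}\mapsto x_{i,n}$, $\pa_{i,n}\mapsto\pa_{i,n}$ and $v_n\mapsto v_{n,n}$, and let $\theta$ fix $1_n,x_{i,n},\pa_{i,n}$ and send
\[
\theta(v_{i,n}) \ = \ v_n\, s_{n-1,n}\, s_{n-2,n}\cdots s_{i,n}\ \in\ \nh' ,
\]
where $s_{k,n}=\pa_{k,n}x_{k,n}-x_{k,n}\pa_{k,n}\in\nh'$ is the circle crossing; the chain is empty when $i=n$, so $\theta(v_{n,n})=v_n$. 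Since both maps fix the nilHecke generators and identify $v_n$ with $v_{n,n}$, the resulting isomorphism is the evident one and is compatible with $\psi_n$ and $\psi_n'$, which is the sense of naturality in the statement.

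Well-definedness of $\phi$ is routine: the idempotent and nilHecke relations of $\nh'$ occur verbatim among those of $\nh$, and each of the three short-strand relations of Definition~\ref{def nh'} follows from relations already available in $\nh$ --- the two sliding relations are the cases $i=n$ of the sliding relations of Definition~\ref{def nh}(3) (using $n>j$, resp.\ $n>j+1$), and relation (F) was derived inside $\nh$ in Figure~\ref{fig alg4}. The map $\phi$ is also surjective: iterating the slide relation of Figure~\ref{fig alg4} gives $v_{i,n}=v_{n,n}\,s_{n-1,n}\cdots s_{i,n}$ in $\nh$, so $v_{i,n}\in\op{im}\phi$, and the remaining generators of $\nh$ are obviously hit.

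The crux is the well-definedness of $\theta$: one must check that the images in $\nh'$ of all defining relations of $\nh$ hold. The idempotent and nilHecke relations are immediate, so it remains to treat the short-strand relations of Definition~\ref{def nh}(3). Since the nilHecke relations hold in $\nh'$, so do all of their earlier consequences for the circle crossings: the $s_{k,n}$ generate a copy of $S_n$ (Lemma~\ref{lem s}(1)), dots and crossings slide through circle crossings as in Lemma~\ref{lem s}(2), and $s_{k,n}\pa_{k,n}=\pa_{k,n}$ by \eqref{eq s cr}. Write $\sigma_{i,n}=s_{n-1,n}\cdots s_{i,n}$, whose underlying permutation fixes each index $<i$, sends $i\mapsto n$, and sends $j\mapsto j-1$ for $i<j\le n$. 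The exchange relation is then immediate from $s_{i,n}\pa_{i,n}=\pa_{i,n}$, since $\theta(v_{i,n})\pa_{i,n}=v_n\,s_{n-1,n}\cdots s_{i+1,n}\pa_{i,n}=\theta(v_{i+1,n})\pa_{i,n}$. For the two dot-sliding relations one pushes $x_{j,n}$ leftward through $\sigma_{i,n}$, which turns it into $x_{\sigma_{i,n}(j),n}$: when $i>j$ the index is unchanged and one applies $v_n x_{j,n}=x_{j,n-1}v_n$, and when $i<j$ it becomes $j-1$ and one applies $v_n x_{j-1,n}=x_{j-1,n-1}v_n$, the index conditions of Definition~\ref{def nh'}(3) being met in both cases. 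For the crossing-sliding relation with $i>j+1$, the crossing $\pa_{j,n}$ commutes past every factor of $\sigma_{i,n}$ and one applies $v_n\pa_{j,n}=\pa_{j,n-1}v_n$; for the case $i<j$, pushing $\pa_{j,n}$ leftward one first commutes it past the factors of index $\le j-2$, then uses the mixed braid relation of Figure~\ref{fig alg6} on the triple $s_{j,n}\,s_{j-1,n}\,\pa_{j,n}$ to rewrite it as $\pa_{j-1,n}\,s_{j,n}\,s_{j-1,n}$, after which $\pa_{j-1,n}$ commutes past all remaining factors (of index $\ge j+1$) and $v_n\pa_{j-1,n}=\pa_{j-1,n-1}v_n$ completes the reduction, restoring $\sigma_{i,n}$ intact. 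Finally, for $v_{i,n}v_{j,n+1}=v_{j,n}v_{i+1,n+1}$ with $i\ge j$, one expands both sides under $\theta$, slides the lower chains of circle crossings leftward past $v_{n+1}$ (legitimate by the sliding relations just verified, with indices unchanged) so as to bring $v_n$ and $v_{n+1}$ adjacent, and then reduces using braid relations together with relation (F) in the form $v_n v_{n+1}=v_n v_{n+1}s_{n,n+1}$; both sides collapse to $v_n v_{n+1}$ times one and the same element of the $S_{n+1}$-copy, as one confirms by a short computation with the underlying permutations.

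It then remains to check the composites. On generators $\theta\phi$ fixes $x_{i,n},\pa_{i,n},1_n$ and sends $v_n\mapsto v_{n,n}\mapsto v_n$, so $\theta\phi=\op{id}_{\nh'}$; similarly $\phi\theta$ fixes the nilHecke generators and $\phi\theta(v_{i,n})=v_{n,n}\,s_{n-1,n}\cdots s_{i,n}=v_{i,n}$ by the iterated slide relation, so $\phi\theta=\op{id}_{\nh}$. Hence $\phi$ is an isomorphism with inverse $\theta$. The main obstacle in this plan is the well-definedness of $\theta$, and within it the commutation $v_{i,n}v_{j,n+1}=v_{j,n}v_{i+1,n+1}$: there one must track carefully how indices shift as crossings are pushed through long chains of circle crossings and then invoke relation (F) at exactly the right stage, the $i<j$ crossing-sliding relation being a smaller instance of the same phenomenon.
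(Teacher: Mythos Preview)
Your proof is correct and follows essentially the same strategy as the paper: define the obvious map $\nh'\to\nh$, define the inverse $\nh\to\nh'$ by $v_{i,n}\mapsto v_n\,s_{n-1,n}\cdots s_{i,n}$, check well-definedness, and verify the composites are identities. (Your naming is swapped relative to the paper --- what you call $\theta$ the paper calls $\phi$ --- but the content is the same.) You supply more detail than the paper does on the verification of the short-strand relations under $\theta$, in particular the $i<j$ crossing-sliding case via the mixed braid relation $s_j s_{j-1}\pa_j=\pa_{j-1}s_j s_{j-1}$ and the commutation relation $v_{i,n}v_{j,n+1}=v_{j,n}v_{i+1,n+1}$, whereas the paper checks a couple of special cases diagrammatically and leaves the rest to the reader; your permutation computation for the latter indeed reduces to a single application of relation~(F), since $\tau_i\,\sigma_{j,n+1}=s_{n,n+1}\,\tau_j\,\sigma_{i+1,n+1}$ in $S_{n+1}$.
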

\begin{proof}
The algebra $\nh'$ has fewer generators and relations than $\nh$, except for the last relation in $\nh'$.
This relation is a direct consequence of the relation (F) in figure \ref{fig alg4}.
Therefore, there is a homomorphism $\nh' \ra \nh$.

We construct a map $\phi: \nh \ra \nh'$ in the opposite direction as follows.
On generators, define $\phi(a)=a$ for $a=1_n, \pa_{i,n}, x_{i,n}$, and
$$\phi(v_{n,n})=v_{n}, \qquad \phi(v_{i,n})=v_{n} s_{n-1,n} \cdots s_{i+1,n} s_{i,n}.$$
The definition of $\phi(v_{i,n})$ is motivated by the slide relation.
We need to prove that $\phi$ respects the defining relations of $\nh$.
This is clear for the relations (1), (2) in Definition \ref{def nh}.
Consider the short strand relations, see Definition \ref{def nh} (3).
In figure \ref{fig alg7}, we check some special cases using the relations in figures \ref{fig alg4}, \ref{fig alg5} and \ref{fig alg6}.
For $v_{i,n} x_{j,n} = x_{j-1,n-1} v_{i,n}$ when $i <j$, we check that
\begin{align*}
\phi(v_{i,n})\phi(x_{j,n})&=v_{n} s_{n-1,n} \cdots s_{i+1,n} s_{i,n}x_{j,n}=v_{n}x_{j-1,n}s_{n-1,n} \cdots s_{i+1,n} s_{i,n} \\
&=x_{j-1,n-1}v_{n} s_{n-1,n} \cdots s_{i+1,n} s_{i,n}=\phi(x_{j-1,n-1})\phi(v_{i,n}).
\end{align*}
The proof for the general cases is similar and left to the reader.
\begin{figure}[h]
\begin{overpic}
[scale=0.25]{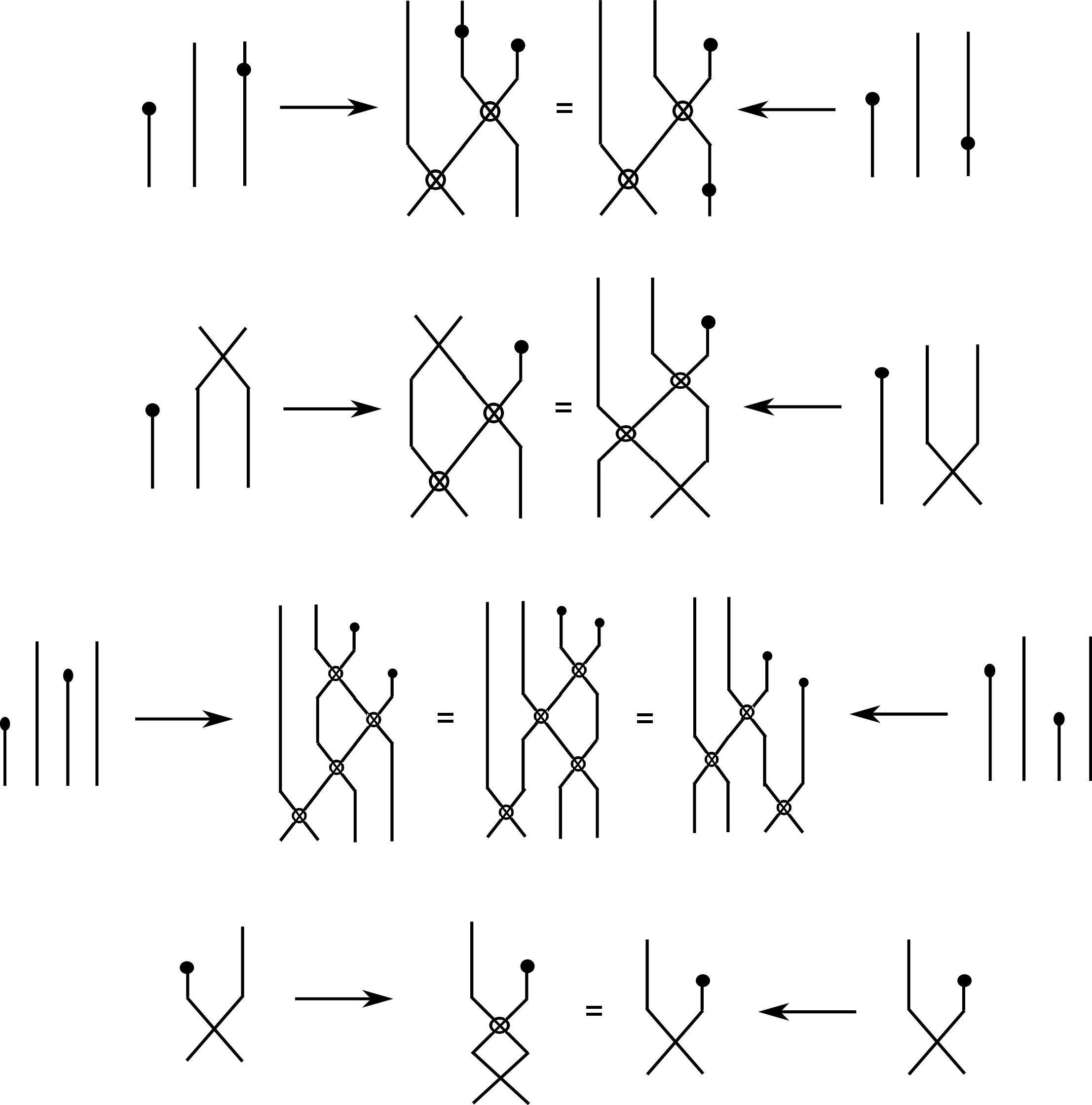}
\end{overpic}
\caption{Checking $\phi$ preserves the short strand relations, where the diagrams on the left and right sides are in $\nh$, the arrows represent the map $\phi$, and the diagrams in the middle between the arrows are in $\nh'$.}
\label{fig alg7}
\end{figure}

It is obvious that the two maps between $\nh$ and $\nh'$ are inverse to each other.
\end{proof}

We will identify $\nh'$ with $\nh$ under the isomorphism in the lemma.
%In other words, the two definitions of $\nh$ are equivalent.
Definition \ref{def nh'} has fewer generators and relations, while Definition \ref{def nh} is more symmetric and local diagrammatically.
Moreover, it is clear that Definition \ref{def nh} describes Hom spaces between powers of a generating object of a suitable monoidal category.
We will use both presentations.

\subsection{The basis} \label{Sec basis}
We first construct an action of $\nh$ which generalizes the action of nilHecke algebras on the rings of polynomials.
Let
\begin{gather} \label{def R}
R=\Z[r_1, r_2, \dots]
\end{gather}
be a graded ring of polynomials in infinitely many variables, where $\deg(r_i)=2i$.
Let $R_n=R[x_1, \dots, x_n]$ be the graded ring of polynomials over $R$, where $\deg(x_i)=2$.

In the following, we define an action $\alpha$ of $\nh=\bigoplus\limits_{m,n \ge 0} \nh^m_n$ on $\wt{R}=\bigoplus\limits_{n \ge 0} R_n$, where on each component $\alpha: \nh^m_n \ra \Hom_R(R_n, R_m)$.
For $m=n$, the action is the same as the nilHecke action on $R_n$:
$$\alpha(x_{i,n})(f)=x_i f, \qquad \alpha(\pa_{i,n})(f)=\frac{f-s_i(f)}{x_i-x_{i+1}},$$
for $f \in R_n$.
For $v_n \in \nh^{n-1}_n$, the operator $\alpha(v_n): R_n \ra R_{n-1}$ is defined on the $R$-basis $\{x_1^{i_1}\dots x_{n-1}^{i_{n-1}} x_{n}^{i_n}\}$ of $R_n$ by
\begin{gather} \label{eq action}
\alpha(v_n)(x_1^{i_1}\dots x_{n-1}^{i_{n-1}} x_{n}^{i_n}) = x_1^{i_1}\dots x_{n-1}^{i_{n-1}} r_{i_n} \in R_{n-1}.
\end{gather}
In other words, $\alpha(v_n)$ fixes the first $n-1$ variables, and maps $x_n^k$ to $r_k$.

To check that the action is well-defined, we use the presentation of $\nh$ in Definition \ref{def nh'}.
Since $\alpha(v_n)$ acts nontrivially only on the variable $x_n$, all isotopy relations and nilHecke relations are preserved under $\alpha$.
For the relation (F), both sides map $x_1^{i_1}\dots x_{n-2}^{i_{n-2}} x_{n-1}^{i_{n-1}} x_{n}^{i_n}$ to $x_1^{i_1}\dots x_{n-2}^{i_{n-2}} r_{i_{n-1}} r_{i_n}$.

\vspace{.2cm}
To compute the basis of $\nhmn$, we first discuss the case $m=0$.
Recall from Lemma \ref{lem s} that the $s_{i,n}$'s generate a symmetric group $S_n \subset NH_n$.
Let $$V_n=\Ind_{\Z[S_n]}^{NH_n} \Z$$ be the right $NH_n$-module induced from the trivial right $\Z[S_n]$-module $\Z$.
Let $NH_n \ra V_n$ denote the quotient map which takes $t \in NH_n$ to $\ov{t} \in V_n$.
As an abelian group, $V_n$ is generated by elements $\ov{t}$ for $t \in NH_n$, modulo the relation $\ov{w t}=\ov{t}$ for $w \in S_n$.

Let $\vzn=v_1v_2\cdots v_n \in \nhzn$ denote the diagram of $n$ short strands.
Define a map of abelian groups:
\begin{align} \label{eq eta}
\begin{array}{cccc}
\eta_n: & V_n & \ra & \nhzn \\
& \ov{t} & \mapsto & \vzn t.
\end{array}
\end{align}
The map $\eta_n$ is well-defined since $\eta_n(\ov{w t})=\vzn w t = \vzn t=\eta_n(\ov{t})$ for $w \in S_n$ from the relation (F), see figure \ref{fig alg4}.
It is surjective by definition.
The map $\eta_n$ is actually a homomorphism of right $NH_n$-modules.
Note that both $V_n$ and $\nhzn$ are graded.
Let $V_n(m)$ and $\nhzn(m)$ be their degree $2m$ components.

We will prove that $V_n$ is a free abelian group and show that $\eta_n$ is an isomorphism in the following.
Recall some results about the action of $NH_n$ on $\cal{P}_n=\Z[x_1, \dots, x_n]$, see \cite[Section 3.2]{La} for more detail.
Let $\la_n\subset \cal{P}_n$ denote the subring of symmetric polynomials.
Let $$\cal{H}_n=\{x_1^{j_1}\dots  x_{n}^{j_n} ~|~ 0 \le j_k \le n-k\},$$
and $\cal{H}_n(m)$ be its subset of elements of degree $2m$.
Then $\cal{P}_n$ is a finite-rank free module over $\la_n$ with basis $\cal{H}_n$.
There is a canonical isomorphism of rings
$$NH_n \cong \End_{\la_n}(\cal{P}_n).$$
Let $\pa_{w}=\pa_{i_1}\cdots \pa_{i_l} \in NH_n$ for a reduced word expression of $w=s_{i_1}\cdots s_{i_l} \in S_n$.
The number $l$ is called the length $l(w)$ of $w$.
Define $$t_{g,w}=g\cdot \pa_{w}\in NH_n,$$ for $w \in S_n, g \in \cal{P}_n$.
The collection $\{t_{g,w}~|~ w \in S_n, g ~\mbox{a monomial in}~ \cal{P}_n\}$ forms a $\Z$-basis of $NH_n$, see \cite[Proposition 3.5]{La}.
Let
$$\cal{F}_n(m)=\{x_1^{i_1}\dots  x_{n}^{i_n} ~|~ i_1  \ge \cdots \ge i_n \ge 0, \sum\limits_k i_k=m\},$$
$$\cal{B}_n(m)=\{t_{f,w} ~|~ w \in S_n, f \in \cal{F}_n(l(w)+m)\}.$$
Here, $\cal{F}_n(m)$ is a finite subset of $\cal{P}_n$, and $\cal{B}_n(m)$ a finite subset of $NH_n(m)$.
Moreover, $\cal{B}_n(m)$ is empty if $m<-\binom{n}{2}$.

\begin{lemma} \label{lem vn}
The abelian group $V_n(m)$ is free with a basis which is in bijection with $\cal{B}_n(m)$.
\end{lemma}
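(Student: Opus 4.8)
The plan is to compute $V_n=\Ind_{\Z[S_n]}^{NH_n}\Z=\Z\ot_{\Z[S_n]}NH_n$ structurally from the classical description $NH_n\cong\End_{\la_n}(\cal{P}_n)$ rather than from the diagrammatic presentation; here $\Z$ is the trivial right $\Z[S_n]$-module and $NH_n$ is a left $\Z[S_n]$-module through $\Z[S_n]\to NH_n$, $s_i\mapsto\pa_ix_i-x_i\pa_i$. The first step is to recall, from the proof of Lemma \ref{lem s}, that $s_i\in NH_n$ acts on the defining representation $\cal{P}_n=\Z[x_1,\dots,x_n]$ exactly as the variable-permutation operator $\wt{s}_i$, and that this action is $\la_n$-linear (which it must be since $s_i\in\End_{\la_n}(\cal{P}_n)$). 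Thus $\cal{P}_n$, regarded as a left $\Z[S_n]$-module via $\Z[S_n]\subset NH_n$, is simply the permutation representation.

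The second step is to decompose $NH_n$ as a graded left module over itself, hence over $\Z[S_n]$ by restriction. Writing the free $\la_n$-module $\cal{P}_n$ as $\bigoplus_{\mathfrak{h}\in\cal{H}_n}\la_n\mathfrak{h}$ and using that a $\la_n$-endomorphism is the same as its tuple of restrictions to the summands $\la_n\mathfrak{h}$, one gets a decomposition $NH_n=\End_{\la_n}(\cal{P}_n)=\bigoplus_{\mathfrak{h}\in\cal{H}_n}\Hom_{\la_n}(\la_n\mathfrak{h},\cal{P}_n)$ of graded left $NH_n$-modules (left multiplication acts by post-composition, which preserves each summand). Evaluation at $\mathfrak{h}$ identifies the $\mathfrak{h}$-summand with a grading shift of the defining representation $\cal{P}_n$, its degree $d$ part being $\cal{P}_n^{d+2|\mathfrak{h}|}$, where $|\mathfrak{h}|$ is the exponent sum of the monomial $\mathfrak{h}$ (so $\deg\mathfrak{h}=2|\mathfrak{h}|$), and the left $NH_n$-action is again the one on $\cal{P}_n$. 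Now apply $\Z\ot_{\Z[S_n]}(-)$: it commutes with direct sums and sends a left module $M$ to the coinvariants $M_{S_n}=M/\langle wm-m : w\in S_n,\ m\in M\rangle$. Writing $C_n$ for $(\cal{P}_n)_{S_n}$ and $C_n(k)$ for its degree $2k$ part, we obtain a graded isomorphism under which $V_n(m)$ becomes $\bigoplus_{\mathfrak{h}\in\cal{H}_n}C_n(m+|\mathfrak{h}|)$.

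The third step is to describe $C_n$. In each degree $\cal{P}_n$ is free on monomials, and the subgroup $\langle x^\alpha-x^{w\alpha}\rangle$ is spanned by differences of monomials lying in a common $S_n$-orbit; since the orbits of monomials of exponent sum $k$ are in bijection with partitions of $k$ into at most $n$ parts, whose decreasing representatives are precisely the elements of $\cal{F}_n(k)$, the group $C_n$ is free with homogeneous basis $\bigsqcup_{k\ge0}\cal{F}_n(k)$, so $C_n(k)$ is free on $\cal{F}_n(k)$. Hence $V_n(m)$ is free on $\bigsqcup_{\mathfrak{h}\in\cal{H}_n}\cal{F}_n(m+|\mathfrak{h}|)$. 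Finally, the multiset $\{\,|\mathfrak{h}| : \mathfrak{h}\in\cal{H}_n\,\}$ equals the multiset of lengths $\{\,l(w) : w\in S_n\,\}$, since both have generating function $\prod_{k=1}^{n}(1+q+\cdots+q^{k-1})$; a length-preserving bijection $\cal{H}_n\to S_n$ then matches $\bigsqcup_{\mathfrak{h}\in\cal{H}_n}\cal{F}_n(m+|\mathfrak{h}|)$ with $\bigsqcup_{w\in S_n}\cal{F}_n(m+l(w))=\cal{B}_n(m)$, which is the required bijection of bases.

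I expect the one delicate point to be the module-theoretic claim in step two: one must check carefully that under $\phi\mapsto(\phi(\mathfrak{h}))_{\mathfrak{h}\in\cal{H}_n}$ left multiplication in $NH_n$ goes over to applying the operator coordinatewise on the defining representation, so that each summand genuinely carries the permutation $S_n$-action up to a grading shift; once this is in place, the remainder is grading bookkeeping together with the elementary computation of monomial-orbit coinvariants. A more computational alternative would instead use the $\Z$-basis $\{t_{g,w}=g\,\pa_w\}$ of $NH_n$ directly: rewriting the relation $\ov{s_it}=\ov{t}$ as $\ov{(x_i-x_{i+1})\pa_it}=0$, one reduces every $\ov{t_{g,w}}$ to a $\Z$-combination of classes $\ov{t_{f,w'}}$ with $f$ a decreasing monomial, and then checks these span a free group of the rank computed above; but the twisted Leibniz rule for $\pa_i$ makes that bookkeeping heavier, so I would present the structural argument.
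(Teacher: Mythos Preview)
Your argument is correct and genuinely different from the paper's. The paper proceeds in two halves: it first shows that the classes $\{\ov{t}_{f,w}:t_{f,w}\in\cal{B}_n(m)\}$ span $V_n(m)$ by permuting any monomial $g$ to a decreasing one, and then proves their independence by pushing through the composite $\beta_n=\alpha\circ\eta_n$ into $\Hom_R(R_n,R)$ and checking that the images are $\Z$-independent (using the auxiliary ring $R=\Z[r_1,r_2,\dots]$ and the operator $\alpha(v_n)$ sending $x_n^k\mapsto r_k$). Your route instead exploits $NH_n\cong\End_{\la_n}(\cal{P}_n)$ to decompose $NH_n$ as a left $\Z[S_n]$-module into $|\cal{H}_n|$ shifted copies of the permutation module $\cal{P}_n$, then computes the $S_n$-coinvariants of $\cal{P}_n$ directly as the free group on decreasing monomials; the final count matches $\cal{B}_n(m)$ via the equality of length generating functions for $\cal{H}_n$ and $S_n$.

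What each approach buys: yours is cleaner and self-contained, avoiding the infinite-variable ring $R$ and the somewhat ad hoc action $\alpha$; it also makes transparent \emph{why} the rank is what it is. The paper's approach, however, does extra work that is reused immediately afterward: the injectivity of $\beta_n$ established there is exactly what yields the injectivity of $\eta_n\colon V_n\to\nhzn$ in Proposition~\ref{prop basis nh0n}. Your proof gives only an abstract bijection of bases, not the concrete basis $\{\ov{t}_{f,w}\}$; if you adopt your method you will still need a separate argument (e.g.\ directly verifying that the images $v_n^0\,t_{f,w_0}$ act independently on $R_n$) to obtain the isomorphism $V_n\cong\nhzn$.
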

\begin{proof}
Firstly, we claim that $\{\ov{t}_{f,w}~|~t_{f,w} \in \cal{B}_n(m)\}$ generates $V_n(m)$ over $\Z$.
Let $g=x_{l_1}^{i_1}\dots  x_{l_n}^{i_n}$ such that $i_1 \ge \cdots \ge i_n \ge 0$, and $\sum\limits_k i_k=l+m$.
Let $w_g \in S_n$ denote the permutation which maps $l_k$ to $k$.
Then $f= w_g (g) = x_1^{i_1}\dots  x_{n}^{i_n} \in \cal{F}_n(l+m)$.
So $w_g \cdot t_{g,w}=t_{f,w} \in NH_n$, and $\ov{t}_{g,w}=\ov{t}_{f,w}$.
The claim follows from that $\{t_{g,w}\}$ generates $NH_n(m)$ over $\Z$.

Secondly, we claim that the collection $\{\ov{t}_{f,w}~|~ t_{f,w} \in \cal{B}_n(m)\}$ is $\Z$-independent in $V_n(m)$.
%Suppose that $\sum a_{f,g} \cdot \ov{t}_{f,g}=0$ for some $a_{f,g} \in \Z$, where $f \in \cal{H}_n(l), g \in \cal{F}_n(l+m), l \ge 0$. Note that this is a finite summation.
Let $$\beta_n=\alpha \circ \eta_n: V_n \ra \nhzn \ra \Hom_R(R_n ,R).$$
Suppose that $\gamma=\sum k_{f,w}~ \beta_n(\ov{t}_{f,w})=\sum k_{f,w}~ \alpha(v_n^0 \cdot t_{f,w})=0 \in \Hom_R(R_n ,R)$ for some integers $k_{f,w}$.
We have $0=\gamma(1)=\sum k_{f,1}~ \alpha(v_n^0 \cdot t_{f,1})(1)$ since $\pa_w(1)=0$ for $w \neq 1 \in S_n$.
The image $\alpha(v_n^0 \cdot t_{f,1})(1)=r_{i_1}\cdots r_{i_n}$ for $f=x_1^{i_1}\cdots  x_{n}^{i_n} \in \cal{F}_n$.
Since $\{r_{i_1}\cdots r_{i_n}~|~i_1  \ge \cdots \ge i_n \ge 0\}$ is $\Z$-independent in $R$, we have $k_{f,1}=0$ for all $f \in \cal{F}_n$.
For $w_1 \in S_n$ with $l(w_1)=1$, let $h_{w_1} \in \cal{P}_n$ be the Schubert polynomial associated to $w_1$.
Then $\pa_{w_1}(h_{w_1})=1$, and $\pa_{w}(h_{w_1})=0$ for $w \neq w_1$ or $1$.
We have $0=\gamma(h_{w_1})=\sum k_{f,w_1}~ \alpha(v_n^0 \cdot t_{f,w_1})(h_{w_1})$.
It follows that $k_{f,w_1}=0$ for all $f \in \cal{F}_n$.
By applying $\gamma$ to all Schubert polynomials in $\cal{P}_n$, one can inductively show that $k_{f,w}=0$ for all $f \in \cal{F}_n$ and $w \in S_n$.
We conclude that $\{\beta_n(\ov{t}_{f,w})~|~ t_{f,w} \in \cal{B}_n(m)\}$ is $\Z$-independent.
\end{proof}
%Let $h_0=x_1^{n-1}\cdots  x_{n-1} \in \cal{H}_n$, $f=x_1^{i_1}\cdots  x_{n}^{i_n} \in \cal{F}_n$, and $w_0 \in S_n$ denote the longest element. The image $\beta_n(\ov{t}_{f,w_0})=\alpha(v_n^0 \cdot t_{f,w_0})$ maps $h_0$ to $r_{i_1}\cdots r_{i_n}$, and other generators in $\cal{H}_n \ot R$ to zero. Since $\{r_{i_1}\cdots r_{i_n}~|~i_1  \ge \cdots \ge i_n \ge 0\}$ is $\Z$-independent in $R$, the collection $\{\beta_n(\ov{t}_{f,w_0})~|~ f \in \cal{F}_n\}$ is also $\Z$-independent.

The map $\beta_n$ in the proof above is injective.
It implies that $\eta_n$ is also injective.
Combining with the fact that $\eta_n$ is surjective, we have the following description of $\nhzn$.

\begin{prop} \label{prop basis nh0n}
The map $\eta_n: V_n \ra  \nhzn$ is an isomorphism of free abelian groups.
Moreover, $\nhzn$ has a $\Z$-basis $\{v_n^0 t~|~ t \in \cal{B}_n(m), m \in \Z\}$ which is in bijection with $\cal{B}_n=\bigcup\limits_m\cal{B}_n(m)$.
\end{prop}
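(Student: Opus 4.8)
The plan is to read off both assertions from Lemma~\ref{lem vn} together with the surjectivity of $\eta_n$, using the factorization $\beta_n=\alpha\circ\eta_n$ already introduced in the proof of Lemma~\ref{lem vn}. First I would recall that $\eta_n\colon V_n\ra\nhzn$ is surjective, as noted right after its definition; concretely, by the slide relations every diagram in $\nhzn=1_0\nh 1_n$ equals $\vzn$ stacked above a diagram of $\nhn$, that is above $\psi_n(t)$ for some $t\in NH_n$, so the elements $\vzn t=\eta_n(\ov t)$ span $\nhzn$.

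Next I would prove that $\eta_n$ is injective. In the proof of Lemma~\ref{lem vn} it is shown that the generating family $\{\ov t_{f,w}~|~t_{f,w}\in\cal B_n(m)\}$ of $V_n(m)$ is carried by $\beta_n=\alpha\circ\eta_n\colon V_n\ra\Hom_R(R_n,R)$ to a $\Z$-independent family, the argument reducing to the $\Z$-independence of the monomials $r_{i_1}\cdots r_{i_n}$ in $R$. By Lemma~\ref{lem vn} this generating family is in fact a $\Z$-basis of $V_n(m)$, so $\beta_n$ is injective on $V_n(m)$, hence on all of $V_n$; therefore $\Ker\eta_n\subseteq\Ker\beta_n=0$ and $\eta_n$ is injective. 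Being surjective and injective, $\eta_n$ is an isomorphism of abelian groups, and since $V_n$ is free by Lemma~\ref{lem vn}, so is $\nhzn$.

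Finally, to obtain the explicit basis I would transport the basis of $V_n$ through $\eta_n$: the family $\{\ov t_{f,w}~|~t_{f,w}\in\cal B_n(m),~m\in\Z\}$ is sent bijectively onto $\{\vzn t_{f,w}~|~t_{f,w}\in\cal B_n(m),~m\in\Z\}=\{\vzn t~|~t\in\cal B_n\}$, which is then a $\Z$-basis of $\nhzn$, and injectivity of $\eta_n$ shows $t\mapsto\vzn t$ to be a degree-preserving bijection of $\cal B_n=\bigcup_m\cal B_n(m)$ onto this basis. I do not expect a genuine obstacle here: the real work is contained in Lemma~\ref{lem vn}, and the only points needing care are the direction of the implication (injectivity of the composite $\beta_n$ forces injectivity of the factor $\eta_n$) and the bookkeeping that $\eta_n$ carries the basis of $V_n$ exactly to the claimed family $\{\vzn t\}$.
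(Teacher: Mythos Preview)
Your proposal is correct and follows essentially the same approach as the paper: the paper also deduces injectivity of $\eta_n$ from injectivity of the composite $\beta_n=\alpha\circ\eta_n$ established in the proof of Lemma~\ref{lem vn}, combines this with the already-noted surjectivity of $\eta_n$, and then reads off the basis from Lemma~\ref{lem vn}. You have simply spelled out the details more fully than the paper's two-line argument.
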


The abelian group $\nhzn$ has a $\Z$-basis $\{v^0_n \cdot f \cdot \pa_{w} ~|~ w \in S_n, f \in \cal{F}_n\}$, where $\cal{F}_n=\bigcup\limits_m\cal{F}_n(m)$.
Diagrammatically, $f$ only has dots and numbers of dots are non-increasing from left to right, and $\pa_w$ only has crossings.  See the picture on the left in figure \ref{fig alg8}.

\begin{figure}[h]
\begin{overpic}
[scale=0.25]{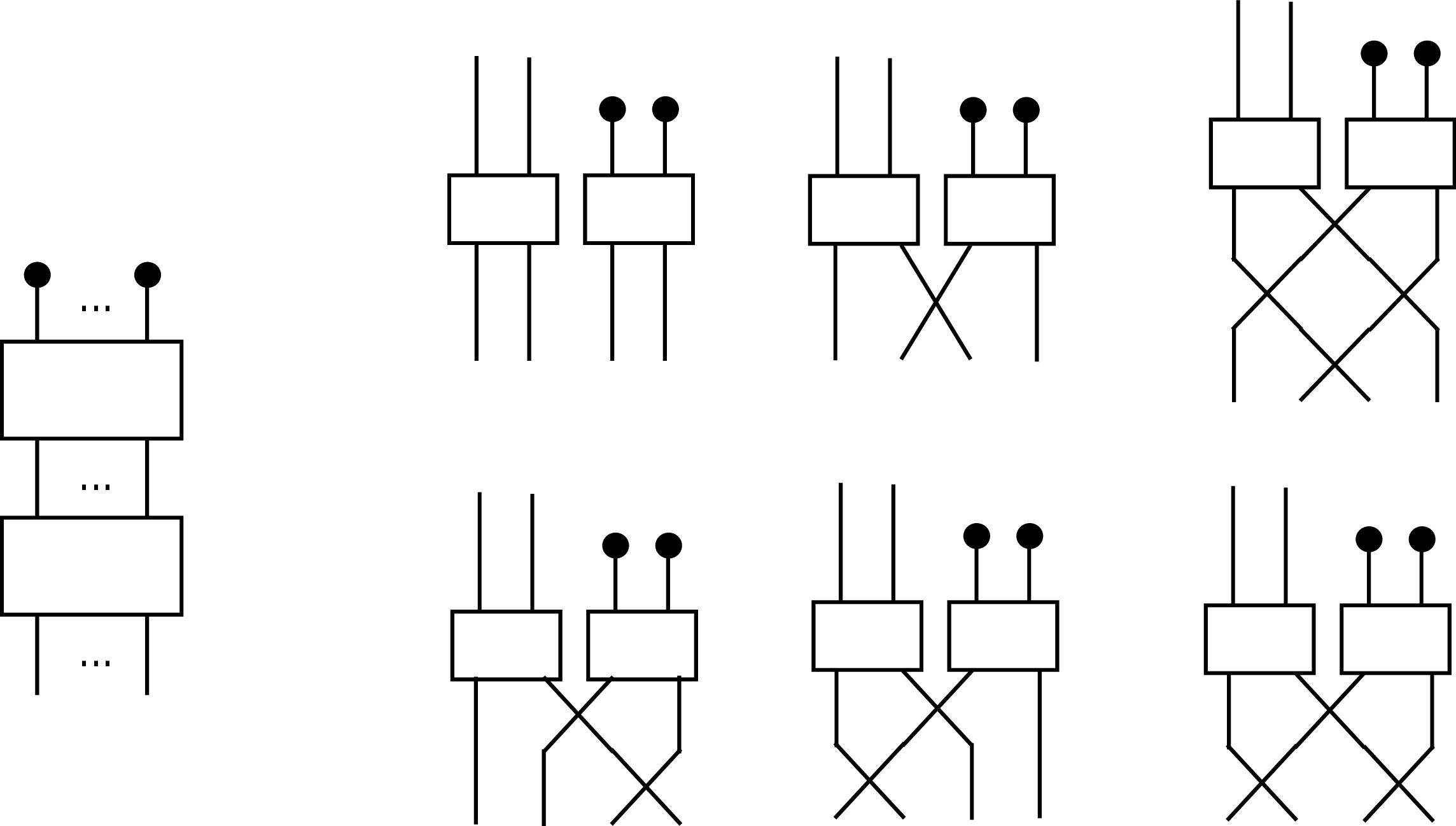}
\put(5,29){$f$}
\put(4,16){$\pa_w$}
\put(31.5,42){$\scriptscriptstyle{NH_k}$}
\put(41,42){$\scriptscriptstyle{NH^0_n}$}
\put(56.5,42){$\scriptscriptstyle{NH_k}$}
\put(65.5,42){$\scriptscriptstyle{NH^0_n}$}
\put(84,45.5){$\scriptscriptstyle{NH_k}$}
\put(93,45.5){$\scriptscriptstyle{NH^0_n}$}
\put(31.5,12){$\scriptscriptstyle{NH_k}$}
\put(41,12){$\scriptscriptstyle{NH^0_n}$}
\put(56.5,12){$\scriptscriptstyle{NH_k}$}
\put(65.5,12){$\scriptscriptstyle{NH^0_n}$}
\put(84,12){$\scriptscriptstyle{NH_k}$}
\put(93,12){$\scriptscriptstyle{NH^0_n}$}
\end{overpic}
\caption{The picture on the left describes the basis of $\nhzn$, where $f \in \cal{F}, \pa_w \in NH_n$; the six pictures on the right describe the basis of $\nh^k_{n+k}$ in terms of bases of $NH_k$ and $\nh^0_n$ in the case $n=k=2$.}
\label{fig alg8}
\end{figure}

The basis of $\nh^k_{n+k}$ can be described using bases of $NH_k$ and $\nhzn$.
Let $S_n \subset NH_{n+k}$ be the symmetric group generated by $s_{i,n+k}$'s for $i=k+1,\dots,n+k-1$.
Let $NH_k \subset NH_{n+k}$ be the inclusion given by adding $k$ vertical strands on the right.
Then the left actions of $S_n$ and $NH_k$ on $NH_{n+k}$ commute.
Define the induction module
$$V^k_{n+k}=\Ind_{\Z[S_n]}^{NH_{n+k}} \Z.$$
It is a left $NH_k$, right $NH_{n+k}$ module.
We call it an $(NH_k, NH_{n+k})$-bimodule.
Since $NH_{n+k}$ is a free left module over $NH_k$, $V^k_{n+k}$ is also free over $NH_k$.
There is a canonical surjective map $NH_{n+k} \ra V^k_{n+k}$.
Define a map
\begin{align} \label{eq eta2}
\begin{array}{cccc}
\eta^k_{n+k}: & V^k_{n+k} & \ra & \nh^k_{n+k} \\
& \ov{t} & \mapsto & v^k_{n+k} t,
\end{array}
\end{align}
for $t \in NH_{n+k}$, where $v^k_{n+k} \in \nh^k_{n+k}$ has $n$ short strands on the right.
It is known that $NH_k$ is a free abelian group and has a basis of the form
$$\cal{B}^k_k=\{f \cdot \pa_w ~|~w \in S_k,~ f ~\mbox{a monomial in}~ x_1,\dots,x_k\}.$$
By a similar argument as in the proof of Proposition \ref{prop basis nh0n}, one can prove the following result.

\begin{prop} \label{prop basis nh}
The map $\eta^k_{n+k}: V^k_{n+k} \ra  \nh^k_{n+k}$ is an isomorphism of $(NH_k, NH_{n+k})$-bimodules.
As an abelian group, $\nh^k_{n+k}$ is free with a basis $$\{v^k_{n+k} \cdot (t_k \od b_n) \cdot \pa_w ~|~ t_k \in \cal{B}_k^k, b_n \in \cal{B}_n, w ~\mbox{a minimal representative in}~ (S_{k} \times S_{n})\backslash S_{n+k}\}.$$
\end{prop}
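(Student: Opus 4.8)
The plan is to run, along the same lines, the proof of Proposition~\ref{prop basis nh0n}, with the trivial coefficient $\Z$ there replaced by the nilHecke algebra $NH_k$ sitting on the leftmost $k$ strands, and with Proposition~\ref{prop basis nh0n} itself invoked as a black box on the rightmost $n$ strands.

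\textbf{Step 1: $\eta^k_{n+k}$ is a surjective $(NH_k,NH_{n+k})$-bimodule map.} Well-definedness is relation~(F): padded on the left with $k$ vertical strands it gives $v^k_{n+k}\,s_{i,n+k}=v^k_{n+k}$ for $k+1\le i\le n+k-1$, hence $v^k_{n+k}w=v^k_{n+k}$ for every $w\in S_n$, so $\eta^k_{n+k}(\ov{wt})=\eta^k_{n+k}(\ov t)$. Right $NH_{n+k}$-linearity is immediate; left $NH_k$-linearity follows because each $x_{i,n+k},\pa_{i,n+k}$ with $i\le k$ slides through the $n$ short strands to its right with no index change (first two short-strand relations of Definition~\ref{def nh'}), so $a\,v^k_{n+k}=v^k_{n+k}\,a$ for $a\in NH_k$. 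Surjectivity is the ``count the short strands'' argument used for $\eta_n$: a diagram in $\nh^k_{n+k}$ carries exactly $n$ short strands, which can all be slid to the top, rewriting the diagram as $v^k_{n+k}\cdot t$ with $t$ a pure nilHecke diagram.

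\textbf{Step 2: spanning set.} Each basis element of $NH_{n+k}$ is $t_{g,u}=g\pa_u$ with $g$ a monomial and $u\in S_{n+k}$. Using the parabolic factorization $u=\sigma w$ with $\sigma\in S_k\times S_n$, $w$ the minimal-length representative of $(S_k\times S_n)u$, and $l(u)=l(\sigma)+l(w)$, write $\pa_u=\pa_\sigma\pa_w$ and $\pa_\sigma=\pa_{w_k}\od\pa_{w_n}$ for the block decomposition $\sigma=w_kw_n$; splitting $g=g_kg_n$ into first- and last-block monomials (which commute with divided differences of the opposite block) gives $g\pa_u=\bigl((g_k\pa_{w_k})\od(g_n\pa_{w_n})\bigr)\pa_w$, so $v^k_{n+k}\,t_{g,u}=v^k_{n+k}\bigl((g_k\pa_{w_k})\od(g_n\pa_{w_n})\bigr)\pa_w$. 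Now $g_k\pa_{w_k}\in\cal{B}^k_k$; moving it past the short strands by Step~1 and applying Proposition~\ref{prop basis nh0n} on the last $n$ strands, $v^k_{n+k}\cdot(1_k\od g_n\pa_{w_n})$ becomes a $\Z$-combination of $v^k_{n+k}\cdot(1_k\od b_n)$ with $b_n\in\cal{B}_n$. Collecting terms shows $\nh^k_{n+k}$ is spanned by $\{v^k_{n+k}(t_k\od b_n)\pa_w\}$ with $t_k\in\cal{B}^k_k$, $b_n\in\cal{B}_n$, $w$ a minimal representative.

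\textbf{Step 3: $\Z$-independence, and the main obstacle.} As in the proof of Lemma~\ref{lem vn}, push everything through the action $\beta^k_{n+k}=\alpha\circ\eta^k_{n+k}\colon V^k_{n+k}\to\Hom_R(R_{n+k},R_k)$ and evaluate the proposed basis elements on test vectors $h\cdot h_w$, with $h$ a monomial in $x_1,\dots,x_k$ and $h_w$ a monomial in $x_{k+1},\dots,x_{n+k}$ adapted to $w$ (in the way $h_0=x_{k+1}^{n-1}x_{k+2}^{n-2}\cdots x_{n+k-1}$ is adapted to the longest element in Lemma~\ref{lem vn}). Since $\alpha(v^k_{n+k})$ is the identity on $x_1,\dots,x_k$ and sends $x_{k+j}^a\mapsto r_a$, the resulting value is $\alpha(t_k)(h)$ times an $r$-monomial determined by $b_n$; faithfulness of $NH_k$ on $R_k$, freeness of $NH_k$, and $\Z$-independence of the $r$-monomials separate the $t_k$'s and the $b_n$'s within each fixed $w$-layer, while a downward induction on $l(w)$ (using, as in nilHecke theory, test monomials $h_w$ with $\pa_{w'}(h_w)=\delta_{w,w'}$ for $l(w')\ge l(w)$) rules out cancellation between layers. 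Hence $\beta^k_{n+k}$, and a fortiori $\eta^k_{n+k}$, is injective, so with Step~1 it is a bimodule isomorphism and the spanning set of Step~2 is a $\Z$-basis. I expect the genuinely fiddly points to be the two nested layers of bookkeeping — the parabolic coset representatives of $S_{n+k}$ in Step~2, and the simultaneous triangularity over the two strand blocks and over $l(w)$ in Step~3 — though each is a routine extension of the $k=0$ case already treated.
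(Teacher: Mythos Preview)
Your proposal is correct and follows the paper's approach: the paper's own proof is the single sentence ``By a similar argument as in the proof of Proposition~\ref{prop basis nh0n}, one can prove the following result,'' and your three steps are exactly that similar argument spelled out, with Proposition~\ref{prop basis nh0n} invoked on the rightmost $n$ strands and the faithful $NH_k$-action on $R_k$ handling the leftmost block. The fiddly points you flag (parabolic coset bookkeeping and the two-layer triangularity in Step~3) are indeed the routine extensions of the $k=0$ case that the paper chose to suppress.
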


See figure \ref{fig alg8} for an example with $n=k=2$.

\section{The complex lifting exponentials}

\subsection{The $\nh$-bimodule $\cf$.}

There is an inclusion
\begin{align} \label{eq inclusion}
\begin{array}{cccc}
\rho: & \nh & \ra & \nh \\
& a & \mapsto & a \od 1_1,
\end{array}
\end{align}
given by adding one vertical strand on the right to any diagram $a \in \nh$.
Consider the $\nh$-bimodule corresponding to the induction functor with respect to $\rho$.

\begin{defn} \label{def F}
Define an abelian group $\cf= \bigoplus\limits_{m \ge 0, n \ge 1}\nhmn$ with the $\nh$-bimodule structure given by
$a\cdot t \cdot b=a~t~\rho(b)$, where $a~t~\rho(b)$ is the product in $\nh$, for $a,b \in \nh$ and $t \in \cf$.
\end{defn}

Recall that $\nhmn=0$ if $m>n$.
See figure \ref{fig alg10} for a diagrammatic description of $\cf$.
The left and right multiplication correspond to stacking diagrams at the top and bottom, respectively.
Since the rightmost strand in $\cf$ is unchanged under stacking diagrams from the bottom, we call it the {\em frozen strand} of $\cf$, and add a little bar at its lower end.
\begin{figure}[h]
\begin{overpic}
[scale=0.3]{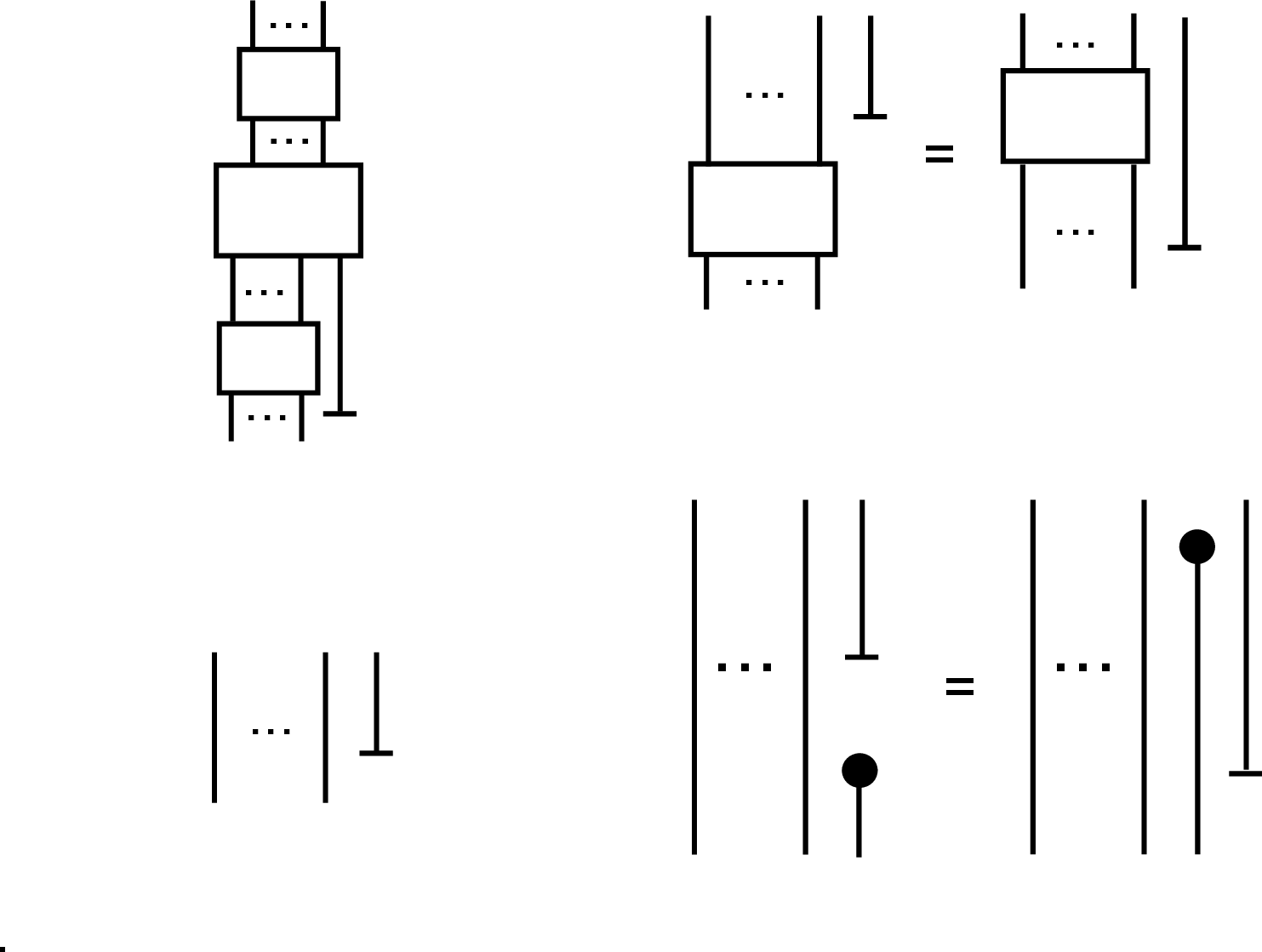}
\put(0,54){$\cf:$}
\put(-2,15){$[1_n]_f:$}
\put(21,25){$n$}
\put(19,8){$\scriptstyle{n-1}$}
\put(21,68){$a$}
\put(21,57){$t$}
\put(20,45){$b$}
\put(60,57){$a$}
\put(84,64){$a$}
\put(50,42){$[1_{n}]_f \cdot a  =[(a \od 1_1)\cdot 1_{n}]_f$}
\put(50,0){$[1_{n}]_f \cdot v_n  =[v_{n,n+1}\cdot 1_{n+1}]_f$}
\end{overpic}
\caption{The bimodule $\cf$, its generators $[1_n]_f$, and the right multiplication on $\cf$.}
\label{fig alg10}
\end{figure}

Let $P_n=\nh \cdot 1_n$ denote the left projective $\nh$-module associated to the idempotent $1_n \in \nh$.
As an abelian group, $P_n$ is spanned by all diagrams with $n$ strands at the bottom.
The summand $\bigoplus\limits_{0 \le m \le n}\nhmn$ of $\cf$ forms a left $\nh$-submodule of $\cf$, naturally isomorphic to $P_n$.
So there is a natural isomorphism
$$\cf \cong \bigoplus\limits_{n \ge 1}P_{n}$$
of left $\nh$-modules.
In particular, $\cf$ is projective as a left $\nh$-module.

For $t \in \nhmn$, let $[t]_f$ denote the corresponding element of $\cf$, which satisfies
$$1_m \cdot [t]_f = [t]_f = [t]_f \cdot 1_{n-1}.$$
As a left $\nh$-module, $\cf$ is generated by $[1_n]_f$ for $n \ge 1$, since $[t]_f=t \cdot [1_n]_f$ for any $t \in \nhmn$.
The generator $[1_n]_f$ is represented by the diagram of $n$ vertical strands with a little bar at the lower end of the rightmost strand.
The right multiplication on the generators is given by
\begin{align} \label{eq right F}
[1_{n}]_f \cdot 1_{n-1}  =[1_{n}]_f, & \\
[1_{n}]_f \cdot a  =[(a \od 1_1)\cdot 1_{n}]_f, & \qquad \mbox{for}~~a\in NH_{n-1}, \\
[1_{n}]_f \cdot v_n  =[1_n \cdot (v_n \od 1_1)]_f=[v_{n,n+1}\cdot 1_{n+1}]_f, &
\end{align}
see figure \ref{fig alg10}.

\vspace{.2cm}
From now on, all tensor products are taken over $\nh$. We will simply write $\ot$ for $\ot_{\nh}$.
Let $\cf^k$ denote the $\nh$-bimodule given by the $k$-th tensor product $\cf^{\ot k}$ over $\nh$.
As an abelian group,
$$\cf^k= \bigoplus\limits_{m \ge 0, n \ge k}\nhmn,$$
where the $\nh$-bimodule structure is given by
$a\cdot f \cdot b=a~f~\rho^k(b)$, where $a~f~\rho^k(b)$ is the product in $\nh$, for $a,b \in \nh$ and $f \in \cf$.
Here, $\rho^k=\rho \circ \cdots \circ \rho: \nh \ra \nh$ maps $b$ to $b \od 1_k$ for $b \in \nh$.
The map $\rho^k$ is diagrammatically given by adding $k$ vertical strands on the right.
%As a left $\nh$-module,
%\begin{gather} \label{eq cfk}
%\cf^k \cong \bigoplus\limits_{n \ge k}P_n.
%\end{gather}
%We fix this isomorphism of left $\nh$-modules.
Let $[t]_{f^k} \in \cf^k$ which corresponds to $t \in \nhmn$ for $n \ge k$.
The collection $\{[1_n]_{f^k}~|~n\ge k\}$ generates $\cf^k$ as a left $\nh$-module.

The nilHecke algebra $NH_k$ naturally acts on $\cf^k$ on the right as follows.
For any $c \in NH_k$, define a map
\begin{align} \label{eq rt nh}
\begin{array}{cccc}
r(c): & \cf^k & \ra & \cf^k \\
& [t]_{f^k} & \mapsto & [t \cdot (1_{n-k} \od c)]_{f^k}
\end{array}
\end{align}
for $t \in \nhmn$.
Diagrammatically, the map $r(c)$ stacks $t \in NH_k$ onto the $k$ frozen strands from the bottom.
The action commutes with the left and right multiplication of $\nh$ on $\cf^k$.
Hence, $r(c)$ is a map of $\nh$-bimodules.

\vspace{.2cm}
Let $\mf{D}(\nh^e)$ denote the derived category of $\nh$-bimodules.
It is a monoidal triangulated category whose monoidal bifunctor is given by the derived tensor product over $\nh$.
The unit object, denoted by $\mb$, is isomorphic to $\nh$ as an $\nh$-bimodule placed in cohomological degree zero.
Since $\cf$ is projective as a left $\nh$-module, the derived tensor product reduces to the ordinary tensor product between $\cf$'s.
In particular, we view $\cf^k=\cf^{\ot k}$ placed in cohomological degree zero as an object of $\mf{D}(\nh^e)$.
The subscript $\mf{D}(\nh^e)$ will be omitted in $\Hom_{\mf{D}(\nh^e)}$ from now on.
We compute $\End(\mb)$ and $\End(\cf)$ in the following.

\begin{lemma} \label{lem end 1}
The endomorphism ring $\End(\mb)$ is isomorphic to $\K$ with a generator $id_{\mb}$.
\end{lemma}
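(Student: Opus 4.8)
The plan is to compute $\End_{\mf{D}(\nh^e)}(\mb)$ directly as the degree-zero Hom, using that $\mb = \nh$ sits in cohomological degree zero and is its own projective resolution as an $\nh$-bimodule is \emph{not} available (the algebra is infinite-dimensional and non-unital in the idempotented sense), so I would instead argue at the level of chain maps. Since $\mb$ is concentrated in a single cohomological degree, $\Hom_{\mf{D}(\nh^e)}(\mb,\mb)$ contains $\Hom_{\nh\text{-}\mathrm{bimod}}(\nh,\nh)$ as the degree-zero part, and one first identifies the latter. A bimodule endomorphism $\varphi\colon \nh\to\nh$ is determined by the images $\varphi(1_n)$ of the idempotents; bimodule-linearity forces $\varphi(1_n)=1_n\varphi(1_n)1_n\in\nh_n^n$, and compatibility with multiplication by the generators $x_{i,n},\pa_{i,n},v_n$ pins down $\varphi(1_n)$ to be a central element of $\nh_n^n\cong NH_n$. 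Here I would invoke the known fact that the center of the nilHecke algebra $NH_n$ is the ring of symmetric polynomials $\la_n$, but then use commutation with the short strand $v_n\colon \nh_n^n\to\nh_{n-1}^{n-1}$ (which, by the relations $v_nx_{j,n}=x_{j,n-1}v_n$ for $j<n$ and $\alpha(v_n)(x_n^k)=r_k$ in the faithful action $\alpha$ on $\wt{R}$) to force the symmetric function to be a scalar independent of $n$. Thus $\Hom_{\nh\text{-}\mathrm{bimod}}(\nh,\nh)=\K\cdot id_{\mb}$.

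The next step is to rule out any contribution from negative or positive cohomological degree, i.e.\ to show $\Hom_{\mf{D}(\nh^e)}(\mb,\mb)$ has no components outside degree zero. Negative degree Homs vanish for trivial reasons since $\mb$ is a complex concentrated in degree zero (a nonzero map in $\mf{D}$ from a degree-zero complex to a shift of itself in negative cohomological degree would have to be represented by a roof, and the target being zero in those degrees kills it). For positive degree, one needs $\Ext^i_{\nh^e}(\nh,\nh)=0$ for $i>0$, i.e.\ that $\nh$ is a self-injective or at least Ext-acyclic bimodule over itself; more precisely one wants $\Ext^{>0}_{\nh^e}(\mb,\mb)=0$. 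This does \emph{not} follow from formal nonsense and is the real content. Here I would exploit the explicit basis from Proposition \ref{prop basis nh}: each $\nh_{n+k}^k$ is free as a right $NH_{n+k}$-module and as a left $NH_k$-module, so $\nh$ decomposes nicely into projectives on each side. Concretely, as a left $\nh$-module $\nh=\bigoplus_n P_n$ where $P_n=\nh\cdot 1_n$, and one can try to show that $P_n$ is also projective \emph{as a bimodule} after tensoring appropriately — or, failing that, compute $\Ext^i(\mb,\mb)$ via a bar-type resolution and use the basis to see that the relevant cohomology vanishes above degree zero because the only cycles are the scalar multiples of identities.

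I expect the main obstacle to be precisely this vanishing of higher self-Ext of the diagonal bimodule $\nh$. The introduction explicitly warns that $\Ext^1(\cf,\mb)$ is \emph{nontrivial} because of the short strand, so the short-strand generator is exactly what creates interesting Ext groups in this theory; the claim of the lemma is that for the unit object $\mb=\nh$ itself these higher Ext groups nonetheless vanish. The cleanest route is probably to observe that $\nh\ot_{\nh}\nh=\nh$ and that a projective bimodule resolution can be built from the pieces $\nh 1_n\ot_{\nh}1_n\nh$; since the left regular module $\nh=\bigoplus_n P_n$ is projective (noted in the text) and, dually, the right regular module is projective, one gets that $\nh$ is projective on each side, hence $\mathrm{Tor}$ and the relevant $\Ext$ reduce to a computation in $\Hom_{\nh\text{-}\mathrm{bimod}}$, which by the first paragraph is $\K$. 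In other words, the lemma should reduce to: $\nh$ is projective as a left module over itself $\Rightarrow$ the derived tensor product computing $\RHom(\mb,\mb)$ is underived $\Rightarrow$ $\End(\mb)=\Hom_{\nh^e}(\nh,\nh)=\K\,id_{\mb}$. Writing this reduction carefully, and checking that "projective as a one-sided module" genuinely suffices here (it does, since $\mb$ then has a length-zero resolution by a bimodule that is one-sided projective, which is enough to compute $\RHom(\mb,-)$ without further resolving), is the step that needs the most care.
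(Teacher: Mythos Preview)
Your first paragraph is essentially the paper's proof: a bimodule endomorphism $h$ of $\nh$ is determined by the central elements $h(1_n)\in Z(NH_n)\cong\la_n$, and the relation $h(1_{n-1})\cdot v_n=h(v_n)=v_n\cdot h(1_n)$ in $\nh^{n-1}_n$ propagates the scalar $h(1_0)=a\cdot 1_0$ to $h(1_n)=a\cdot 1_n$ for all $n$ by induction. That is the entire argument the paper gives; your appeal to the faithful action $\alpha$ is one way to justify the inductive step, but nothing more elaborate is needed.

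Your second and third paragraphs chase a phantom and contain a genuine error. In $\mf{D}(\nh^e)$ the symbol $\End(\mb)$ means $\Hom_{\mf{D}(\nh^e)}(\mb,\mb)$ with no shift, and for modules $M,N$ concentrated in cohomological degree zero one always has $\Hom_{\mf{D}(A)}(M,N)=\Hom_A(M,N)$; higher $\Ext$ groups would only appear in $\Hom(\mb,\mb[i])$ for $i>0$, which is not what the lemma computes. So no vanishing of $\Ext^{>0}_{\nh^e}(\nh,\nh)$ is required, and the paper neither claims nor uses it. Moreover, the reduction you sketch for that vanishing is wrong: one-sided projectivity of $\nh$ over itself says nothing about projectivity as an $\nh^e$-module and does not make $\RHom_{\nh^e}(\nh,-)$ underived. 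The standard cautionary example is $A=\K[x]$, which is free over itself yet has $\Ext^1_{A^e}(A,A)=HH^1(A)\neq 0$. Had the lemma actually demanded $\Ext^{>0}_{\nh^e}(\nh,\nh)=0$, your argument would not establish it.
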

\begin{proof}
For any $h \in \End(\mb)$, $h(1_n) \in NH_n$ lives in the center $Z(NH_n)$ which is isomorphic to the ring $\la_n$ of symmetric functions.
Moreover, $h(1_{n-1}) \cdot v_n=h(v_n)=v_n \cdot h(1_n) \in \nh^{n-1}_{n}$.
Suppose $h(1_0)=a \cdot 1_0 \in Z(NH_0)$ for some $a \in \K$.
Then $h(1_n)=a \cdot 1_n \in Z(NH_n)$ by induction on $n$.
So $h=a \cdot id_{\mb}$.
\end{proof}

There is a natural inclusion $\rho_n: NH_n \ra NH_{n+1}$ of algebras which sends $a$ to $a \od 1_1$.
Let $C_{n+1}(n)$ denote the centralizer of $NH_n$ in $NH_{n+1}$ with respect to the inclusion.
\begin{prop} \label{prop center}
There is an isomorphism $C_{n+1}(n) \cong Z(NH_n) \otimes \Z[x_{n+1}]$ of rings.
\end{prop}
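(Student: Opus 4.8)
The plan is to exhibit an explicit ring homomorphism $Z(NH_n)\otimes\Z[x_{n+1}]\to C_{n+1}(n)$ and show it is bijective using the known free-module structure of $NH_{n+1}$ over $NH_n$. First I would recall that $NH_{n+1}$ is a free left $NH_n$-module (via $\rho_n$), with an explicit basis: since $NH_{n+1}\cong \End_{\Lambda_{n+1}}(\mathcal P_{n+1})$ and $\mathcal P_{n+1}$ is free over $\mathcal P_n$ on $\{x_{n+1}^j : 0\le j\le n\}$ while $NH_n$ acts only on the first $n$ variables, one gets that $NH_{n+1}$ is a free left $NH_n$-module on the set $\{x_{n+1}^j\,\pa_{w}: 0\le j\le n,\ w\in S_{n+1}/S_n\}$ where $w$ runs over minimal coset representatives — or, more conveniently for this argument, a basis adapted to the decomposition $NH_{n+1}=NH_n\cdot NH_{n+1}$ where the right factor encodes the "extra" strand. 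The central elements $Z(NH_n)=\Lambda_n$ (symmetric functions in $x_1,\dots,x_n$) commute with all of $\rho_n(NH_n)$ trivially, and $x_{n+1}$ commutes with $\rho_n(NH_n)$ because $\rho_n(NH_n)$ only involves $x_1,\dots,x_n,\pa_1,\dots,\pa_{n-1}$, none of which interact with $x_{n+1}$. Since $\Lambda_n$ and $x_{n+1}$ also commute with each other, multiplication gives a well-defined ring map $\mu:\Lambda_n\otimes_{\Z}\Z[x_{n+1}]\to C_{n+1}(n)$.

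Next I would prove $\mu$ is injective. The composite $\Lambda_n\otimes\Z[x_{n+1}]\hookrightarrow NH_{n+1}$ lands in $NH_{n+1}$, and I claim $\Lambda_n\otimes\Z[x_{n+1}]$ is already free over $\Lambda_n$ on the powers $\{x_{n+1}^j\}_{j\ge0}$, which sit inside a left $NH_n$-basis of $NH_{n+1}$ (the elements $x_{n+1}^j$ for $j\ge0$ are $\Lambda_n$-independent since $\mathcal P_{n+1}$ is free over $\mathcal P_n$ on $1,x_{n+1},x_{n+1}^2,\dots$, hence a fortiori over $\Lambda_n\subset\mathcal P_n$... one must be a little careful: $\Lambda_n$-freeness of $\{x_{n+1}^j\}$ follows because $\mathcal P_n$ is free over $\Lambda_n$ and $\mathcal P_{n+1}$ free over $\mathcal P_n$ on these powers). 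Injectivity follows. The essential content is surjectivity: I must show every element of $NH_{n+1}$ commuting with $\rho_n(NH_n)$ lies in $\Lambda_n[x_{n+1}]$. For this I would use the faithful polynomial action: $NH_{n+1}\cong\End_{\Lambda_{n+1}}(\mathcal P_{n+1})$, and an endomorphism $\phi$ commutes with $\rho_n(NH_n)=\End_{\Lambda_n}(\mathcal P_n)\subset\End_{\Lambda_{n+1}}(\mathcal P_{n+1})$ (acting on the $\mathcal P_n$-factor of $\mathcal P_{n+1}\cong\mathcal P_n\otimes_{\Lambda_n}\Lambda_n[x_{n+1}]$ — here $\Lambda_n[x_{n+1}]$ should be read as the free $\Lambda_n$-module on powers of $x_{n+1}$) iff it is $\End_{\Lambda_n}(\mathcal P_n)$-linear. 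By a Schur/double-centralizer style computation — $\mathcal P_n$ is a faithful $\End_{\Lambda_n}(\mathcal P_n)$-module and $\End_{\Lambda_n}(\mathcal P_n)$ is an Azumaya-type algebra over $\Lambda_n$, in fact a matrix algebra $M_{n!}(\Lambda_n)$ — the centralizer of $\End_{\Lambda_n}(\mathcal P_n)$ acting on $\mathcal P_n\otimes_{\Lambda_n}\Lambda_n[x_{n+1}]$ is exactly $\Lambda_n[x_{n+1}]$ acting by scalars on the left factor. Unwinding, this centralizer inside $\End_{\Lambda_{n+1}}(\mathcal P_{n+1})$ is precisely the image of $\mu$.

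The main obstacle I anticipate is the surjectivity step, specifically making the double-centralizer argument clean over the base ring $\Lambda_n$ rather than over a field: one needs that $\mathcal P_n$ is a faithfully projective $\Lambda_n$-module so that $\End_{\Lambda_n}(\mathcal P_n)$ is Azumaya and the centralizer theorem applies, and one needs to correctly identify $\mathcal P_{n+1}$ as $\mathcal P_n\otimes_{\Lambda_n}M$ for the right $\Lambda_n$-module $M=\bigoplus_{j=0}^{\infty}\Lambda_n\cdot x_{n+1}^j$ — note $\mathcal P_{n+1}$ is genuinely infinite rank over $\Lambda_n$, which is why $x_{n+1}$ survives as a non-central extra generator. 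An alternative, more hands-on route that sidesteps Azumaya theory: pick the explicit left $NH_n$-basis $\{x_{n+1}^j\pa_w\}$ of $NH_{n+1}$, write an arbitrary element as $\sum_{j,w} a_{j,w}(x_{n+1}^j\pa_w)$ with $a_{j,w}\in NH_n$, impose commutation with $x_{i}$ and with $\pa_{i}$ for $1\le i\le n$ (resp.\ $1\le i\le n-1$), and show the constraints force all $w\ne e$ terms to vanish and each $a_{j,e}$ to be central in $NH_n$. This is a finite, if slightly tedious, linear-algebra computation that avoids any structural input beyond the nilHecke relations and the basis theorem, so I would likely present that version to keep the proof self-contained.
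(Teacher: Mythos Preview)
Your construction of the map $\mu:\Lambda_n\otimes\Z[x_{n+1}]\to C_{n+1}(n)$ and the injectivity argument are fine. The problem is with surjectivity via the Azumaya/double-centralizer route. The hypothesis you need for that theorem is that the center $\Lambda_n=Z(NH_n)$ be central in the ambient algebra $NH_{n+1}$, and it is not: for instance $\pa_n$ does not commute with $x_1+\cdots+x_n\in\Lambda_n$, since $\pa_n x_n - x_n\pa_n = 1+(x_{n+1}-x_n)\pa_n\neq 0$. Consequently $NH_{n+1}=\End_{\Lambda_{n+1}}(\cal{P}_{n+1})$ is \emph{not} contained in $\End_{\Lambda_n}(\cal{P}_{n+1})$, so your ``unwinding'' step---intersecting the large centralizer $\End_{\Lambda_n}(\Lambda_n[x_{n+1}])$ with $NH_{n+1}$ inside $\End_{\Lambda_n}(\cal{P}_{n+1})$---does not make sense as written. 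The same obstruction rules out the naive consequence $NH_{n+1}\cong NH_n\otimes_{\Lambda_n}C_{n+1}(n)$: if that held with $C_{n+1}(n)=\Lambda_n[x_{n+1}]$ one would get $NH_{n+1}=NH_n\cdot\Lambda_n[x_{n+1}]$, which visibly misses $\pa_n$.

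Your alternative hands-on route (expand in the left $NH_n$-basis $\{x_{n+1}^j\pa_\sigma\}$ and impose commutation) is viable and close to what the paper actually does, but the paper organizes the computation more structurally. It invokes the $(NH_n,NH_n)$-bimodule decomposition
\[
{}_{NH_n}(NH_{n+1})_{NH_n}\ \cong\ \bigl(NH_n\otimes_{\Z}\Z[x_{n+1}]\bigr)\ \oplus\ \bigl(NH_n\otimes_{NH_{n-1}}NH_n\bigr),
\]
observes that the bimodule center of the first summand is exactly $Z(NH_n)\otimes\Z[x_{n+1}]$, and then shows by a short trick that the second summand has trivial bimodule center: writing $c=\sum_{i,k}c_{i,k}\,(1\otimes x_n^i\pa_{[k]})$ and comparing $x_{n-k_0}c$ with $c\,x_{n-k_0}$ for the maximal $k_0$ appearing forces a contradiction unless $c=0$. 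This avoids imposing all the $x_i,\pa_i$ commutations simultaneously and isolates the only genuinely nontrivial case. If you pursue your basis approach, separating off the $\sigma=e$ part first (which is the sub-bimodule $NH_n\otimes\Z[x_{n+1}]$) will make the remaining computation land exactly on the paper's argument.
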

\begin{proof}
The $NH_n$-bimodule $NH_{n+1}$ has a decomposition
$$_{NH_n}(NH_{n+1})_{NH_n} \cong (NH_n \otimes_{\K} \K[x_{n+1}]) \oplus (NH_n \otimes_{NH_{n-1}} NH_n),$$
where $NH_n \otimes_{\K} \K[x_{n+1}]$ is isomorphic to a sum of $\mathbb{N}$ copies of $NH_n$ as $NH_n$-bimodule, see figure \ref{fig alg15}.
The center of the bimodule $NH_n \otimes_{\K} \K[x_{n+1}]$ is isomorphic to $Z(NH_n) \otimes \Z[x_{n+1}]$.
It suffices to show that the center of the bimodule $NH_n \otimes_{NH_{n-1}} NH_n$ is trivial.

Note that $NH_n \otimes_{NH_{n-1}} NH_n$ is a free left $NH_n$-module with a basis
$$\{b_{i,k}=1_n \otimes (x_n^i  \pa_{[k]}) ~|~ i \ge 0, 0 \le k \le n-1\},$$
where $\pa_{[k]}=\pa_{n-1}\cdots \pa_{n-k}$ for $k \ge 1$ and $\pa_{[0]}=1_n$.
Let $c=\sum\limits_{i,k}c_{i,k}\cdot b_{i,k} \in NH_n \otimes_{NH_{n-1}} NH_n$, for $c_{i,k} \in NH_n$.
Let $k_0(c)=\max \{k ~|~ c_{i,k} \neq 0 ~\mbox{for some}~ i\}$, and $i_0(c)=\max \{i ~|~ c_{i,k_0(c)} \neq 0\}$.
Consider $c_1=x_{n-k_0} \cdot c$, and $c_2=c \cdot x_{n-k_0}$, for $x_{n-k_0} \in NH_n$.
Then $k_0(c_1)=k_0(c_2)=k_0(c)$, but $i_0(c_1)=i_0(c), i_0(c_2)=i_0(c)+1$.
Thus $c_1 \neq c_2$ which implies that the center is trivial.
\end{proof}

\begin{figure}[h]
\begin{overpic}
[scale=0.25]{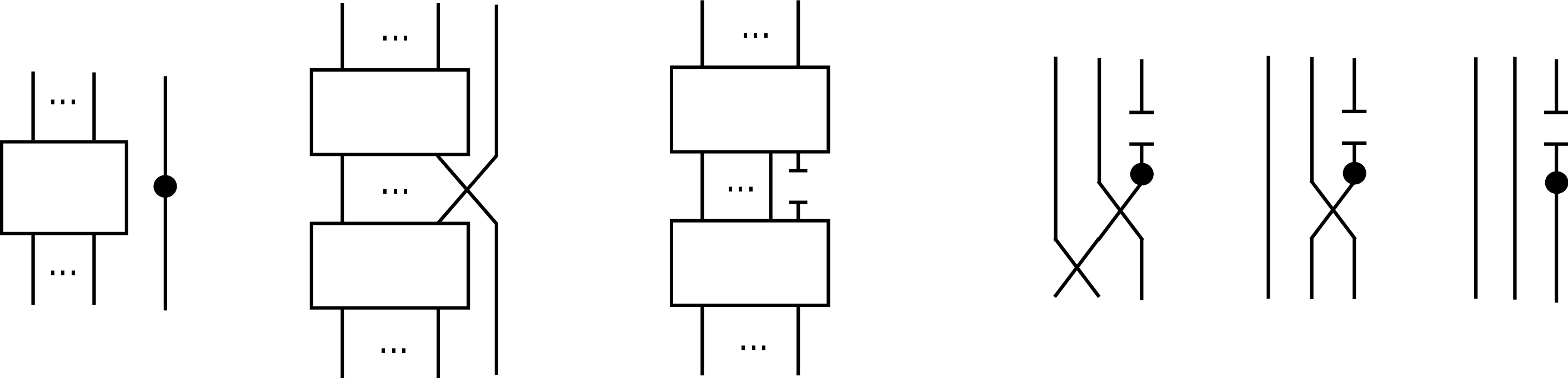}
\put(2,11){$\scriptstyle{NH_n}$}
\put(22,6){$\scriptstyle{NH_n}$}
\put(22,16){$\scriptstyle{NH_n}$}
\put(35,10){$\cong$}
\put(45,6){$\scriptstyle{NH_n}$}
\put(45,16){$\scriptstyle{NH_n}$}
\put(69,0){$b_{i,2}$}
\put(82,0){$b_{i,1}$}
\put(94,0){$b_{i,0}$}
\put(74,12){$i$}
\put(88,12){$i$}
\put(101,12){$i$}
\end{overpic}
\caption{The $NH_n$-bimodule $NH_{n+1}$.}
\label{fig alg15}
\end{figure}

Proposition \ref{prop center} together with a similar argument as in the proof of Lemma \ref{lem end 1} gives the following result.
\begin{cor}
There is a ring isomorphism $\K[x] \ra \End(\cf)$ which maps $c \in \K[x]$ to $r(c) \in \End(\cf)$.
\end{cor}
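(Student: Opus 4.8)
The plan is to mimic the proof of Lemma \ref{lem end 1}, using the description of the centralizer given in Proposition \ref{prop center}. First I would record that an element $h \in \End(\cf)$ is determined by its values on the generators $[1_n]_{f}$, $n \ge 1$, and that by left $\nh$-linearity $h([1_n]_f) = [1_n]_f \cdot c_n$ for some element $c_n$ lying in the correct Hom-space; concretely, $h([1_n]_f)$ must be of the form $[(1_{n-1} \od a_n) \cdot 1_n]_f$ where the right multiplication by the frozen strand forces $a_n$ to commute with the image of $NH_{n-1}$ acting on the first $n-1$ strands. So $h([1_n]_f) = [t_n]_f$ for some $t_n \in \nhn = NH_n$ that centralizes $\rho_{n-1}(NH_{n-1}) = NH_{n-1} \od 1_1 \subset NH_n$; that is, $t_n \in C_n(n-1)$.

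Next I would use the compatibility of $h$ with the right $\nh$-action, in particular with right multiplication by $v_n$ (which raises $n-1$ to $n$): the relation $[1_{n-1}]_f \cdot v_n = [v_{n,n}]_f$ together with $\nh$-bilinearity of $h$ gives $t_{n-1} \cdot (v_n \od 1_1) = (v_n \od 1_1) \cdot t_n$ inside $\cf$, i.e. $v_{n,n+1} \cdot t_n = t_{n-1} \cdot v_{n,n+1}$ after translating through \eqref{eq right F}. By Proposition \ref{prop center}, $C_n(n-1) \cong Z(NH_{n-1}) \otimes \Z[x_n] \cong \la_{n-1}[x_n]$, so write $t_n = \sum_j p_{n,j} \, x_n^j$ with $p_{n,j} \in \la_{n-1}$. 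The slide/short-strand relations (Definition \ref{def nh'}(3)) let $v_n$ pass through symmetric functions in $x_1,\dots,x_{n-1}$ and send $x_n^j$ into the new variable $r_j$ sector; the upshot is that the equation $v_{n,n+1} t_n = t_{n-1} v_{n,n+1}$, read through the faithful action $\alpha$ on $\wt R$, forces $p_{n,j} = 0$ for $j \ge 1$ and $p_{n,0} = p_{n-1,0}$ — exactly as in Lemma \ref{lem end 1}, now with the extra $x_n$-freedom killed because the frozen strand of $\cf$ has been "used up" to the right so that $x_n$ is no longer a free slot. Starting the induction from $n=1$: $t_1 \in NH_1 = \Z[x_1]$ but compatibility with $[1_1]_f = [1_1]_f \cdot 1_0$ and with $v_1 \colon 1_0 \to 1_1$ forces $t_1 \in \K$, say $t_1 = a$; then $t_n = a \cdot 1_n$ for all $n$, so $h = r(a)$.

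Conversely, the corollary already supplies $r \colon \K[x] \to \End(\cf)$ as a ring homomorphism via \eqref{eq rt nh}; one checks $r$ is injective because $r(c)([1_1]_f) = [(1_0 \od c) \cdot 1_1]_f$ detects $c \in \K[x_1] = \K[x]$ faithfully (here the single frozen strand of $\cf$, not yet saturated, carries all of $\K[x]$). Hence $r$ is the claimed isomorphism. Wait — I should reconcile this with the argument above: the point is that for $\cf = \cf^1$ there is exactly one frozen strand, contributing one free polynomial variable $x$, whereas the "extra" $x_n$ appearing at each inductive step is not frozen and gets annihilated by the $v_n$-compatibility; so the surviving invariant is precisely $\K[x]$, matching $\End(\cf^k) = \K[x_1,\dots,x_k]$-type statements for general $k$. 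I would present the argument in that order: (i) reduce $h$ to a compatible family $(t_n)$ with $t_n \in C_n(n-1)$; (ii) apply Proposition \ref{prop center} to parametrize each $t_n$; (iii) use the $v_n$-intertwining relation and faithfulness of $\alpha$ to cut the family down to scalars times a single free $x$; (iv) identify the result with the image of $r$.

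The main obstacle I anticipate is step (iii): carefully extracting, from the single relation $v_{n,n+1} t_n = t_{n-1} v_{n,n+1}$ together with the short-strand relations, the vanishing of all positive-degree $x_n$-terms and the equality of the constant terms across $n$. This is where one must be attentive that $C_n(n-1)$ is genuinely $\la_{n-1}[x_n]$ and not something larger, and that pushing $x_n^j$ through $v_n$ lands in the linearly independent $r_j$'s of $R$ (using Proposition \ref{prop basis nh0n} / the faithfulness of $\alpha$), so that no cancellation can occur. Everything else is a direct transcription of the proof of Lemma \ref{lem end 1}.
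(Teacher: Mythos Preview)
Your overall strategy is exactly what the paper intends: reduce an endomorphism $h$ to a compatible family $t_n \in C_n(n-1)$, invoke Proposition \ref{prop center} to write $t_n \in \la_{n-1}[x_n]$, and then use the short-strand compatibility to pin down the family. That part is fine and matches the paper's one-line indication.

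The gap is in what you claim the $v_n$-compatibility actually forces. You assert that the relation kills the positive powers of $x_n$ (``$p_{n,j}=0$ for $j\ge 1$'') and leaves only a scalar, so that $h=r(a)$ with $a\in\K$. But $x_n$ is a dot on the \emph{frozen} strand of $\cf$: it is precisely the variable that survives, not the one that dies. Concretely, $r(x^j)$ is the endomorphism with $t_n = x_n^j$ for every $n$, so your conclusion $t_n = a\cdot 1_n$ contradicts the injectivity of $r$ that you yourself verify two lines later. Your ``reconciliation'' paragraph notices the contradiction but does not repair the argument; saying ``one frozen strand contributes one free variable'' is the right slogan, but it is not a proof.

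What the inductive step should say is this: writing $t_n=\sum_j p_{n,j}\,x_n^{\,j}$ with $p_{n,j}\in\la_{n-1}$, the relation $t_n\cdot v_{n,n+1}=v_{n,n+1}\cdot t_{n+1}$ forces each $p_{n,j}$ to lie in $\K$ (not to vanish) and moreover $p_{n,j}=p_{n+1,j}$ for every $j$. Thus $t_n=c(x_n)$ for a single polynomial $c\in\K[x]$ independent of $n$, i.e.\ $h=r(c)$. The base case is simply $t_1\in C_1(0)=\K[x_1]$, with no further restriction at $n=1$; there is no generator $[1_0]_f$ to produce one. To carry out the key step cleanly, pass $x_{n+1}$ through $v_{n,n+1}$ using $v_{n,n+1}x_{n+1}=x_n v_{n,n+1}$, and then use the faithful action $\alpha$ (or the basis of $\nh^{n}_{n+1}$ from Proposition \ref{prop basis nh}) to see that the symmetric-function coefficients $p_{n+1,j}\in\la_n$ must in fact be scalars matching those of $t_n$. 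Once you correct the target of the induction from ``scalar'' to ``polynomial in the frozen variable,'' the rest of your outline goes through.
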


\subsection{The complex lifting $\exp(-f)$}
The goal is to lift the expansion
$$\exp(-f)=\sum\limits_{k\ge 0}(-1)^k\frac{f^k}{k!},$$
to a complex in $\mf{D}(\nh^e)$, where $1$ and $f$ are lifted to the objects $\mb$ and $\cf$, respectively.
We use certain direct summands of $\cf^k$ to lift the divided powers $\frac{f^k}{k!}$.
To define the differential, we will use the new generators with short strands.

Recall some basic facts about some idempotents of $NH_k$ as follows.
Let $w_0(k) \in S_k$ denote the longest element of $S_k$, and $\pak=\pa_{w_0(k)} \in NH_k$.
There are idempotents
$$e_k=x_1^{k-1}\cdots x_{k-1} \pak$$
in $NH_k$.
Let $e_{i,k} = 1_{i-1} \od e_2 \od 1_{k-i-1} \in NH_k$ denote the diagram obtained from $e_2$ by adding $i-1$ and $k-i-1$ vertical strands on the left and right, respectively, see figure \ref{fig alg9}.
It is easy to see that $e_k = e_{i_1, k} \cdots e_{i_l, k}$, where $s_{i_1}\cdots s_{i_l}$ is a reduced expression of $w_0(k) \in S_k$.
In other words, $e_2$ is the building block of $e_k$.

\begin{figure}[h]
\begin{overpic}
[scale=0.3]{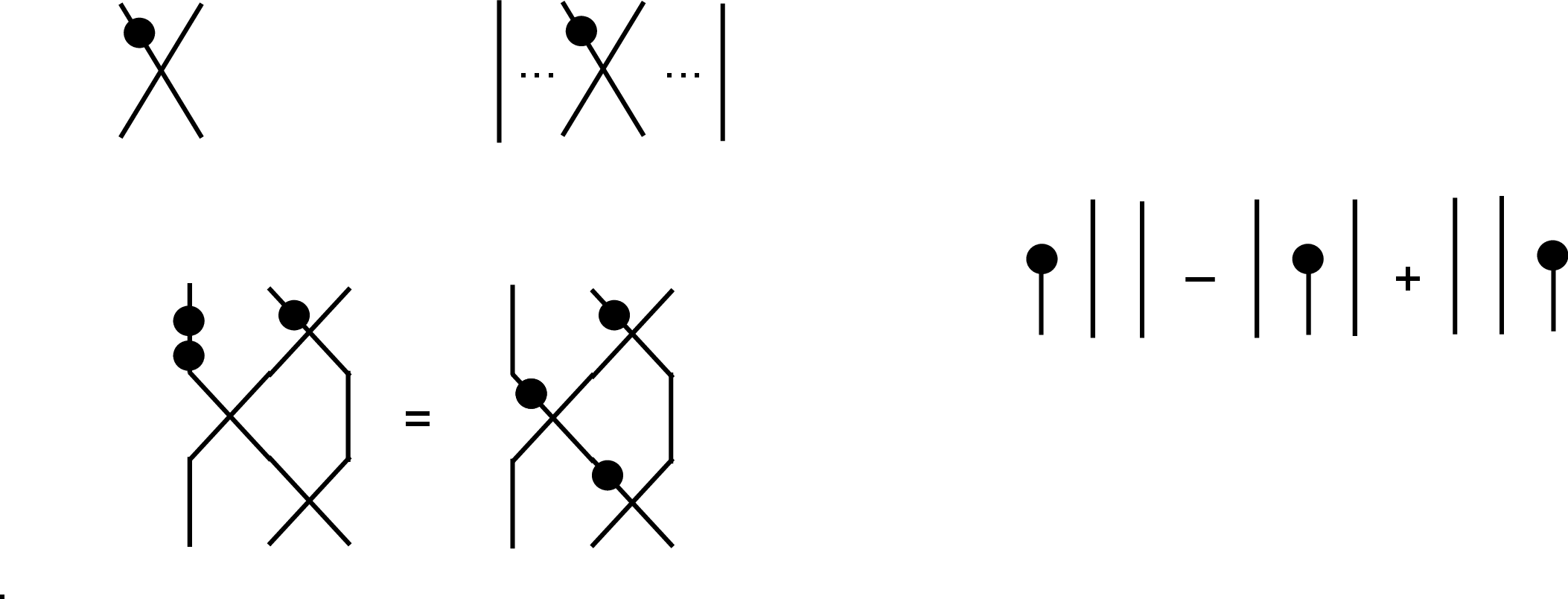}
\put(0,32){$e_2:$}
\put(22,32){$e_{i,k}:$}
\put(58,18){$u_3:$}
\put(35,26){$\scriptstyle{i}$}
\put(39,26){$\scriptstyle{i+1}$}
\end{overpic}
\caption{The diagrams for $e_2, e_{i,k}, e_3=e_{2,3}e_{1,3}e_{2,3}$, and $u_3$.}
\label{fig alg9}
\end{figure}

The idempotent $e_k$ induces an idempotent endomorphism $r(e_k)$ of the bimodule $\cf^k$.

\begin{defn} \label{def fk}
Define the $\nh$-bimodule $\cfk$ as a direct summand of $\cf^k$ corresponding to the idempotent endomorphism $r(e_k)$.
Elements of $\cfk$ are of the form $[t \cdot (1_{n-k} \od e_k)]_{f^k}$ for $t \in \nhmn$.
\end{defn}

Define an alternating sum
\begin{gather} \label{def u}
u_k = \sum\limits_{i=1}^{k}(-1)^{i-1}v_{i,k} \in \nh^{k-1}_k,
\end{gather}
for $k \ge 1$, see figure \ref{fig alg9} for $u_3$.
Then $u_k \cdot u_{k+1}=0$ by the isotopy relation of disjoint short strands.

Via the right action of $NH_k$ on $\cf^k$ in (\ref{eq rt nh}), the element $u_k$ induces a map
\begin{align} \label{eq rt u1}
\begin{array}{cccc}
r(u_k): & \cf^{k-1} & \ra & \cf^k \\
& [t]_{f^{k-1}} & \mapsto & [t \cdot (1_{n+1-k} \od u_{k})]_{f^k}
\end{array}
\end{align}
for $t \in \nhmn$, $n \ge k-1$.

Consider the restriction of $r(u_k)$ to $\cf^{(k-1)}$ followed by the projection of $\cf^k$ onto $\cfk$.
The resulting map is denoted by $d(u_k)$:
\begin{align} \label{eq rt u}
\begin{array}{cccc}
d(u_k): & \cf^{(k-1)} & \ra & \cfk \\
& [t \cdot (1_{n+1-k} \od e_{k-1})]_{f^{k-1}} & \mapsto & [t \cdot (1_{n+1-k} \od (e_{k-1}u_{k}e_{k})]_{f^k}
\end{array}
\end{align}
for $t \in \nhmn$, $n \ge k-1$.

We compute $d(u_{k+1}) \circ d(u_k)$ in the following.

\begin{lemma} \label{lem eue}
There is an equality $e_{k-1} u_k e_k=e_{k-1} u_k \in \nh^{k-1}_k$.
\end{lemma}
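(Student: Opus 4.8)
The statement to prove is $e_{k-1} u_k e_k = e_{k-1} u_k$ in $\nh^{k-1}_k$, where $u_k = \sum_{i=1}^k (-1)^{i-1} v_{i,k}$ and $e_k = x_1^{k-1}\cdots x_{k-1}\pak$ is the standard idempotent of $NH_k$. Since $e_k$ is a sum of terms $s_{i_1,k}\cdots s_{i_l,k}$-type words dressed with dots, and in particular $e_k = e_{i_1,k}\cdots e_{i_l,k}$ with each $e_{i,k}$ built from $e_2$, the strategy is to reduce to understanding how a single building block $e_{j,k}$ acts on $u_k$ from the right, and then iterate. So the first step is to establish the key local identity: for each $1\le j\le k-1$,
\begin{gather*}
u_k \cdot e_{j,k} = u_k \quad\text{whenever $u_k$ is preceded on the left by something like $e_{j,k}$ or, more simply, as an identity $e' u_k e_{j,k} = e' u_k$ for the relevant prefix $e'$.}
\end{gather*}
The honest way to see this is to compute $v_{i,k} s_{j,k}$ and $v_{i,k} x_{j,k}$ directly using the short strand relations of Definition~\ref{def nh} (or \ref{def nh'}): the sliding relations say $v_{i,k}$ commutes past $x_{j,k}$ and $\pa_{j,k}$ with appropriate index shifts, and the exchange relation $v_{i,k}\pa_{i,k} = v_{i+1,k}\pa_{i,k}$ (equivalently the slide relation through $s_{i,k}$) is exactly what makes the alternating sum $u_k$ behave like a ``totally antisymmetric'' object under the $s_{j,k}$.

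\textbf{Key computation.} Concretely, I would show that for each crossing $s_{j,k}$ (with $1\le j\le k-1$), the two adjacent terms $(-1)^{j-1}v_{j,k}$ and $(-1)^{j}v_{j+1,k}$ in $u_k$ conspire so that $u_k\cdot s_{j,k}$ differs from $u_k$ only in ways that are killed once we also multiply by the dot pattern, while the remaining terms $v_{i,k}$ for $i\ne j,j+1$ slide through $s_{j,k}$ harmlessly (relabeling themselves). More precisely, using $v_{i,k}s_{j,k} = s_{j,k\text{ or nearby}}v_{i,k}$ for $i$ far from $j$ and the exchange relation for $i\in\{j,j+1\}$, one deduces $u_k\, e_{j,k}$ absorbs into $u_k$ after accounting for the idempotent dots, exactly because $e_{j,k}$ is $e_2$ inflated and $e_2\cdot(\text{antisymmetrizer on strands }j,j+1) = e_2$. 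Since $e_k$ is a product of such $e_{j,k}$'s indexed by a reduced word for $w_0(k)$, and $e_{k-1}$ provides the matching prefix of dots/crossings on the first $k-1$ strands so that at each stage the hypothesis of the local identity is met, we get $e_{k-1}u_k e_k = e_{k-1}u_k e_{i_1,k}\cdots e_{i_l,k} = \cdots = e_{k-1}u_k$ by peeling off one building block at a time.

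\textbf{Main obstacle.} The subtle point — and where I expect to spend the most care — is the bookkeeping of index shifts and signs when a short strand $v_{i,k}$ slides past a crossing $s_{j,k}$ sitting inside $e_k$: the sliding relations change $v_{i,k}$ to $v_{i\pm1,k}$ depending on whether $i$ is above or below $j$, so the alternating sum $u_k$ does not map to itself termwise but only after a permutation-and-sign reshuffle. One must check that this reshuffle is precisely the sign of the transposition, so that $u_k \cdot s_{j,k} = \pm u_k + (\text{terms with repeated adjacent short strands})$, and that the ``repeated'' terms vanish by the isotopy relation $v_{i,k}v_{i,k-1}$-type degeneration or are annihilated by the surrounding dots in $e_k$. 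An alternative, cleaner route — which I would pursue if the direct diagrammatics get unwieldy — is to use the faithful action $\alpha$ of $\nh$ on $\wt R = \bigoplus_n R_n$ from Section~\ref{Sec basis}: compute both $\alpha(e_{k-1}u_k e_k)$ and $\alpha(e_{k-1}u_k)$ as operators $R_k \to R_{k-1}$ on the monomial basis, using that $\alpha(e_k)$ is (up to normalization) the projection onto antisymmetric polynomials composed with division, and that $\alpha(u_k)$ sends $x_k^j$ to $r_j$ with the alternating sum over positions realizing antisymmetrization; since $e_k$ already symmetrizes/antisymmetrizes in the first $k$ variables, appending it on the right of $u_k$ is idempotent-like and changes nothing after the $e_{k-1}$ on the left. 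Faithfulness of $\alpha$ (established via Propositions~\ref{prop basis nh0n} and \ref{prop basis nh}) then upgrades the operator identity to the claimed algebra identity.
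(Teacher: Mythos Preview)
Your global strategy --- factor $e_k$ as a product of the building blocks $e_{i,k}$ and peel them off one at a time, using $e_{k-1}$ on the left throughout --- is exactly the paper's approach. Where your proposal goes off track is in the local step. You phrase the key computation in terms of $u_k \cdot s_{j,k}$ and expect ``terms with repeated adjacent short strands'' that then get killed; but each summand of $u_k$ contains a single short strand, so no such terms ever appear, and $s_{j,k}$ is not the object $e_{j,k}$ is made of. The paper instead uses the nilHecke identity $e_{i,k} = 1_k + \pa_{i,k}\, x_{i+1,k}$, so the claim $e_{k-1}\, u_k\, e_{i,k} = e_{k-1}\, u_k$ reduces to
\[
e_{k-1}\, u_k\, \pa_{i,k} \;=\; 0.
\]
This is immediate: for $j \notin \{i,i+1\}$ the isotopy relations give $v_{j,k}\,\pa_{i,k} = \pa_{i',k-1}\, v_{j,k}$ for some $i'$, and $e_{k-1}\,\pa_{i',k-1} = 0$ since $e_{k-1}$ ends in $\pa_{(k-1)}$; for $j \in \{i,i+1\}$ the exchange relation $v_{i,k}\,\pa_{i,k} = v_{i+1,k}\,\pa_{i,k}$ makes the two surviving terms $(-1)^{i-1} e_{k-1} v_{i,k}\pa_{i,k}$ and $(-1)^{i} e_{k-1} v_{i+1,k}\pa_{i,k}$ cancel exactly. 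There is no sign-and-index reshuffling obstacle to overcome, and the fallback to the faithful action $\alpha$ is unnecessary.
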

\begin{proof}
We claim that $e_{k-1} u_k e_{i,k}=e_{k-1} u_k$ for all $1 \le i \le k-1$.
The lemma follows from the claim since $e_k$ can be written as a product of $e_{i, k}$'s.

To prove the claim, we compute
\begin{align*}
e_{k-1} u_k \pa_{i,k} & = \sum\limits_{j=1}^{k}(-1)^{j-1}e_{k-1} v_{j,k} \pa_{i,k} \\
& = \sum\limits_{j \neq i,i+1}(-1)^{j-1} e_{k-1} v_{j,k} \pa_{i,k} + (-1)^{i-1} e_{k-1} v_{i,k} \pa_{i,k} + (-1)^{i} e_{k-1} v_{i+1,k} \pa_{i,k}.
\end{align*}
Each term of the first summation is zero since $v_{j,k} \pa_{i,k}=\pa_{i',k}v_{j,k}$ for some $i'$ when $j \neq i,i+1$, and $e_{k-1} \pa_{i',k-1}=0$ for all $i'$.
The remaining two terms cancel each other since $v_{i,k} \pa_{i,k}=v_{i+1,k} \pa_{i,k}$ from the exchange relation in Definition \ref{def nh} (3).
So $e_{k-1} u_k \pa_{i,k}=0$.
By the nilHecke relation, $e_2=1_2 + \pa_1 x_2$ so that $e_{i,k}=1_k + \pa_{i,k} x_{i+1,k}$.
Thus $$e_{k-1} u_k e_{i,k}=e_{k-1} u_k (1_k + \pa_{i,k} x_{i+1,k})= e_{k-1} u_k 1_k= e_{k-1} u_k.$$
The lemma follows.
\end{proof}

\begin{rmk}
In general, $e_{k-1} u_k e_k \neq u_k e_k$.
\end{rmk}

The lemma above implies that
\begin{gather} \label{eq u2}
(e_{k-1} u_k e_k) \cdot (e_{k} u_{k+1} e_{k+1})=e_{k-1} u_k e_k u_{k+1} e_{k+1} = e_{k-1} u_k u_{k+1} e_{k+1}=0,
\end{gather}
where the last equality holds because $u_k u_{k+1}=0$.
Hence, $d(u_{k+1}) \circ d(u_k): \cf^{(k-1)} \ra \cf^{(k+1)}$ is zero by the definition of $d(u_k)$ in (\ref{eq rt u}).
We define a complex
\begin{gather} \label{def e^-f}
\exp(-\cf)=\left(\bigoplus\limits_{k \ge 0} \cfk[-k], d=\bigoplus\limits_{k \ge 1}d_k\right),
\end{gather}
where the components of the differential $d$ are given by $d_k=d(u_k): \cf^{(k-1)} \ra \cf^{(k)}$.

\subsection{The complex lifting $\exp(f)$}
The goal is to lift the expansion $$\exp(f)=\sum\limits_{k\ge 0}\frac{f^k}{k!},$$
to an object in $\mf{D}(\nh^e)$.
We use another direct summand of $\cf^k$ induced by an idempotent which is different from $e_k$.
The differential is induced by certain elements in the extension group $\Ext^1(\cf^k, \cf^{k-1})$.

%Let $\ove_{i,k} = 1_{i-1} \od \ove_2 \od 1_{k-i-1} \in NH_k$ denote the diagram obtained from $\ove_2$ by adding $i-1$ and $k-i-1$ vertical strands on the left and right, respectively. Then
%$$\ove_k = \prod\limits_{i_j}\ove_{i_j, k},$$ where $\prod\limits_{i_j}s_{i_j}=w_0(k) \in S_k$.
\vspace{.2cm}
In the following, we construct an $\nh$-bimodule $\cg$ in two steps.
We will show that $\cg$ is an extension of $\cf$ by $\mb=\nh$ as $\nh$-bimodules:
$$0 \ra \mb \ra \cg \ra \cf \ra 0.$$

\vspace{.1cm}
\n{\bf Step 1: The left module.}
We first define
$$\cg=\mb \oplus \cf$$
as a left $\nh$-module.
Let $[t]_1 \in \mb$, and $[t']_f \in \cf$ for $t,t' \in \nhmn$.
Then
$$t \cdot [1_n]_1=[t]_1, \qquad t' \cdot [1_n]_f=[t']_f.$$
The left $\nh$-module $\cg$ is projective, and generated by $[1_n]_1$ and $[1_{n+1}]_f$, for $n \ge 0$.

\vspace{.1cm}
\n{\bf Step 2: The right module.} The right multiplication is defined on the generators $[1_n]_1$ and $[1_{n+1}]_f$ as follows.
The summand $\mb$ is a right $\nh$-submodule of $\cg$:
$$[1_n]_1 \cdot t=[t]_1, \quad \mbox{for}~~ t \in \nh^n_{n'}.$$
The algebra $\nh$ contains $\bigoplus\limits_{n \ge 0}NH_n$ as a subalgebra.
The summand $\cf$ is a right $\bigoplus\limits_{n \ge 0}NH_n$-submodule of $\cg$:
$$[1_{n+1}]_f \cdot t=[t \od 1_1]_f=(t \od 1_1) \cdot [1_{n+1}]_f, \quad \mbox{for}~~ t \in NH_n.$$
The only nontrivial part of the definition of $\cg$ is the right multiplication on $[1_{n}]_f$ with $v_n \in \nh^{n-1}_n$, for $n \ge 1$:
\begin{gather} \label{eq rt g}
[1_{n}]_f \cdot v_n=[v_n \od 1_1]_f + [1_n]_1=[v_{n,n+1}]_f+[1_n]_1,
\end{gather}
see figure \ref{fig alg11}.
Here we use the presentation of $\nh$ in Definition \ref{def nh'}.
The algebra $\nh$ is generated by $NH_{n-1}$ and $v_n$ for $n \ge 1$.
In particular, $v_{i,n}=v_n \cdot g_{i,n}$ for some element $g_{i,n} \in NH_n$.
Define $[1_{n}]_f \cdot v_{i,n}=([1_{n}]_f \cdot v_n) \cdot g_{i,n}$.

Define the right multiplication on $[t]_f$ for $t \in \nhmn$, $b \in \nh$ as
$$[t]_f \cdot b=t \cdot ([1_n]_f \cdot b).$$
Our construction of $\cg$ is complete.

As a left $\nh$-module, $\cg$ is projective, and generated by $[1_n]_1$ and $[1_{n+1}]_f$ for $n \ge 0$.
Thus, the right multiplication on $\cg$ is determined by the right multiplication on the generators $[1_n]_1$ and $ [1_{n+1}]_f$.
By definition, it commutes with the left multiplication.

\begin{figure}[h]
\begin{overpic}
[scale=0.3]{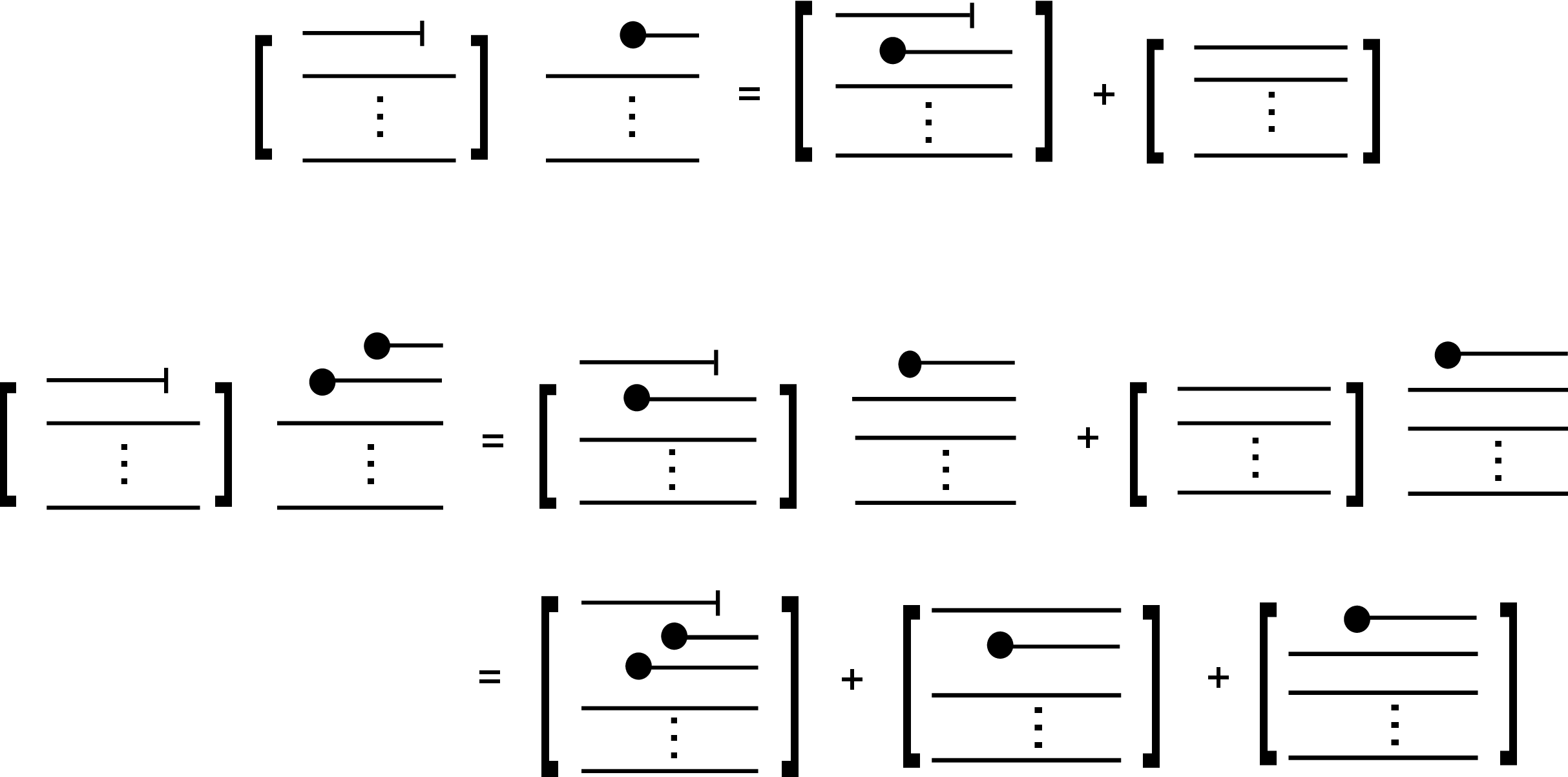}
\put(31,39){$f$}
\put(67,39){$f$}
\put(88,39){$1$}
\put(15,17){$f$}
\put(51,17){$f$}
\put(87,17){$1$}
\put(51,0){$f$}
\put(74,0){$1$}
\put(97,0){$1$}
\end{overpic}
\caption{Diagrams are rotated by $\frac{\pi}{2}$ counterclockwise from the vertical direction to the horizontal direction. The upper part is the definition $[1_{n}]_f \cdot v_n$; the lower part computes $([1_{n}]_f \cdot v_n) \cdot v_{n+1}$.}
\label{fig alg11}
\end{figure}

\begin{lemma} \label{lem rt g}
The right multiplication on $\cg$ is well-defined.
\end{lemma}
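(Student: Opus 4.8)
The plan is to verify that the right-multiplication formulas given on the generators $[1_n]_1$ and $[1_{n+1}]_f$ respect all the defining relations of $\nh$, using the presentation in Definition \ref{def nh'}. Since $\cg = \mb \oplus \cf$ is projective as a left $\nh$-module, generated by $[1_n]_1$ and $[1_{n+1}]_f$, and since the proposed right action is by construction left-linear, it suffices to check that for each defining relation $r = r'$ in $\nh^n_{n'}$ (read as an equality of diagrams with $n$ bottom and $n'$ top endpoints) and each generator $[1_n]_\ast$, we have $[1_n]_\ast \cdot r = [1_n]_\ast \cdot r'$. On the $\mb$ summand there is nothing to check: $\mb$ is literally $\nh$ as a bimodule, so the relations hold automatically. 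Thus the whole content is the check on $[1_{n+1}]_f$.

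First I would dispose of the relations not involving $v$. On $[1_{n+1}]_f$, elements $t \in NH_n \subset \nh$ act by $[1_{n+1}]_f \cdot t = [t \od 1_1]_f$; since $t \mapsto t \od 1_1$ is an algebra homomorphism $NH_n \to NH_{n+1}$ (adding a strand on the right), all idempotent relations and all nilHecke relations among the $x_{i,n}$ and $\pa_{i,n}$ for $i \le n-1$ are respected. Next, the mixed short-strand/nilHecke relations $v_n x_{j,n} = x_{j,n-1} v_n$ for $j<n$ and $v_n \pa_{j,n} = \pa_{j,n-1} v_n$ for $j<n-1$: here I would expand $[1_{n+1}]_f \cdot v_n = [v_{n,n+1}]_f + [1_n]_1$ using (\ref{eq rt g}), then act on the right by $x_{j,n}$ or $\pa_{j,n}$. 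On the $[1_n]_1$ term the relation holds as in $\nh$; on the $[v_{n,n+1}]_f$ term it reduces to the corresponding relation $v_{n,n+1} x_{j,n+1} = x_{j,n} v_{n,n+1}$ (resp.\ for $\pa$) already valid in $\nh$ by Definition \ref{def nh} (3). One should also check consistency of the auxiliary definition $[1_n]_f \cdot v_{i,n} := ([1_n]_f \cdot v_n) \cdot g_{i,n}$, i.e.\ that it does not depend on the choice of $g_{i,n}$ with $v_{i,n} = v_n g_{i,n}$; this follows once the relation (F) check below is done, since the ambiguity in $g_{i,n}$ is controlled by relation (F), which is precisely what forces $\eta_n$ to be well-defined on $V_n$.

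The genuinely nontrivial relation — and the main obstacle — is \textbf{relation (F)}: $v_{n} v_{n+1} = v_n v_{n+1} s_{n}$ in $\nh$ (with $s_n = s_{n,n+1}$), which must be reflected as $([1_{n+1}]_f \cdot v_n) \cdot v_{n+1} = ([1_{n+1}]_f \cdot v_n) \cdot v_{n+1} \cdot s_{n,n+1}$ as endomorphisms applied to generators — more precisely, I must verify that iterating (\ref{eq rt g}) twice gives something symmetric under $s_n$. Concretely, $[1_{n+1}]_f \cdot v_n = [v_{n,n+1}]_f + [1_n]_1$, and then acting by $v_{n+1}$ one computes $([1_{n+1}]_f \cdot v_n)\cdot v_{n+1} = [v_{n,n+1}]_f \cdot v_{n+1} + [1_n]_1 \cdot v_{n+1}$. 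The second summand lies in $\mb$ where relation (F) holds. For the first summand I would rewrite $[v_{n,n+1}]_f \cdot v_{n+1} = v_{n,n+2}\cdot([1_{n+2}]_f \cdot v_{n+1})$ (pulling the diagram $v_{n,n+1}$ out on the left), apply (\ref{eq rt g}) again, and collect terms; the claim is that the resulting element of $\mb \oplus \cf$ is fixed by right multiplication by $s_{n,n+1}$. On the $\cf$-component this is exactly the relation (F) already available in $\nh$ (figure \ref{fig alg4}); on the $\mb$-component one uses $s_n \pa_n = \pa_n$ and the computation (\ref{def s}) of $s_{1,2}$ together with Lemma \ref{lem s}, plus the fact that the "new" contribution $[1_n]_1$ produced at each application of (\ref{eq rt g}) enters symmetrically. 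This bookkeeping — keeping track of which $[1_\ast]_1$ terms are generated and checking their $s_n$-invariance — is where the care is needed; the diagrammatic computation in figure \ref{fig alg11} (the lower part, computing $([1_n]_f \cdot v_n)\cdot v_{n+1}$) is exactly this check. Once relation (F) is confirmed on generators, left-linearity propagates it to all of $\cg$, and the lemma follows.
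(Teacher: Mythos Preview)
Your approach is the same as the paper's: reduce to the generators $[1_n]_f$, note that the $NH_n$-relations hold automatically, verify the two isotopy relations $v_n x_{j,n}=x_{j,n-1}v_n$ and $v_n\pa_{j,n}=\pa_{j,n-1}v_n$, and then carry out the main computation for relation~(F) by applying~(\ref{eq rt g}) twice and checking $s_{n,n+1}$-invariance.

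That said, the write-up has systematic index slips that would need fixing. Throughout the relation~(F) paragraph you write $[1_{n+1}]_f\cdot v_n$, but $[1_{n+1}]_f$ has right weight $n$ while $v_n\in\nh^{n-1}_n$ has top $n-1$, so this product vanishes; the correct generator is $[1_n]_f$, exactly as in~(\ref{eq rt g}). Similarly, your rewriting $[v_{n,n+1}]_f\cdot v_{n+1}=v_{n,n+2}\cdot([1_{n+2}]_f\cdot v_{n+1})$ should read $v_{n,n+1}\cdot([1_{n+1}]_f\cdot v_{n+1})$; carrying this through gives $[v_{n,n+1}v_{n+1,n+2}]_f+[v_{n,n+1}]_1$, and combined with $[1_n]_1\cdot v_{n+1}=[v_{n+1}]_1$ one obtains the paper's expression $[v_{n,n+1}v_{n+1,n+2}]_f+[v_{n,n+1}+v_{n+1}]_1$. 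Finally, your justification of $s_{n,n+1}$-invariance on the $\mb$-component via ``$s_n\pa_n=\pa_n$'' is not the relevant mechanism: what is actually used is the slide relation (Lemma~\ref{lem s} / figure~\ref{fig alg4}), which says that right multiplication by $s_{n,n+1}$ swaps $v_{n,n+1}$ and $v_{n+1,n+1}=v_{n+1}$, so the sum $v_{n,n+1}+v_{n+1}$ is fixed. The remark about the ambiguity in $g_{i,n}$ is a red herring once you commit to the presentation of Definition~\ref{def nh'}: there $v_{i,n}$ is not a generator, so no separate consistency check is needed.
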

\begin{proof}
Since the action of $NH_n$ on $\cg$ is the ordinary multiplication on $\nh$, we only have to check the relations involving $v_n$ in Definition \ref{def nh'} (3).

We check the isotopy relation of a short strand with a dot
\begin{align*}
([1_{n}]_f \cdot v_n) \cdot x_{j,n} &=([v_{n,n+1}]_f+[1_n]_1) \cdot x_{j,n} \\
&=[v_{n,n+1} \cdot (x_{j,n}\od 1_1)]_f+[x_{j,n}]_1 \\
&=[x_{j,n} \cdot v_{n,n+1}]_f+[x_{j,n}]_1 \\
&=([1_{n}]_f \cdot x_{j,n-1}) \cdot v_n.
\end{align*}
The proof of the isotopy relation of a short strand with a crossing is similar, and left to the reader.

We check the relation (F) in Definition \ref{def nh'} (3): $v_{n} v_{n+1}=v_{n} v_{n+1} s_{n,n+1}$
\begin{align*}
([1_{n}]_f \cdot v_n) \cdot v_{n+1} &=([v_{n,n+1}]_f+[1_n]_1) \cdot v_{n+1} \\
&=[v_{n,n+1} \cdot 1_{n+1}] \cdot v_{n+1}+[v_{n+1}]_1 \\
&=[v_{n,n+1} \cdot v_{n+1,n+2}]_f+[v_{n,n+1} \cdot 1_{n+1}]_1+[v_{n+1}]_1 \\
&=[v_{n,n+1} \cdot v_{n+1,n+2}]_f+[v_{n,n+1}+v_{n+1}]_1,
\end{align*}
see figure \ref{fig alg11}.
Recall that $s_{n,n+1} \in NH_{n+1}$ exchanges $v_{n+1}$ and $v_{n,n+1}$.
So
\begin{align*}
([1_{n}]_f \cdot v_n \cdot v_{n+1}) \cdot s_{n,n+1}&=[v_{n,n+1} \cdot v_{n+1,n+2}]_f \cdot s_{n,n+1} +[v_{n,n+1}+v_{n+1}]_1 \cdot s_{n,n+1} \\
&= [v_{n,n+1} \cdot v_{n+1,n+2} \cdot (s_{n,n+1} \od 1_1)]_f +[v_{n,n+1}+v_{n+1}]_1 \\
&= [v_{n,n+1} \cdot v_{n+1,n+2}]_f+[v_{n,n+1}+v_{n+1}]_1.
\end{align*}
This proves that the right multiplication on $\cg$ is well-defined.
\end{proof}

%We show that $\cg$ is a nontrivial extension in the following. There is a short exact sequence of $\nh$-bimodules:
%$$0 \ra \mb \ra \cg \ra \cf \ra 0.$$
%By tensoring with the left projective $\nh$-module $P_n$, we get a short exact sequence of left $\nh$-modules:
%$$0 \ra P_n \ra \cg \ot P_n \ra P_{n+1} \ra 0.$$ So $\cg \ot P_n \cong P_n \oplus P_{n+1}$,
%where $[1_n]_1$ and $[1_{n+1}]_f$ are mapped to $1_n \in P_n$ and $1_{n+1} \in P_{n+1}$, respectively.
%Consider the case of $n=0$.
%The right multiplication with $v_1$ induces a map: $r(v_1): P_0 \ra P_1$.
%Taking the tensor product, we get
%\begin{align} \label{eq g nontrivial}
%\begin{array}{cccl}
%1_{\cg} \ot r(v_1): & \cg \ot P_0 & \ra & \cg \ot P_1 \\
%& [1_0]_1 & \mapsto & [1_0]_1 \cdot v_1 = [v_1]_1 \\
%& [1_1]_f & \mapsto & [1_1]_f \cdot v_1 = [v_{1,2}]_f+[1_1]_1.
%\end{array}
%\end{align}

The $\nh$-bimodule $\cg$ fits into a short exact sequence of $\nh$-bimodules:
$$0 \ra \mb \ra \cg \ra \cf \ra 0.$$

\begin{lemma} \label{lem g nontriv}
The extension $\cg$ is not split.
\end{lemma}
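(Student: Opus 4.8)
The plan is to show that the short exact sequence $0 \ra \mb \ra \cg \ra \cf \ra 0$ does not split by exhibiting a structural obstruction: if it split, then $\cg \cong \mb \oplus \cf$ \emph{as $\nh$-bimodules}, and I will derive a contradiction from the way $v_n$ acts on the right on the generator $[1_n]_f$. Concretely, the defining formula \eqref{eq rt g} says
\[
[1_n]_f \cdot v_n = [v_{n,n+1}]_f + [1_n]_1,
\]
so the component of this product landing in the $\mb$-summand is the \emph{nonzero} element $[1_n]_1 \in \mb$. In a split extension the bimodule projection $\cg \ra \cf$ would admit a bimodule section $\sigma : \cf \ra \cg$, and $\sigma([1_n]_f)$ would have to be a lift of $[1_n]_f$ of the form $[1_n]_f \oplus c_n$ for some $c_n \in 1_n \,\mb\, 1_n = Z(NH_n) \cong \la_n$ (using the identification of the centers of the nilHecke algebras with symmetric functions, as in Lemma~\ref{lem end 1}).

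The key computation is then to apply the right $\nh$-action to $\sigma([1_n]_f)$ in two ways. First I would use that $\sigma$ is a right module map: $\sigma([1_n]_f \cdot v_n) = \sigma([1_n]_f) \cdot v_n$. The left side, after applying $\sigma$ to $[v_{n,n+1}]_f + [1_n]_1 = [v_{n,n+1}]_f$ in $\cf$ (note $[1_n]_1 \mapsto 0$ under $\cg \ra \cf$, so this element of $\cf$ is just $[v_{n,n+1}]_f$, equivalently $v_n \cdot [1_{n+1}]_f$ in the left-module structure), gives $v_n \cdot \sigma([1_{n+1}]_f) = v_n \cdot \big([1_{n+1}]_f \oplus c_{n+1}\big) = [v_{n,n+1}]_f \oplus (v_n c_{n+1})$, where $v_n c_{n+1} \in \nh^{n-1}_n$ lies in the $\mb$-summand. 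The right side, using \eqref{eq rt g} and that $c_n$ is central, is $\big([1_n]_f \oplus c_n\big)\cdot v_n = [v_{n,n+1}]_f \oplus \big([1_n]_1 + c_n \cdot v_n\big)$, where again I use that right multiplication by $c_n \in NH_n$ on the $\mb$-summand and the relation $c_n v_n = v_n (\text{something})$ via the short-strand sliding relations in Definition~\ref{def nh'}(3). Matching the $\mb$-components yields an identity of the shape
\[
v_n \cdot c_{n+1} \;=\; [1_n]_1 \;+\; (\text{image of } c_n) \quad \text{in } \mb = \nh,
\]
i.e. $v_n^0$-free: the term $[1_n]_1 = 1_n \in NH_n \subset \nh$ would have to be absorbed. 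Starting from $n=0$ (where $\mb$ in degree $0$ is $NH_0 = \K$ and $\cf$ contributes nothing below $n=1$), one gets $c_1 \cdot 1$-type constraints forcing a contradiction, because the defect $1_n$ can never be written as $v_n c_{n+1}$ minus a central element — $v_n c_{n+1}$ always has a short strand and lies in $\nh^{n-1}_n$, which meets $NH_n$ only in $0$ when we track the appropriate summand. Equivalently, projecting onto the quotient $\cg / (\cf) \cong \mb$, the induced right-action defect is the class of the cocycle $n \mapsto 1_n$, which is visibly not a coboundary.

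The main obstacle is bookkeeping: making precise the claim that the ``$\mb$-component of $[1_n]_f \cdot v_n$'' is a genuine, split-independent invariant. The clean way to do this is cohomological — identify $\cg$ with the class in $\Ext^1_{\nh^e}(\cf, \mb)$ determined by the $2$-cocycle sending the right action of $v_n$ on the generator $[1_n]_f$ to $1_n \in \mb$, and observe this cocycle is nonzero in $\Ext^1$ because any bimodule homomorphism $\cf \ra \mb$ (which would provide the trivializing coboundary) must kill the generators $[1_n]_f$: indeed $\Hom_{\nh^e}(\cf, \mb) = 0$ since a map would send $[1_n]_f$ into $1_n\mb 1_{n-1} = 0$ as $\nh^n_{n-1}=0$. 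That vanishing of $\Hom$, together with the explicit nonzero cocycle, is the cleanest route and avoids chasing the sliding relations by hand; I would present the argument that way, with the direct action-computation above as the concrete verification that the cocycle is the one defining $\cg$.
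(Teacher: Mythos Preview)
Your final paragraph lands on the right idea and is essentially the paper's argument: the key fact is $\nh^{n}_{n-1}=0$, which forces any bimodule section $\sigma:\cf\to\cg$ to send $[1_n]_f$ to its canonical lift, and then the defining formula \eqref{eq rt g} gives an immediate contradiction. The paper does exactly this (phrased with a splitting isomorphism $\psi:\cg\to\mb\oplus\cf$ rather than a section): from $\nh^{n}_{n-1}=0$ one gets $\psi([1_n]_f)=[1_n]'_f$, and then $\psi([1_n]_f\cdot v_n)=[v_{n,n+1}]'_f+[1_n]'_1$ while $\psi([1_n]_f)\cdot v_n=[v_{n,n+1}]'_f$. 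Three lines.

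The middle of your proposal, however, is wrong and should be discarded. You place the correction term $c_n$ in $1_n\,\mb\,1_n$ and then identify this with $Z(NH_n)$. Both are mistakes. Since $[1_n]_f\cdot 1_{n-1}=[1_n]_f$ in $\cf$, and the right action of $1_{n-1}$ on the $\mb$-summand of $\cg$ is ordinary multiplication in $\nh$, the correction must lie in $1_n\,\mb\,1_{n-1}=\nh^{n}_{n-1}$, not $1_n\,\mb\,1_n$. (Separately, $1_n\,\mb\,1_n=\nh^n_n\cong NH_n$, not its center; centrality is relevant for bimodule \emph{endomorphisms}, not for elements of $\mb$.) Once you use the correct space, $c_n=0$ is forced immediately, and the entire detour through sliding relations, the recursion ``$v_n c_{n+1}=1_n+\cdots$'', and the $n=0$ base case evaporates: you already have $\sigma([1_n]_f)=[1_n]_f$, and comparing $\sigma([1_n]_f)\cdot v_n=[v_{n,n+1}]_f+[1_n]_1$ with $\sigma([1_n]_f\cdot v_n)=\sigma([v_{n,n+1}]_f)=[v_{n,n+1}]_f$ finishes it.

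One more caution on the cohomological packaging: the vanishing $\Hom_{\nh^e}(\cf,\mb)=0$ by itself does not prove non-splitness; it only says there are no coboundaries, so you still need the explicit check that the cocycle is nonzero. That check is precisely the two-line computation above, which is all the paper writes down.
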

\begin{proof}
Suppose $\cg$ is split, i.e. there is a commutative diagram
$$
\xymatrix{
 0 \ar[r] & \mb \ar[r] \ar[d]^{id} &  \cg \ar[r] \ar[d]^{\psi} & \cf \ar[d]^{id} \ar[r] & 0. \\
 0 \ar[r] & \mb \ar[r]  & \mb\oplus\cf  \ar[r] & \cf \ar[r] & 0. \\
}$$
of $\nh$-bimodules.
Let $[1_{n-1}]'_1$ and $[1_n]'_f$ for $n \ge 1$ denote the generators of $\mb \oplus \cf$.
The commutative diagram implies that $\psi([1_{n-1}]_1)=[1_{n-1}]'_1$, and $\psi([1_n]_f)=[1_n]'_f+[t]'_1$ for some $t \in \nh^{n}_{n-1}$.
Since $\nh^{n}_{n-1}=0$, we have $\psi([1_n]_f)=[1_n]'_f$.
As a left $\nh$-module, $\cg$ is generated by $[1_{n-1}]_1$ and $[1_n]_f$.
So $\psi([t]_f)=\psi(t \cdot [1_n]_f)=t \cdot \psi([1_n]_f)=t \cdot [1_n]'_f=[t]'_f$.
Similarly, $\psi([t]_1)=[t]'_1$.
On the other hand,
$$\psi([1_n]_f \cdot v_n)=\psi([v_{n,n+1}]_f+[1_n]_1)=[v_{n,n+1}]'_f+[1_n]'_1, \qquad \psi([1_n]_f) \cdot v_n=[1_n]'_f \cdot v_n=[v_{n,n+1}]'_f.$$
Thus $\psi$ is not a map of right $\nh$-modules. This is a contradiction.
\end{proof}

\vspace{.2cm}
The extension $\cg$ gives rise to a morphism $\wtv_1 \in \Hom(\cf,\mb[1])$ in the derived category $\mf{D}(\nh^e)$.
We write $\Hom^1(\cf,\mb)$ for $\Hom(\cf,\mb[1])$.
Lemma \ref{lem g nontriv} implies that $\wtv_1 \neq 0 \in \Hom^1(\cf,\mb)$.

Define $$\wtv_{i,n}=1_{\cf^{i-1}} \ot \wtv_1 \ot 1_{\cf^{n-i}} \in \Hom^1(\cf^n,\cf^{n-1}),$$
for $1 \le i \le n$.

Two elements $\wtv_{1,2}$ and $\wtv_{2,2}$ correspond to two extensions $\cgf$ and $\cfg$ of $\cf^2$ by $\cf$:
$$\wtv_{1,2}: \qquad 0 \ra \cf \ra \cgf \ra \cf^2 \ra 0,$$
$$\wtv_{2,2}: \qquad 0 \ra \cf \ra \cfg \ra \cf^2 \ra 0.$$

Recall $s_{1,2} \in NH_2$ in (\ref{def s}) and the map $r(s_{1,2}): \cf^2 \ra \cf^2$ of $\nh$-bimodules in (\ref{eq rt nh}).

\begin{lemma} \label{lem ext rel}
There exists a map $\phi: \cgf \ra \cfg$ of $\nh$-bimodules such that the following diagram commutes
$$
\xymatrix{
\wtv_{1,2}: & 0 \ar[r] & \cf \ar[r] \ar[d]^{id} & \cgf \ar[r] \ar[d]^{\phi} & \cf^2 \ar[d]^{r(s_{1,2})} \ar[r] & 0. \\
\wtv_{2,2}: & 0 \ar[r] & \cf \ar[r]  & \cfg \ar[r] & \cf^2 \ar[r] & 0. \\
}$$
\end{lemma}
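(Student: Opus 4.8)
The plan is to construct $\phi$ explicitly as a morphism of left $\nh$-modules and then force it to be right $\nh$-linear. Each $\cf^k\cong\bigoplus_{n\ge k}\nh\,1_n$ is projective as a left $\nh$-module, so the two short exact sequences exhibit $\cgf$ and $\cfg$ as extensions of $\cf^2$ by $\cf$, hence these are left-projective and split, as left modules, as $\cf\oplus\cf^2$, where the copy of $\cf$ is $\mb\ot\cf$, resp.\ $\cf\ot\mb$. With respect to these splittings, any $\phi$ fitting into the diagram of the lemma is ``lower triangular'': it is the identity on the copy of $\cf$, it induces $r(s_{1,2})$ on the quotient $\cf^2$, and the only remaining datum is a left-module homomorphism $\psi\colon\cf^2\ra\cf$. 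So I would choose $\psi$, define $\phi$ by these pieces on the left-module generators of the two summands — this automatically gives a well-defined left-module map making both squares of the diagram commute — and then pin $\psi$ down by demanding right-linearity. Equivalently, this step is the assertion that $\wtv_{1,2}$ and $r(s_{1,2})^{*}\wtv_{2,2}$ represent the same class in $\Ext^1(\cf^2,\cf)$, with $\psi$ the witnessing homotopy.

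To check right $\nh$-linearity I would invoke the presentation of $\nh$ in Definition \ref{def nh'}: it suffices to test compatibility of $\phi$ with right multiplication by the generators $x_{i,n}$, $\pa_{i,n}$, and $v_n$. For $x_{i,n}$ and $\pa_{i,n}$ no short strand is involved, the right actions on $\cgf$ and $\cfg$ are ``internally nilHecke'', and compatibility follows as soon as $\psi$ is itself equivariant for $x_{i,n},\pa_{i,n}$, which leaves only a short explicit list of candidate $\psi$'s built from short strands. The substantive check is compatibility with right multiplication by $v_n$. Here the two sides genuinely differ: on $\cfg$ the $v_n$-action hits the outer $\cg$-factor and directly produces the extension term $[1_n]_1$ of (\ref{eq rt g}), while on $\cgf$ such a term appears only after one slides a short strand across the internal tensor factor, and by the slide relation together with relation (F) (figure \ref{fig alg4}) that sliding drags a circle crossing $s_{1,2}$ along with it. Reconciling the two computations is exactly what $\psi$ must absorb; doing so simultaneously pins $\psi$ down and completes the verification. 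By naturality it is enough to perform the computation on the generators, and the smallest case — two frozen strands and no free strands — already exhibits all the subtleties.

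I expect the main obstacle to be precisely this final computation: keeping track of which frozen strand belongs to which tensor factor, of how $r(s_{1,2})$ permutes them, and of how the non-split structure of $\cg$ interacts with pushing short strands through $\ot_{\nh}$, all while fixing the exact form and sign of $\psi$. Once $\psi$ is correctly identified, each relation reduces to a routine diagram manipulation using (\ref{eq rt g}), the slide relation, relation (F), and the nilHecke relations, and commutativity of the two squares holds by construction. Finally I would note that the resulting $\phi$ is automatically an isomorphism — by the five lemma together with $s_{1,2}^{2}=1_2$ — although the lemma as stated asks only for existence.
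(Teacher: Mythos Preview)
Your approach is correct and essentially the same as the paper's: define $\phi$ on the left-module generators $[1_n]_f^{gf}$, $[1_{n+1}]_{f^2}^{gf}$ and verify right $v_n$-compatibility by direct computation. One simplification you will find when you carry it out: the paper takes $\psi=0$, setting $\phi([1_{n+1}]_{f^2}^{gf})=[s_{n,n+1}]_{f^2}^{fg}$ with no $\cf$-component --- the $s_{n,n+1}$ dragged into the $\cf$-term of $[1_{n+1}]_{f^2}^{gf}\cdot v_n$ by the slide relation is already matched on the $\cfg$ side by the $s_{n,n+1}$ built into $r(s_{1,2})$, so there is nothing left for $\psi$ to absorb.
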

\begin{proof}
The left projective $\nh$-module $\cgf \cong \cf \oplus \cf^2$ is generated by elements
$$[1_{n}]_{f}^{gf}:=[1_{n}]_1^g \ot [1_{n}]_f, \qquad [1_{n+1}]_{f^2}^{gf}:=[1_{n+1}]_f^g \ot [1_{n}]_f,$$
for $n \ge 1$. Similarly, the left projective $\nh$-module $\cfg \cong \cf \oplus \cf^2$ is generated by elements
$$[1_{n}]_f^{fg}:=[1_{n}]_f \ot [1_{n-1}]_1^g, \qquad [1_{n+1}]_{f^2}^{fg}:=[1_{n+1}]_f \ot [1_{n}]_f^g,$$
for $n \ge 1$.
Define a map $\phi: \cgf \ra \cfg$ of left $\nh$-modules on the generators as
$$\phi([1_{n}]_{f}^{gf})=[1_{n}]_f^{fg}, \qquad \phi([1_{n+1}]_{f^2}^{gf})=[s_{n,n+1}]_{f^2}^{fg},$$
see figure \ref{fig alg14}.
The map $\phi$ makes the diagram commute.
\begin{figure}[h]
\begin{overpic}
[scale=0.25]{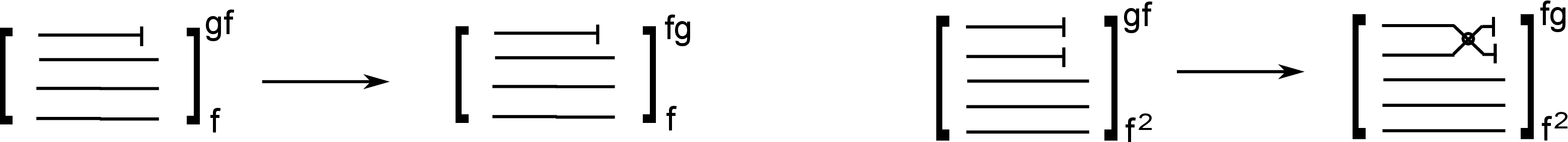}
\put(78,6){$\phi$}
\put(20,6){$\phi$}
\end{overpic}
\caption{The map $\phi$ defined on the generators $[1_{n}]_{f}^{gf}$ and $[1_{n+1}]_{f^2}^{gf}$.}
\label{fig alg14}
\end{figure}

We need to show that $\phi$ is a map of right $\nh$-modules.
It is true when restricting to the right $\nh$-submodule $\cf$ of $\cgf$.
It remains to show that $\phi([1_{n+1}]_{f^2}^{gf} \cdot v_n)=\phi([1_{n+1}]_{f^2}^{gf}) \cdot v_n$.
We compute
\begin{align*}
[1_{n+1}]_{f^2}^{gf} \cdot v_n & = [1_{n+1}]_f^g \ot ([1_{n}]_f \cdot v_n) \\
& = [1_{n+1}]_f^g \ot [v_n \od 1_1]_f \\
& = ([1_{n+1}]_f^g \cdot v_{n,n+1} ) \ot [1_{n+1}]_f \\
& = ([1_{n+1}]_f^g \cdot v_{n+1} \cdot s_{n,n+1} ) \ot [1_{n+1}]_f \\
& = (([v_{n+1} \od 1_1]_f^g + [1_{n+1}]_1^g) \cdot s_{n,n+1} ) \ot [1_{n+1}]_f \\
& = ([v_{n+1,n+2} \cdot s_{n,n+2}]_f^g + [s_{n,n+1}]_1^g ) \ot [1_{n+1}]_f \\
& = ([v_{n,n+2}]_f^g + [s_{n,n+1}]_1^g ) \ot [1_{n+1}]_f \\
& = [v_{n,n+2}]_{f^2}^{gf} + [s_{n,n+1}]_f^{gf}.
\end{align*}
So $\phi([1_{n+1}]_{f^2}^{gf} \cdot v_n)=[v_{n,n+2} \cdot s_{n,n+1}]_{f^2}^{fg} + [s_{n,n+1}]_f^{fg}$.
On the other hand,
\begin{align*}
\phi([1_{n+1}]_{f^2}^{gf}) \cdot v_n & = [s_{n,n+1}]_{f^2}^{fg} \cdot v_n \\
& = [s_{n,n+1}]_f \ot ([1_{n}]_f^g \cdot v_n) \\
& = [s_{n,n+1}]_f \ot ([v_{n,n+1}]_f^g+[1_n]_1^g) \\
& = ([s_{n,n+1}]_f \cdot v_{n,n+1}) \ot [1_{n+1}]_f^g + [s_{n,n+1}]_f^{fg} \\
& = [s_{n,n+1} \cdot v_{n,n+2}]_{f^2}^{fg} + [s_{n,n+1}]_f^{fg}.
\end{align*}
Since $v_{n,n+2} \cdot s_{n,n+1}=s_{n,n+1} \cdot v_{n,n+2}$, it follows that
$\phi([1_{n+1}]_{f^2}^{gf} \cdot v_n)=\phi([1_{n+1}]_{f^2}^{gf}) \cdot v_n$.
A graphic counterpart of this computation is depicted in figure \ref{fig alg12}.
\end{proof}

\begin{figure}[h]
\begin{overpic}
[scale=0.25]{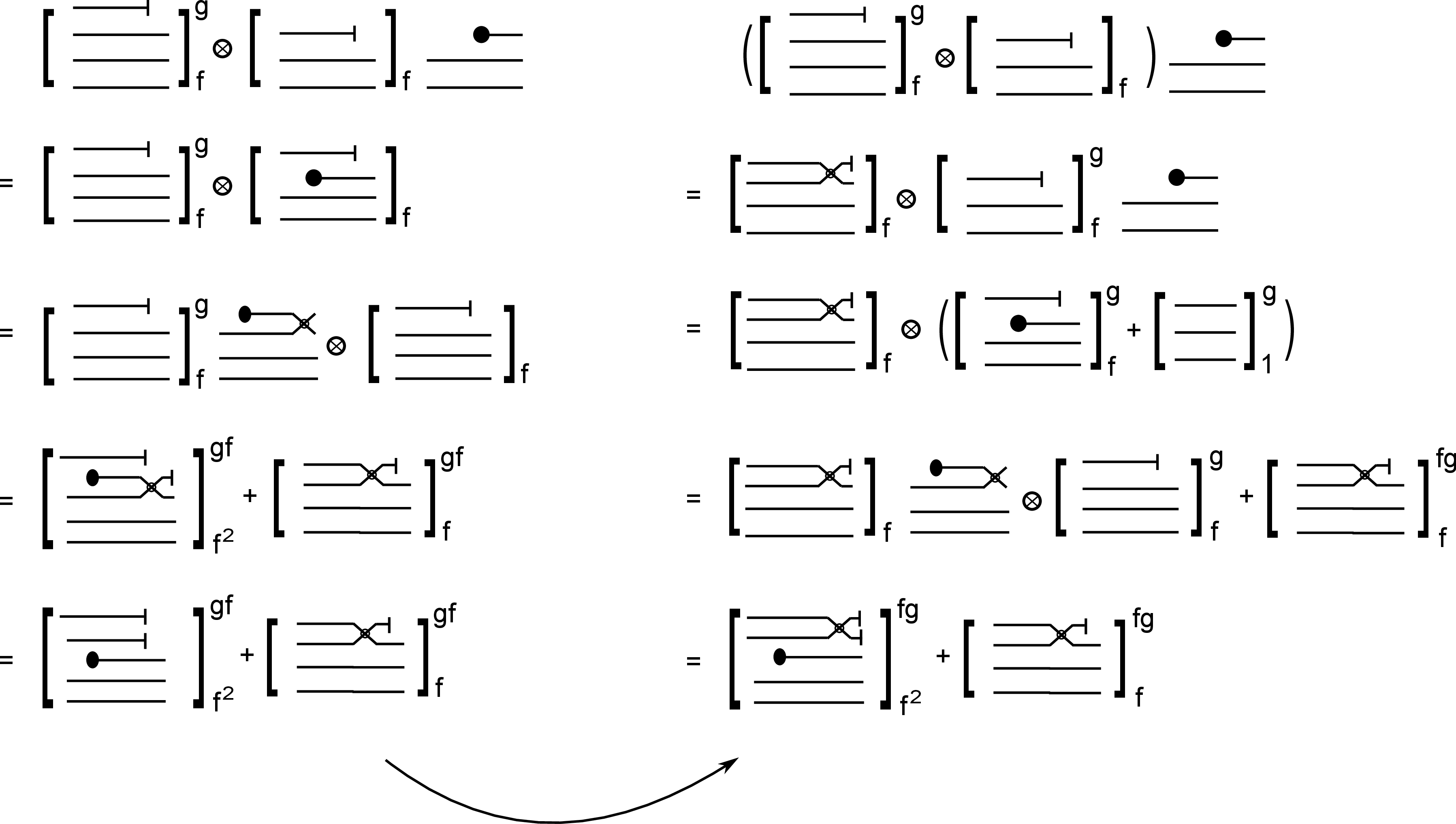}
\put(48,52){$\phi$}
\put(37,3){$\phi$}
\end{overpic}
\caption{The pictures compute $\phi([1_{n+1}]_{f^2}^{gf} \cdot v_n)$ and $\phi([1_{n+1}]_{f^2}^{gf}) \cdot v_n$ for $n=3$ on the left and right, respectively.}
\label{fig alg12}
\end{figure}

Recall that $r(s_{1,2}) \in \Hom^0(\cf^2,\cf^2)$, and $\wtv_{1,2} \in \Hom^1(\cf^2,\cf)$.
So $\wtv_{1,2} \circ r(s_{1,2}) \in \Hom^1(\cf^2,\cf)$ corresponds to another extension of $\cf^2$ by $\cf$, denoted by $s(\cgf)$.
The lemma above shows that the extensions $s(\cgf)$ and $\cfg$ are equivalent.

\begin{cor} \label{cor ext rel}
The equalities $\wtv_{2,2}=\wtv_{1,2}\circ r(s_{1,2})$, and $\wtv_{2,2} \circ r(\pa_{1,2})=-\wtv_{1,2}\circ r(\pa_{1,2})$ hold in $\Hom^1(\cf^2,\cf[1])$.
\end{cor}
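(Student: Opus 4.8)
The plan is to read off the first identity from Lemma~\ref{lem ext rel} via the standard correspondence between Yoneda extension classes and morphisms into a shift, and then to obtain the second identity by a short formal manipulation with the right $NH_2$-action $r(-)$; no new geometric input is needed.

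First I would recall that, since $\cf$ and $\cf^2$ are concentrated in cohomological degree zero, $\Hom^1(\cf^2,\cf)=\Hom(\cf^2,\cf[1])$ is the Yoneda group $\Ext^1_{\nh^e}(\cf^2,\cf)$, and that under this identification a class $\xi$ corresponds to a short exact sequence $0\to\cf\to E_\xi\to\cf^2\to 0$ in such a way that, for $g\in\Hom^0(\cf^2,\cf^2)$, the composite $\xi\circ g$ corresponds to the pullback of $E_\xi$ along $g$. The commutative ladder of Lemma~\ref{lem ext rel}, with identity on the sub $\cf$, the map $\phi$ in the middle, and $r(s_{1,2})$ on the quotient $\cf^2$, exhibits $\cgf$ (class $\wtv_{1,2}$) as the pullback of $\cfg$ (class $\wtv_{2,2}$) along $r(s_{1,2})$, so that $\wtv_{1,2}=\wtv_{2,2}\circ r(s_{1,2})$. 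Since $s_{1,2}s_{1,2}=1_2$ we have $r(s_{1,2})\circ r(s_{1,2})=r(1_2)=\mathrm{id}_{\cf^2}$, and composing the previous equation on the right with $r(s_{1,2})$ gives $\wtv_{1,2}\circ r(s_{1,2})=\wtv_{2,2}$, which is the first identity.

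For the second identity I would compose the first with $r(\pa_{1,2})$ on the right, obtaining
$$\wtv_{2,2}\circ r(\pa_{1,2})=\wtv_{1,2}\circ r(s_{1,2})\circ r(\pa_{1,2}).$$
Since $r(c)$ is right multiplication by $c$ on the frozen strands of $\cf^2$, one has $r(c_1)\circ r(c_2)=r(c_2c_1)$, so $r(s_{1,2})\circ r(\pa_{1,2})=r(\pa_{1,2}s_{1,2})$. The relation $\pa_{1,2}s_{1,2}=-\pa_{1,2}$ from~(\ref{eq s cr}) gives $r(\pa_{1,2}s_{1,2})=-r(\pa_{1,2})$, and hence $\wtv_{2,2}\circ r(\pa_{1,2})=-\wtv_{1,2}\circ r(\pa_{1,2})$.

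The one point demanding care is the variance bookkeeping in the first step: one must check that the commutative square of Lemma~\ref{lem ext rel} is read as a pullback along $r(s_{1,2})$ (equivalently, as precomposition in $\mf{D}(\nh^e)$) rather than as a pushout, and that the rule $r(c_1)\circ r(c_2)=r(c_2c_1)$ is applied with the orientation that makes $\pa_{1,2}s_{1,2}=-\pa_{1,2}$, not $s_{1,2}\pa_{1,2}=\pa_{1,2}$, the relevant relation. Both are routine once the conventions fixed earlier in the paper are tracked carefully, so I expect no genuine obstacle.
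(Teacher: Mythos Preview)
Your proposal is correct and follows essentially the same route as the paper's proof: the first identity is read off from Lemma~\ref{lem ext rel} via the Yoneda interpretation of extension classes (the paper just says ``directly follows''), and the second is obtained by precomposing with $r(\pa_{1,2})$ and using $r(s_{1,2})\circ r(\pa_{1,2})=r(\pa_{1,2}\,s_{1,2})=-r(\pa_{1,2})$ from~(\ref{eq s cr}). Your extra step of first deducing $\wtv_{1,2}=\wtv_{2,2}\circ r(s_{1,2})$ and then inverting via $s_{1,2}^2=1_2$ is harmless, and your caution about the variance of $r(-)$ and the pullback reading of the ladder is exactly the bookkeeping the paper leaves implicit.
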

\begin{proof}
The first equality directly follows from Lemma \ref{lem ext rel}.
Precomposing the first equality with $r(\pa_{1,2})$ gives the second one, since $r(s_{1,2}) \circ r(\pa_{1,2})=r(\pa_{1,2} \cdot s_{1,2})=r(-\pa_{1,2})$, see figure \ref{fig alg5}.
\end{proof}

In the monoidal category $\mf{D}(\nh^e)$, we could diagrammatically represent $\wtv_1 \in \Hom^1(\cf,\mb)$ as a short strand with one endpoint at the top and one endpoint in the middle decorated by a circle, see figure \ref{fig alg13}.
It is of cohomological degree one.
The element $\wtv_{i,n}$ can be obtained from $\wtv_1$ by adding $i-1$ and $n-i$ vertical strands on the left and right, respectively.
The two relations in Corollary \ref{cor ext rel} are depicted in figure \ref{fig alg13}.
They are the analogues of the slide relation, and the exchange relation in $\Hom^0(\cf,\cf^{2})$, see figures \ref{fig alg4} and \ref{fig alg2}.

\begin{figure}[h]
\begin{overpic}
[scale=0.25]{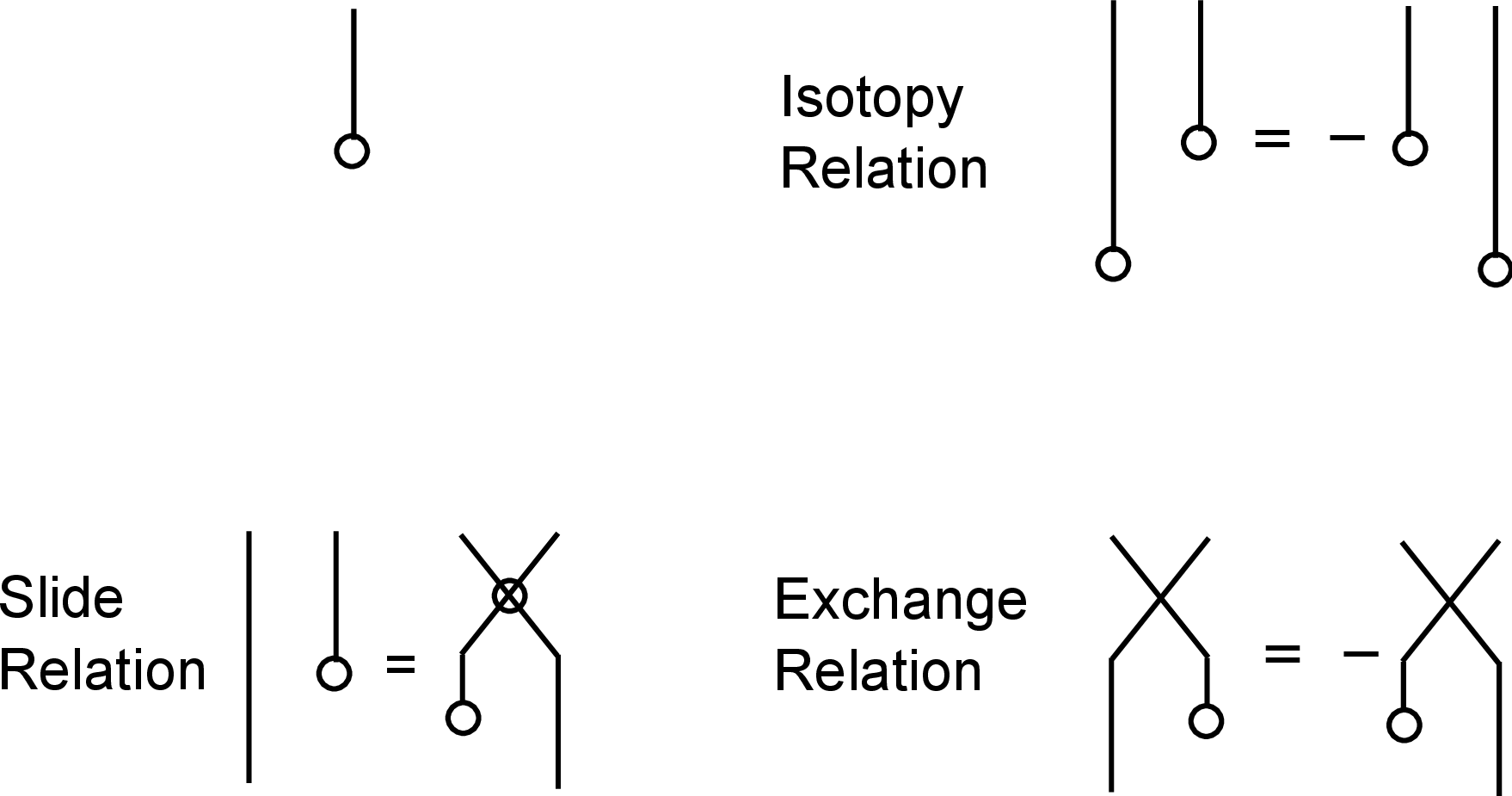}
\put(10,45){$\wtv_1:$}
\end{overpic}
\caption{A graphic presentation of $\wtv_1 \in \Hom^1(\cf,\mb[1])$, and the slide, exchange, and isotopy relations in $\Hom^1(\cf^2,\cf)$.}
\label{fig alg13}
\end{figure}

The super version of the isotopy relation of disjoint diagrams holds in $\mf{D}(\nh^e)$:
$$(a \ot 1_{N'}) \circ (1_{M} \ot b)=(-1)^{|a||b|}(1_{N} \ot b) \circ (a\ot 1_{M'}) \in \Hom^{|a|+|b|}(M\ot M',N \ot N'),$$
for $a \in  \Hom^{|a|}(M,N), b \in  \Hom^{|b|}(M',N')$.
In particular, $\wtv_{2,2} \circ \wtv_1=-\wtv_{1,2} \circ \wtv_1$ from the isotopy of two disjoint short strands of degree one.

As an analogue of $u_k$ defined in (\ref{def u}), define a sum
\begin{gather} \label{def wtu}
\wtu_k = \sum\limits_{i=1}^{k}\wtv_{i,k} \in \Hom^1(\cf^k,\cf^{k-1}),
\end{gather}
for $k \ge 1$.
Then $\wtu_k \circ \wtu_{k+1}=0$ by the super isotopy relation of disjoint short strands.

Define the idempotent
$$\wte_k=\pak x_1^{k-1}\cdots x_{k-1} \in NH_k.$$
It is the flip of $e_k$ with respect to the horizontal axis.
There is an induced idempotent endomorphism $r(\wte_k) \in \Hom^0(\cf^k, \cf^k)$.
Define $\wtfk$ to be the direct summand of $\cf^k$ corresponding to the idempotent endomorphism $r(\wte_k)$.

The morphism $\wtu_k \in \Hom^1(\cf^k,\cf^{k-1})$ induces a morphism in $\Hom^1(\wt{\cf}^{(k)},\wt{\cf}^{(k-1)})$ as the restriction of $\wtu_k$ to $\wt{\cf}^{(k)}$ followed by a projection onto $\wt{\cf}^{(k-1)}$.
Let $d(\wtu_k)$ denote the resulting morphism.

\begin{lemma} \label{lem eue'}
There is an equality $r(\wte_{k-1}) \circ \wtu_k \circ r(\wte_k)=r(\wte_{k-1}) \circ \wtu_k \in \Hom^1(\cf^k,\cf^{k-1})$.
\end{lemma}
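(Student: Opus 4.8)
The plan is to mimic the proof of Lemma \ref{lem eue}, transporting that argument from the ordinary right action $r(-)$ on $\cf^k$ to the situation of $\wtu_k$ and the flipped idempotent $\wte_k$. Recall $\wtfk$ is cut out by $r(\wte_k)$ with $\wte_k=\pak x_1^{k-1}\cdots x_{k-1}$, so $\wte_k = \wte_{i_1,k}\cdots \wte_{i_l,k}$ where $\wte_2 = 1_2 + x_2\pa_1$ and $s_{i_1}\cdots s_{i_l}$ is a reduced word for $w_0(k)$; hence it suffices to prove the analogue of the claim in Lemma \ref{lem eue}, namely that $r(\wte_{k-1})\circ\wtu_k\circ r(\wte_{i,k}) = r(\wte_{k-1})\circ\wtu_k$ for every $1\le i\le k-1$, and then multiply. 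Writing out $\wte_{i,k} = 1_k + x_{i+1,k}\pa_{i,k}$, this reduces to showing $r(\wte_{k-1})\circ\wtu_k\circ r(x_{i+1,k})\circ r(\pa_{i,k}) = 0$, and in fact to showing $r(\wte_{k-1})\circ\wtu_k\circ r(\pa_{i,k}) = 0$.

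To establish the latter, I would expand $\wtu_k\circ r(\pa_{i,k}) = \sum_{j=1}^{k}\wtv_{j,k}\circ r(\pa_{i,k})$ and split the sum into the terms with $j\neq i,i+1$ and the two terms $j=i,i+1$. For $j\neq i,i+1$ the circle-decorated short strand $\wtv_{j,k}$ slides past the crossing $r(\pa_{i,k})$ — this is the isotopy relation for disjoint diagrams in $\Hom^1$ (figure \ref{fig alg13}) — producing a term of the form $r(\pa_{i',k-1})\circ\wtv_{j,k}$ for an appropriate index $i'$, which is killed by precomposition-side $r(\wte_{k-1})$ since $r(\pa_{i',k-1})$ followed by (the flipped) $r(\wte_{k-1})$ vanishes, just as $\pa_{i',k-1}\wte_{k-1}=0$ in $NH_{k-1}$. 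For the remaining two terms $j=i$ and $j=i+1$, I need the $\Hom^1$-analogue of the exchange relation $v_{i,k}\pa_{i,k}=v_{i+1,k}\pa_{i,k}$. Corollary \ref{cor ext rel}, in the form $\wtv_{2,2}\circ r(\pa_{1,2}) = -\wtv_{1,2}\circ r(\pa_{1,2})$ (and its translates $\wtv_{i+1,k}\circ r(\pa_{i,k}) = -\wtv_{i,k}\circ r(\pa_{i,k})$ obtained by padding with identity strands), shows these two terms cancel — note the \emph{sign}: in $\wtu_k$ all coefficients are $+1$ (unlike the $u_k$ case where they alternate), and the cancellation is supplied instead by the minus sign in the exchange relation for the degree-one short strand. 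Hence $\wtu_k\circ r(\pa_{i,k})$ composed on the target side with $r(\wte_{k-1})$ is zero, which gives the claim.

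The main obstacle I anticipate is bookkeeping the direction of composition and the placement of the idempotents: here $\wte_k$ is applied on the \emph{target} side of $\wtu_k$ (since $\wtu_k:\cf^k\to\cf^{k-1}$ and $\wte_k\in NH_k$ acts on the source $\cf^k$, while $\wte_{k-1}$ acts on the target $\cf^{k-1}$), which is mirror-image to Lemma \ref{lem eue}, so the relations $\pa_{i,n}\wte_n = 0$ versus $\wte_n\pa_{i,n}$, and which side of $\wtv$ the crossing slides to, must all be taken in the flipped convention; the element $\wtv_1$ being of cohomological degree one also means every reordering of two short strands or of a short strand past another degree-one map carries a Koszul sign, so I would track these carefully when justifying $\wtu_k\circ\wtu_{k+1}=0$-type manipulations, though in the present lemma only a single $\wtu_k$ appears so the sign issues are confined to the exchange relation above. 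Once the claim $r(\wte_{k-1})\circ\wtu_k\circ r(\wte_{i,k}) = r(\wte_{k-1})\circ\wtu_k$ is in hand, composing over a reduced word for $w_0(k)$ yields $r(\wte_{k-1})\circ\wtu_k\circ r(\wte_k) = r(\wte_{k-1})\circ\wtu_k$, completing the proof.
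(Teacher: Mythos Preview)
Your proposal is correct and follows exactly the approach the paper takes: its proof of Lemma~\ref{lem eue'} is a one-line reference to Lemma~\ref{lem eue}, noting only that the sign in the exchange relation $\wtv_{2,2}\circ r(\pa_{1,2}) = -\wtv_{1,2}\circ r(\pa_{1,2})$ from Corollary~\ref{cor ext rel} replaces the alternating signs that were built into $u_k$. One small bookkeeping point (of the sort you anticipated): since $r$ is an anti-homomorphism (cf.\ the computation $r(s_{1,2})\circ r(\pa_{1,2}) = r(\pa_{1,2}\cdot s_{1,2})$ in the proof of Corollary~\ref{cor ext rel}), the factor $r(x_{i+1,k}\pa_{i,k})$ unpacks as $r(\pa_{i,k})\circ r(x_{i+1,k})$ rather than the order you wrote, and this is exactly what makes your key reduction to $r(\wte_{k-1})\circ\wtu_k\circ r(\pa_{i,k})=0$ go through.
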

\begin{proof}
The proof is similar to that of Lemma \ref{lem eue} except that there is a minus sign in the exchange relation $\wtv_{2,2} \circ r(\pa_{1,2})=-\wtv_{1,2}\circ r(\pa_{1,2})$, as in Corollary \ref{cor ext rel}.
\end{proof}

The lemma above implies that
\begin{align*}
&(r(\wte_{k-1}) \circ \wtu_k \circ r(\wte_k)) \circ (r(\wte_{k}) \circ \wtu_{k+1} \circ r(\wte_{k+1})) \\
=&r(\wte_{k-1}) \circ \wtu_k \circ r(\wte_{k}) \circ \wtu_{k+1} \circ r(\wte_{k+1}) \\
=&r(\wte_{k-1}) \circ \wtu_k  \circ \wtu_{k+1} \circ r(\wte_{k+1})=0,
\end{align*}
where the last equality holds because $\wtu_k \circ \wtu_{k+1}=0$.

Hence, $d(\wtu_{k}) \circ d(\wtu_{k+1}) \in \Hom^2(\wt{\cf}^{(k+1)}, \wt{\cf}^{(k-1)})$ is zero.
We define a complex
\begin{gather} \label{def e^f}
\exp(\cf)=\left(\bigoplus\limits_{k \ge 0} \wtfk, d=\bigoplus\limits_{k \ge 1}d_k\right),
\end{gather}
where the components of the differential $d$ are given by $d_k=d(\wtu_k) \in \Hom^1(\wt{\cf}^{(k)}, \wt{\cf}^{(k-1)})$.

\section{Discussions} \label{Sec dis}
\n {\bf  Non-invertibility of $\exp(\cf)$ and $\exp(-\cf)$.}

Suppose that $\exp(\cf), \exp(-\cf)$ are invertible objects.
They should induce invertible endofunctors via categorical actions.
We will consider certain actions of $\mf{D}(\nh^e)$ or its variants, and show that the induced functors cannot be invertible.

The derived tensor product over $\nh$ induces an action of $\mf{D}(\nh^e)$ on the derived category $\mf{D}(\nh)$ of left $\nh$-modules.
Let $P_{(k)}=\nh \cdot e_k$ denote the left projective $\nh$-module.
There are isomorphisms
$$\cf^{(k)}(P_0) \cong \wt{\cf}^{(k)}(P_0) \cong P_{(k)} \in \mf{D}(\nh).$$

By definition, $\exp(\cf)$ is an iterated extension of $\wt{\cf}^{(k)}$'s.
So $\exp({\cf})(P_0)$ is isomorphic to an iterated extension of $\wt{\cf}^{(k)}(P_0)$'s.
But there is no nontrivial extension between $\wt{\cf}^{(k)}(P_0)$'s since $\wt{\cf}^{(k)}(P_0) \cong P_{(k)}$ are projective for all $k$.
Thus,
$$\exp({\cf})(P_0) \cong \bigoplus\limits_{k=0}^{\infty}P_{(k)}.$$
The induced map: $\End(P_0) \ra \End(\exp({\cf})(P_0))$ is very far from being surjective.
Hence, $\exp({\cf})$ cannot be an invertible endofunctor of $\mf{D}(\nh)$, nor an invertible object in $\mf{D}(\nh^e)$.

In the case of $\exp(-\cf)$, we consider a cyclotomic quotient $\nh(1)$ of $\nh$, where the two-sided ideal under quotient is generated by $x_{1,n}$ for $n>0$.
Idempotents $1_n$ are in the ideal for $n>1$, and $\nh(1)$ has a $\Z$-basis $\{1_0, 1_1, v_1\}$.
There are two projective $\nh(1)$-modules $P_0$ and $P_1$.
Let $\cf(1)$ denote the corresponding quotient of $\cf$ as an $\nh(1)$-bimodule.
It induces an endofunctor of $\mf{D}(\nh(1))$.
Then $\cf(1)(P_0) \cong P_1, \cf(1)(P_1) = 0$, and $\cf(1)^2=0$.
The object $\exp(-\cf(1))$ reduces to a complex $(\mb \ra \cf)$ of two terms.
Then
$$\exp(-\cf(1))(P_0) \cong (P_0 \xrightarrow{\cdot v_1} P_1), \qquad \exp(-\cf(1))(P_1) \cong P_1.$$
So $\Hom(\exp(-\cf(1))(P_0), \exp(-\cf(1))(P_1))=0$, while $\Hom(P_0, P_1) \neq 0$.
Hence, the endofunctor $\exp(-\cf(1))$ is not invertible.

\vspace{.2cm}
\n {\bf Lifting $\sum\limits_{n\ge 0}(-1)^nx^n$ and $\sum\limits_{n\ge 0}x^n$.}

If we remove the nilHecke generators $x_{i,n}, \pa_{i,n}$ from $\nh$, we obtain a simpler diagrammatic algebra $R$ generated by vertical strands $1_n$ and short strands $v_{i,n}$ only.
They satisfy the same relations as in Definition \ref{def nh}.
The construction of the induction bimodule $\cf$, the morphisms $v \in \Hom(\mb,\cf), \wtv \in \Hom^1(\cf,\mb)$ still works in the case of $R$.
The difference is that we do not have the idempotents $e_k, \wte_k$, and the corresponding direct summands $\cfk, \wtfk$ of $\cf^k$.
Like $\exp(\cf), \exp(-\cf)$ using $\cfk, \wtfk$, one can define similar objects in the derived category of $R$-bimodules where $\cfk, \wtfk$ are replaced by $\cf^k$.
The two objects lift $\sum\limits_{n\ge 0}(-1)^nx^n$ and $\sum\limits_{n\ge 0}x^n$, respectively, but these liftings are not invertible functors either.

\end{document}